\documentclass[reqno,11pt]{article}
\usepackage[utf8]{inputenc} 
\usepackage{hyperref}
\hypersetup{                    
    colorlinks=true,                
    breaklinks=true,                
    urlcolor= green,                 
    linkcolor= blue,                
    citecolor= blue                
}
\setlength{\textwidth}{6.3 in}
\hoffset -0.6 in
\abovedisplayskip0.25in
\belowdisplayskip0.25in
\abovedisplayshortskip0.15in
\belowdisplayshortskip0.25in
\usepackage{amsfonts,amsmath,amsthm, url}
\usepackage{amssymb}
\usepackage{graphics}
\usepackage{graphicx}
\usepackage{epsfig,color}
\usepackage{caption} 

\newtheorem{thm}{Theorem}
\newtheorem{claim}{Claim}
\newtheorem{as}{Assumption}
\newtheorem{prop}{Proposition}
\newtheorem{lem}{Lemma}[section]

\newtheorem{rem}{Remark}[section]

\newcommand{\esp}[1]{\mathbb{E}\left[ #1 \right]}
\newcommand{\espc}[2]{\mathbb{E}\left[ #1 \vert #2 \right]}
\newcommand{\prob}[1]{\mathbb{P}\left( #1 \right)}

\newcommand{\F}{\mathcal{F}}

\newcommand{\normL}[2]{\left|#2\right|_{L^{#1}}}

\newcommand{\normSigma}[2]{\left|#2\right|_{\Sigma^{#1}}}
\newcommand{\norm}[2]{\left| #2 \right|_{ #1 }}
\newcommand{\N}{\mathbb{N}}
\newcommand{\R}{\mathbb{R}}

\newcommand{\C}{\mathbb{C}}

\newcommand{\abs}[1]{\left|#1\right|}

\newcommand{\supp}{\operatornamewithlimits{Supp}}

\renewcommand{\epsilon}{\varepsilon}

\usepackage{dsfont}
\newcommand{\indic}[1]{\mathds{1}_{\left\{#1\right\}}}

\usepackage{upgreek}
\usepackage{textgreek}



\makeatletter
  
  \@addtoreset{equation}{section}
\makeatother
\begin{document}
\title{\textbf{Numerical analysis of the Gross-Pitaevskii Equation with a
randomly varying potential in time}}
\date{}
\author{Romain Poncet$^{\scriptstyle 1}$}



\maketitle

\begin{center}
$^1$ \textit{CMAP, Ecole Polytechnique, CNRS, Université Paris-Saclay, 91128 Palaiseau, France};\\
romain.poncet@cmap.polytechnique.fr
\end{center}

\vskip 0.1 in
\noindent
{\small
{\bf Abstract}.
The Gross-Pitaevskii equation with white noise in time perturbations of the
harmonic potential is considered.
In this article we define a Crank-Nicolson scheme based on a spectral discretization
and we show the convergence of this scheme in the case of cubic non-linearity and
when the exact solution is uniquely defined and global in time.
We prove that the strong order of convergence in probability is at least one.
\vskip 0.1 in
\noindent
\textit{keywords}: Gross-Pitaevskii equation; numerical analysis; stochastic partial differential
equations, nonlinear Schr\"{o}dinger equation


\section{Introduction}
The Gross-Pitaevskii equation (GPe) with cubic nonlinearity and quadratic potential is
used to model the evolution of Bose-Einstein Condensate (BEC) macroscopic wave
function in an all-optical far-off-resonance laser trap.
Fluctuations of laser intensity lead to consider white noise in time
perturbations of the harmonic potential (\cite{abdu,PhysRevA.69.053607}).
More precisely, we will be interested in the following dimensionless equation:
\begin{equation}
  \label{eq:GP}
  \left\{
    \begin{aligned}
      &id\phi -\lambda\abs{\phi}^{2\sigma}\phi dt - (-\Delta\phi+|x|^2\phi)dt=|x|^2\phi\circ dW_t,\\
      &\phi(0)=\phi_0,
    \end{aligned}
  \right.
\end{equation}
where the unknown $\phi$ is a random field taking complex values, on a probability space $(\Omega,\F,\mathcal{P})$,
depending on $t\geq 0$ and $x\in\R^d$.
We take $\sigma>0$ and $\lambda=\pm 1$.
Here, $(W_t)_{t\geq 0}$ is a Brownian motion taking real values associated with the filtration $(\F_t)_{t\geq 0}$.
$\circ$ denotes a Stratonovich product.
Although the stochastic integral associated with this product does not verify the martingale property as It\^{o}'s product does,
this choice of model is natural since the noise is actually a \emph{physical noise}
which in the real physical case is not exactly white, but has small correlation length.
A good way to understand this is to refer to \cite{dbf12} where the authors explain that equation \eqref{eq:GP}
can be seen in the subcritical case as a limit of equations where the Stratonovich product is replaced by
a random process with nonzero correlation length.
\vspace{3mm}

Existence and uniqueness of a solution for \eqref{eq:GP} has been proven in \cite{deBouardFukuizumi2007}
by use of a compactness method in the case $d=1,2$ with restrictions on $\sigma$.
The proof has been then generalized in \cite{dbf12} to the case $d\geq 3$ thanks to a dispersive estimate
for the linear equation.
\vspace{3mm}

The use of a Crank-Nicolson scheme is natural since mid-point discretization of the noise is consistent with the
Stratonovich product.
For example such a discretization has been used in
\cite{bdbd13,dbd04,Gazeau14}.
Moreover, as this discretization preserves the $L^2$-norm, it has also been used to discretize the deterministic equation
(see \cite{AntoineBaoBesse2013,BaoJakschMarkowich2003}) and nonlinear Schr\"{o}dinger equations with additive noise
(see \cite{dbd04}).
\vspace{3mm}

One of the main difficulties of this work is to prove stability for the naive Crank-Nicolson scheme in the linear case.
Lack of stability is also encountered in \cite{dbd04}, but actually comes from the
non-linearity without truncation.
Thus our case-study is more complicated.
To estimate the growth of the solution between two time-steps, one can express the norm (in which we expect stability)
of the solution at a given time-step with respect to the solution at the previous time-step.
Such an equation is the discrete analogous to the It\^{o} formula in the continuous case.
However, this method yields some new terms that we do not know how to estimate,
as explained in Remark \ref{rem:stab_lin}.
The solution we chose is to replace the unbounded operator $\abs{x}^2$
by a bounded approximation such that we can control its operator norm
with respect to $\delta t^{-1/4}$.
This approximation is built with the Hermite functions.
\vspace{3mm}

Another difficulty comes from the non-Lipschitz behavior of the nonlinearity.
We cannot ensure existence of a global solution for the discrete scheme by
the classical use of a fixed point method.
To circumvent this difficulty we use a classical argument
(already used in \cite{Antoine20132621,bdbd13,dbd04})
that consists in truncating the nonlinearity to make it Lipschitz,
and then making this truncation go to infinity.
This argument leads to a weaker sense of convergence for the untruncated equation
compared to the truncated equation.
Indeed, we are only able to get an order of convergence in probability instead of
mean-square.
This kind of result is classical for globally non-Lipschitz nonlinearities
\cite{bdbd13,Chen2015,LiuSplit}.
\vspace{3mm}

Besides, split-step methods have been used to approximate SPDEs and SDEs.
One of the main benefit is that they offer an easy and consistent way to approximate
the Stratonovich integral.
In \cite{LiuSplit} the author proposes a splitting scheme to solve the deterministic nonlinear
Schr\"{o}dinger equation and the stochastic nonlinear Schr\"{o}dinger equation with multiplicative
noise of Stratonovich type.
In both cases the nonlinearity is chosen to be non globally Lipschitz.
For the deterministic scheme an order of convergence is given for small enough
integration time interval, and in the stochastic case an order of convergence in
probability is given on a random time interval.
The proof uses a truncation on the nonlinearity.
Moreover, in \cite{DuboscqMartySplit}, the authors propose a Lie time-splitting
scheme for a nonlinear partial differential equation driven by a random
time-dependent dispersion coefficient.
In this case the nonlinearity is supposed to be Lipschitz, but the dispersion
coefficient can approximate a fractional Brownian motion.

We introduce some notations. We define for $k\in\N$ and $p\in\N^*$,
\begin{align}
  \label{eq:Sigmakp}
  \Sigma^{k,p}(\R^d)=\left\{ v\in L^p(\R^d),\displaystyle\sum_{\abs{\alpha}+\abs{\beta}\leq k}\normL{p}{x^\beta\partial^\alpha v}^p=\norm{\Sigma^{k,p}}{v}^p<+\infty \right\}.
\end{align}
We simply denote $\Sigma^{k}$ instead of $\Sigma^{k,2}$, and $\Sigma$ instead of $\Sigma^{1}$.
We denote by $C(\cdot)$ the constants, specifying dependencies with $\cdot$.
These constants can vary from one line to another.
We denote by $\langle \cdot,\cdot \rangle$ the usual Hermitian inner product in $L^2$,
defined for all $u,v\in L^2(\R^d)$ by $\langle u,v \rangle=\int_{\R^d} u(x)\overline{v}(x)dx$.
We denote by $A$ the operator $-\Delta +\abs{x}^2$.
In dimension $d=1$, we recall that this operator has purely discrete eigenvalues $\lambda_k=2k+1$,
with $k\in\N$.
The corresponding eigenfunctions are the Hermite functions, denoted by
$\{e_k,\quad k\in\N\}$, that form a complete orthonormal system in $L^2(\R)$.
In higher dimensions, the tensorization of this basis also form a complete orthonormal
system in $L^2(\R^d)$.
For example with $d=2$, we get the basis $\{e_{k,j}:=(x,y)\mapsto e_k(x)e_j(y),\quad k,j\in\N\}$,
and for $k,j\in\N$, the eigenvalue of $e_{k,j}$ is $2(k+j)+d$.
Moreover the sesquilinear form defined on $\Sigma^k(\R^d)\times\Sigma^k(\R^d)$ by
$(u,v)\mapsto\langle A^k u,v\rangle$ defines a Hermitian inner product on $\Sigma^k$,
and the associated norm is equivalent with the the norm defined in \eqref{eq:Sigmakp}.
For $K\in\N$, we denote by $\Sigma_K(\R)$ the finite dimensional
vector space spanned by the $K$ first Hermite functions.
In the multidimensional case we denote by $\Sigma_K(\R^d)$ the
finite dimensional subspace of $\Sigma(\R^d)$, of dimension $K^d$,
obtained by tensorisation of the space $\Sigma_K(\R)$ (see \cite{reed1975fourier}).
We denote by $P_K$ the orthogonal projection of $L^2(\R^d)$ onto $\Sigma_K(\R^d)$,
and we set $A_K=A P_K$.
\vspace{3mm}

We give now some notations about the the numerical scheme.
Let $T>0$ be the time horizon, and $N\in\N^*$ the number of time discretization steps.
We denote by $\delta t=T/N$ this step.
We also set $t_n:=n\delta t$ for $n\leq N$.
We set for all $n\in\N$, $\chi^{n+1}=\delta t^{-1/2}(W_{(n+1)\delta t}-W_{n\delta t})$.
Thus $(\chi^n)_{n\in\N^*}$ is a sequence of independent standard normal deviates.
We set $(\F_n)_{n\in\N}$ the filtration defined for all $n\in\N$ by $\F_n=\sigma(W_s,s\leq t_n)$,
so that for all $n\in\N^*$ $\chi^n$ is $\F_n$-measurable.
\vspace{3mm}

In section \ref{sec:def_scheme} we define the numerical scheme.
We introduce a truncation of the Brownian motion increments (that will enable us
to control the growth of the solution of the scheme), and of the non-linearity.
We also introduce an approximation of the term $\abs{x}^2$, that will enable us
to prove stability.
In section \ref{sec:well-posedness} we prove existence and uniqueness of a
solution for the numerical scheme. We also prove stability.
The stability relies heavily on the spectral discretization.
We prove then in section \ref{sec:conv} the convergence in probability of our scheme toward the
GP equation, with a strong order at least one.
We present in section \ref{sec:num} numerical simulations for the numerical
scheme, and some convergence comparisons with various other schemes.
\vspace{3mm}

\section{Definition of the numerical scheme and main result}
\label{sec:def_scheme}

This section is devoted to the construction of the numerical scheme and to the
statement of the main convergence result.
The technical precisions are quite cumbersome and interfere with the comprehension
of the general idea.
For the sake of clarity we first present a refined version of the main statements in section
\ref{subsec:objectives}, introducing the main ideas and the technical issues.
We then give a rigorous definition of the scheme that enables us to state the convergence result.

\subsection{Objectives and formal result}
\label{subsec:objectives}

As stated in introduction, equation \eqref{eq:GP} is known to be well-posed
under some assumptions on $\lambda$, $\sigma$ and $d$ (see \cite{deBouardFukuizumi2007,dbf12,Bouard20152793}).
We state a slightly stronger result in the specific case $\lambda=1$, $\sigma=1$ and
$d\leq 3$ for arbitrarily regular initial condition.
All the results of this article hold only in this specific case since we require the following result,
\begin{prop}
  \label{prop:exist_Sk}
  Assume $\lambda=1$, $\sigma=1$ and $d\leq 3$. Then $\forall j\in\N^*$,
  if $\phi_0\in\Sigma^j(\R^d)$ then there exists a unique global solution $\phi$
  of \eqref{eq:GP} adapted to $(\F_t)_{t\geq 0}$, with $\phi(0)=\phi_0$,
  almost surely in $C(\R^+;\Sigma^j(\R^d))$.
\end{prop}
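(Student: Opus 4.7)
The plan is to bootstrap from the already known global well-posedness. The case $j=1$ is exactly the theorem of \cite{deBouardFukuizumi2007,dbf12,Bouard20152793}: under $\lambda=1$, $\sigma=1$, $d\leq 3$, it gives a unique adapted solution $\phi\in C(\R^+;\Sigma(\R^d))$ for every $\phi_0\in\Sigma$. So the task reduces, for $j\geq 2$, to propagating the higher $\Sigma^j$ regularity along this flow.

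I would run the argument on the spectral Galerkin truncation already available in the paper: let $\phi^K$ solve
\begin{equation*}
  id\phi^K = A_K\phi^K\, dt + P_K(|\phi^K|^2\phi^K)\, dt + P_K(|x|^2\phi^K)\circ dW_t,\qquad \phi^K(0)=P_K\phi_0.
\end{equation*}
This is a finite-dimensional Stratonovich SDE on $\Sigma_K(\R^d)$ with locally Lipschitz coefficients; its $L^2$ norm is conserved pathwise (the noise coefficient being anti-Hermitian), so it extends globally. Using the equivalence of $\|\cdot\|_{\Sigma^j}$ with $\|A^{j/2}\cdot\|_{L^2}$, I would convert the equation to It\^o form (picking up the correction $-\tfrac{1}{2}(P_K|x|^2)^2\phi^K\, dt$) and apply the It\^o formula to $\|A^{j/2}\phi^K\|_{L^2}^2$.

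Three groups of terms arise. The dispersive drift $2\,\mathrm{Im}\,\langle A^j\phi^K,A_K\phi^K\rangle$ vanishes by self-adjointness. The nonlinear drift $2\,\mathrm{Im}\,\langle A^j\phi^K,P_K(|\phi^K|^2\phi^K)\rangle$ I would bound using the algebra/Moser-type tame estimate available in $\Sigma^j$ for $d\leq 3$, together with the $\Sigma$-bound coming from the base case, producing a contribution of the form $C(1+\|\phi^K\|_{\Sigma^j}^2)$. The delicate point is the noise contribution: the Stratonovich-It\^o correction $-\mathrm{Re}\,\langle A^j\phi^K,|x|^4\phi^K\rangle$ and the quadratic-variation term $\|A^{j/2}P_K(|x|^2\phi^K)\|_{L^2}^2$ would cancel identically if $A^j$ commuted with $|x|^2$; the actual mismatch is a finite sum of commutator expressions $[A^p,|x|^2]$ with $p\leq j$. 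Iterating the basic identity $[-\Delta,|x|^2]=-4x\cdot\nabla-2d$, each such commutator is of order strictly less than $2j$ in the $\Sigma$ scale and its pairing with $A^{j/2}\phi^K$ is bounded by $C\|\phi^K\|_{\Sigma^j}^2$. Burkholder-Davis-Gundy on the martingale part and a stochastic Gronwall argument (localized by a stopping time on the $\Sigma$-norm if needed) then yield the uniform a.s.\ bound $\sup_{t\leq T}\|\phi^K(t)\|_{\Sigma^j}\leq C(T,\phi_0,\omega)$.

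Passing $K\to\infty$ along a weakly-$*$ convergent subsequence in $L^\infty(0,T;\Sigma^j)$ and identifying the limit with the $\Sigma$-solution of the base case gives $\phi\in L^\infty(0,T;\Sigma^j)$ almost surely; continuity in time in $\Sigma^j$ follows from the Duhamel representation, using that $e^{-itA}$ is an isometry on every $\Sigma^j$. The main obstacle is exactly the commutator bookkeeping on the noise side: one must verify that the near-cancellation between the Stratonovich-It\^o correction and the quadratic variation leaves only subleading commutators in the $\Sigma^j$ energy, because otherwise the $|x|^4$ term is of degree higher than $\|\cdot\|_{\Sigma^j}^2$ can absorb and the Gronwall closure breaks down.
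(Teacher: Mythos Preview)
Your route is genuinely different from the paper's, and it has a concrete gap at the noise step.

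The paper never runs an It\^o energy estimate here. It applies the gauge transform $\phi=e^{-\frac{i}{2}|x|^2(t+W_t)}\psi$, which converts \eqref{eq:GP} into the random (not stochastic) PDE
\[
i\partial_t\psi=-(\nabla-ix(t+W_t))^2\psi+\lambda|\psi|^2\psi,
\]
and then argues pathwise: local existence by a contraction in the Strichartz-type spaces $Y^k_T$ built on the propagator $U^\omega(t,s)$ of \cite{dbf12}, and globalization by an induction $\Sigma^m\to\Sigma^{m,4}\to\Sigma^{m+1}$ using the dispersive estimate plus Gronwall. No commutators with $|x|^2$ enter, because the gauge has removed the multiplicative noise.

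The gap in your argument is exactly the one the paper flags when it constructs $B_K$: for the naive Galerkin noise coefficient $P_K|x|^2P_K$, the key bound
\[
  \bigl|\Re\langle [A^j,P_K|x|^2P_K]\phi,\,P_K|x|^2P_K\phi\rangle\bigr|\leq C(j)\,\normSigma{j}{\phi}^2
\]
does \emph{not} hold uniformly in $K$ (see the remark after Assumption~\ref{as:convergence}). Concretely, in $d=1$ with $\phi=e_K$, the commutator $[A^j,|x|^2]e_K$ carries coefficients of size $\sim K^{j}$ on $e_{K-2}$ and $e_{K+2}$ with opposite signs, and the near-cancellation you invoke comes from pairing against \emph{both} components of $|x|^2 e_K$; the projector $P_K$ kills the $e_{K+2}$ component and leaves a term of order $K^{j+1}$, one power too many compared with $\normSigma{j}{e_K}^2\sim K^j$. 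Hence your Gronwall closure on $\|A^{j/2}\phi^K\|^2$ is not uniform in $K$, and the compactness passage to the limit fails. A secondary imprecision: the tame bound on the cubic drift requires $L^\infty$ control, i.e.\ $\Sigma^2$ rather than $\Sigma$ in $d=3$; this is fixable by starting the induction at $j=2$, as the paper does via \cite{Bouard20152793}, but it should be stated.
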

The case $j=1$ and $\sigma<2$ if $d=3$ or $\sigma<+\infty$ if $d=2$, has been
initially partially treated in \cite{deBouardFukuizumi2007} and generalized in \cite{dbf12}.
Then, the case $j=2$, $d=2$ and $\sigma\geq 1/2$ has been treated in \cite{Bouard20152793}.
The proof is given in Appendix and is done recursively using the idea introduced
in \cite{Bouard20152793}.
This extra regularity is needed in order to show an order of convergence of the numerical scheme
(that will be introduced thereafter) toward the solution of equation \eqref{eq:GP}.
\vspace{3mm}

The naive Crank-Nicolson discretization of Equation \eqref{eq:GP} would be given by,
\begin{align}
  \label{eq:scheme_refine}
  \phi^{n+1}-\phi^n=
  -i\left(\delta t A +\sqrt{\delta t}\chi^{n+1}\abs{x}^2\right)\left(\dfrac{\phi^{n+1}+\phi^n}{2}\right)
  -i\lambda\delta t g(\phi^{n+1},\phi^n),
\end{align}
where $g$ is a classical approximation of the non-linearity \cite{Delfour1981277}
given by
\begin{align}
  \label{eq:g}
  g(\phi^{n+1},\phi^n)=
  \dfrac{1}{\sigma +1}
  \left(\dfrac{\abs{\phi^{n+1}}^{2\sigma+2}-\abs{\phi^{n}}^{2\sigma+2}}{\abs{\phi^{n+1}}^{2}-\abs{\phi^{n}}^{2}}\right)
  \left(\dfrac{\phi^{n+1}+\phi^n}{2}\right),
\end{align}
which can be simplified (to get rid of the singularity) in our case $\sigma=1$ by
\begin{align*}
  g(\phi^{n+1},\phi^n)=
  \dfrac{1}{2}\left(\abs{\phi^n}+\abs{\phi^{n+1}} \right)\left(\dfrac{\phi^{n+1}+\phi^n}{2}\right).
\end{align*}

It can be formally written in the following form
\begin{align}
  \label{eq:scheme_refine2}
  \phi^{n+1}=S_{\delta t,n+1}\phi^n-i\lambda\delta t T^{-1}_{\delta t,n+1}g(\phi^{n+1},\phi^n),
\end{align}
where
\begin{align*}
  T_{\delta t,n+1}&=\left(Id+i\dfrac{\delta t}{2}A+i\dfrac{\sqrt{\delta t}}{2}\chi^{n+1}\abs{x}^2\right),\\
  S_{\delta t,n+1}&=\left(Id+i\dfrac{\delta t}{2}A+i\dfrac{\sqrt{\delta t}}{2}\chi^{n+1}\abs{x}^2\right)^{-1}
  \left(Id-i\dfrac{\delta t}{2}A-i\dfrac{\sqrt{\delta t}}{2}\chi^{n+1}\abs{x}^2\right).
\end{align*}
\vspace{3mm}

Let us suppose that this scheme is well-defined in the following sense,
\begin{claim}
  \label{claim1}
  There exists a unique discrete solution $\mathcal{F}_n$-adapted $\phi=(\phi^n)_{n=0,\ldots,N}$
  satisfying \eqref{eq:scheme_refine} which belongs a.s. to $L^\infty(0,T;\Sigma^j)$.
\end{claim}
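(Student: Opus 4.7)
The plan is to construct $(\phi^n)_{n=0,\ldots,N}$ inductively, solving at each step the implicit equation \eqref{eq:scheme_refine2} for $\phi^{n+1}$ in terms of $\phi^n$ by a Banach fixed-point argument in $\Sigma^j$. Given $\phi^n \in \Sigma^j$, I would introduce the map $\mathcal{T}\colon u \mapsto S_{\delta t, n+1}\phi^n - i\lambda \delta t\, T_{\delta t, n+1}^{-1} g(u,\phi^n)$ and seek its fixed point on a suitable closed ball. The first preparatory step is the invertibility of $T_{\delta t, n+1}$: writing $T_{\delta t, n+1} = Id + iB^{n+1}$ with $B^{n+1} := \tfrac{\delta t}{2}A + \tfrac{\sqrt{\delta t}}{2}\chi^{n+1}\abs{x}^2$ symmetric on $\Sigma^2$, the identity $\normL{2}{T_{\delta t,n+1}u}^2 = \normL{2}{u}^2 + \normL{2}{B^{n+1}u}^2$ gives an $L^2$-bounded inverse with operator norm at most $1$ and shows that $S_{\delta t, n+1}$ is an $L^2$-isometry. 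Transferring these bounds to $\Sigma^j$ relies on commutator estimates between $T_{\delta t,n+1}$ and the derivations and multiplications entering the definition of $\normSigma{j}{\cdot}$; since $[A,\abs{x}^2] = -4x\cdot\nabla - 2d$ is only relatively bounded, one proceeds by induction on the total order $\abs{\alpha}+\abs{\beta}\le j$.

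Once $T_{\delta t, n+1}^{-1}$ is controlled on $\Sigma^j$, I would prove that $\mathcal{T}$ is a strict contraction on the ball of $\Sigma^j$ of radius $R = C(1 + \normSigma{j}{\phi^n})$. In dimension $d\le 3$, Sobolev embeddings and Moser-type tame estimates for the cubic expression $g$ yield a bound of the form
\begin{equation*}
\normSigma{j}{g(u_1,\phi^n) - g(u_2,\phi^n)} \le C\bigl(\normSigma{j}{u_1}^2 + \normSigma{j}{u_2}^2 + \normSigma{j}{\phi^n}^2\bigr)\normSigma{j}{u_1 - u_2},
\end{equation*}
so that $\mathcal{T}$ is a strict contraction as soon as $\delta t$ is sufficiently small compared to $R^{-2}$. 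This yields existence and uniqueness of $\phi^{n+1}$ in the ball, and $\F_{n+1}$-adaptedness follows from the measurability of the fixed-point construction with respect to $\chi^{n+1}$ and $\phi^n$.

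The main obstacle is that the smallness threshold on $\delta t$ depends on $R$, hence on $\normSigma{j}{\phi^n}$, while in the claim $\delta t = T/N$ is prescribed a priori. Without an independent discrete stability estimate on $\sup_n \normSigma{j}{\phi^n}$, the induction can break down even on a fixed grid. Producing such a bound would require a discrete analog of the It\^o formula applied to $\normSigma{j}{\phi^n}^2$, but the residual terms that arise there are exactly those one does not know how to estimate (cf. Remark \ref{rem:stab_lin}); this is precisely what motivates replacing $\abs{x}^2$ by a bounded Hermite approximation $A_K$ and truncating the Brownian increments in the rigorous scheme introduced in the next subsection. Accordingly, a complete proof of the claim as stated is not really available for the naive scheme \eqref{eq:scheme_refine}, and its function in the exposition is to delineate the obstruction that the subsequent modified scheme is designed to overcome.
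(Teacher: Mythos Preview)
Your reading is correct and matches the paper exactly: Claim~\ref{claim1} is not proved in the paper. It is stated as a formal objective in Section~\ref{subsec:objectives}, and the authors say explicitly that they ``did not manage to prove these two claims for the scheme given by \eqref{eq:scheme_refine2}.'' The rigorous analogue is Proposition~\ref{prop:exist_scheme}, which concerns the \emph{modified} scheme \eqref{eq:scheme} with the bounded approximation $B_K$ of $\abs{x}^2$, the truncated Brownian increments via $\tau(\delta t,C_0)$, and the Lipschitz-truncated nonlinearity $g_L^k$.

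Your diagnosis of the obstruction is also the right one. One small comparison with the paper's actual argument for the modified scheme: rather than a step-by-step fixed point whose smallness condition depends on $\normSigma{j}{\phi^n}$, the paper first replaces $g$ by the globally Lipschitz $g_L^k$ (Lemma~\ref{lem:lips}), so that the contraction constant in \eqref{eq:T_contraction_disc} depends only on $L$ and not on the size of the data. The fixed point is then run in $L^2_\omega L^\infty(0,\Delta;\Sigma^j)$ over a time window $\Delta$ chosen independently of $\phi_0$, and iterated over $[0,T]$. The uniform bound on the iterates comes from the linear stability Lemma~\ref{lem:stability_lin}, which is precisely where the replacement of $\abs{x}^2$ by $B_K$ (and the constraint $K\lesssim\delta t^{-1/4}$) is essential. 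This confirms your conclusion that a complete proof for the naive scheme \eqref{eq:scheme_refine} is not available, and that the claim serves to motivate the modifications of Section~\ref{subsec:rig_def}.
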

Then we want to establish a convergence result in probability for this discrete solution
towards the solution of equation \eqref{eq:GP}:
\begin{claim}
  \label{claim2}
  For all $k\in\N^*$, $\phi_0\in\Sigma^{k+12}$ and $\alpha<1$,
  there exists $C>0$ such that
    \begin{align*}
    \displaystyle\lim_{\delta t\rightarrow 0}\sup_{n\delta t\leq T}\prob{\normSigma{k}{\phi^n-\phi(t_n)}> C\delta t^\alpha}=0,
  \end{align*}
  where $(\phi(t))_{t\in[0,T]}$ denotes the solution of \eqref{eq:GP} given by
  Proposition \ref{prop:exist_Sk}.
\end{claim}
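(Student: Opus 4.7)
The plan is to combine three ingredients: a probabilistic localization onto a high-probability event on which the various truncations built into the scheme (of the nonlinearity, of the Brownian increments, and of the potential $\abs{x}^2$ by its Hermite-based bounded approximation) are either inactive or contribute only higher-order errors; a consistency analysis based on a Taylor expansion of the exact solution in its It\^{o} form; and the stability of the scheme proved in section~\ref{sec:well-posedness}. The extra regularity $\phi_0\in\Sigma^{k+12}$ is used through Proposition~\ref{prop:exist_Sk} applied with $j=k+12$ to ensure that $\phi(t)$ is smooth enough in the $\Sigma^{k+12}$-scale for the Taylor remainders to be controlled in the $\Sigma^k$-norm.

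For the localization, I introduce, for $R>0$, the event $\Omega_R=\{\sup_{t\le T}\normSigma{k+12}{\phi(t)}\le R\}$ intersected with the event that the Brownian-increment truncation is never triggered on $[0,T]$. Markov's inequality together with Proposition~\ref{prop:exist_Sk} yields $\prob{\Omega_R^c}\to 0$ as $R\to\infty$ uniformly in $\delta t$, and a Gaussian tail estimate controls the truncation event for the chosen threshold. On $\Omega_R$, for $\delta t$ small enough, the nonlinearity truncation is inactive because the scheme's $\Sigma^k$-norm stays bounded by stability, while the Hermite approximation of $\abs{x}^2$ contributes only spectrally small errors on vectors of bounded $\Sigma^{k+12}$-norm.

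For the consistency analysis, I rewrite \eqref{eq:GP} in It\^{o} form,
\begin{align*}
  d\phi = \Bigl(-iA\phi - i\lambda\abs{\phi}^2\phi - \tfrac{1}{2}\abs{x}^4\phi\Bigr)\, dt - i\abs{x}^2\phi\, dW_t,
\end{align*}
and iterate to Taylor-expand $\phi(t_{n+1})$ around $\phi(t_n)$ to the required order, and likewise expand $g(\phi(t_{n+1}),\phi(t_n))$. Subtracting one step of the scheme applied to $\phi(t_n)$ identifies a local truncation error $\rho^{n+1}$. The mid-point structure of the Crank--Nicolson discretization reproduces the Stratonovich-to-It\^{o} correction $-\tfrac{1}{2}\abs{x}^4\phi\,\delta t$ exactly, so $\rho^{n+1}$ splits into a conditionally centered, martingale-like increment of size $\mathcal{O}(\delta t^{3/2})$ and a deterministic remainder of size $\mathcal{O}(\delta t^{2})$, both bounded in $\normSigma{k}{\cdot}$ by a polynomial of $\normSigma{k+12}{\phi(t)}$ and hence uniformly on $\Omega_R$.

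Setting $e^n:=\phi^n-\phi(t_n)$, subtracting the two recursions and inverting $T_{\delta t,n+1}$ yield an error equation of the form $e^{n+1}=S_{\delta t,n+1}e^n+\delta t\,L_{n+1}(e^n,\phi(t_n))+\rho^{n+1}$, in which $L_{n+1}$ is uniformly Lipschitz in $e^n$ on $\Omega_R$ with constant depending only on $R$. Combining the stability of $S_{\delta t,n+1}$ from section~\ref{sec:well-posedness} with a discrete Gronwall inequality, applying Doob's maximal inequality to the martingale part of $\sum_m\rho^m$, and using Markov's inequality, I obtain $\prob{\{\sup_{n\delta t\le T}\normSigma{k}{e^n}>C\delta t^{\alpha}\}\cap\Omega_R}\to 0$ for every $\alpha<1$. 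Sending first $\delta t\to 0$ at fixed $R$ and then $R\to\infty$ concludes. The main obstacle will be the consistency step: tracking all corrections generated by iterating the It\^{o} form and checking that the mid-point scheme absorbs them without spurious $\mathcal{O}(\delta t)$ terms; the loss of twelve derivatives reflects the repeated composition of $A$ (of order $2$ in the $\Sigma^j$-scale) with multiplication by $\abs{x}^2$ appearing in the expansion, while the strict inequality $\alpha<1$ absorbs logarithmic factors from the Brownian-truncation probability and from the Gronwall constants.
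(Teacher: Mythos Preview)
Your overall architecture (localize on a high-probability event, prove local consistency of order $3/2$ plus a centered part, propagate via Gronwall, then let the localization parameter go to infinity) is close in spirit to what the paper does, but there is a genuine gap in the error-propagation step. The recursion $e^{n+1}=S_{\delta t,n+1}e^n+\delta t\,L_{n+1}(e^n,\phi(t_n))+\rho^{n+1}$ cannot be iterated with the stability results of section~\ref{sec:well-posedness}: Lemma~\ref{lem:C_0} only gives $\normSigma{k}{S_{\delta t,n+1}\cdot}^2\le 2\normSigma{k}{\cdot}^2$ pathwise, which after $N\sim\delta t^{-1}$ steps blows up as $2^N$, while Lemma~\ref{lem:stability_lin} is an $L^2_\omega$ estimate for the iterated linear scheme and does not provide a one-step contraction of the form $1+C\delta t$ that a Gronwall argument on the error would need. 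This is precisely why the paper's mean-square analysis (Proposition~\ref{prop:conv_ms}) never writes a recursion for $e^{n+1}$ in terms of $S_{\delta t,n+1}e^n$: it works instead with the energy identity $\normSigma{k}{e^{n+1}}^2-\normSigma{k}{e^n}^2=2\Re\langle e^{n+1}-e^n,A^ke^{n+1/2}\rangle$, so that the linear term $-i\delta tA_Ke^{n+1/2}$ drops out and the noise term $-i\sqrt{\delta t}\chi^{n+1}B_Ke^{n+1/2}$ becomes the commutator $[A^k,B_K]$, which is then split into centered and $O(\delta t)$ pieces (the decomposition $I+II+III$ around \eqref{eq:estim_a21_1}). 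Without this energy structure your Gronwall step does not close.

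There is also a circularity in your localization: the stability bound of Proposition~\ref{prop:exist_scheme} on $\esp{\sup_n\normSigma{j}{\phi^n}^2}$ depends on $L$, so it cannot be invoked to conclude that the $L$-truncation is inactive on the scheme. The paper avoids this by never removing the truncation on the discrete side. It inserts two continuous intermediate processes, the projected truncated solution $\phi_{K(\delta t),L}$ of \eqref{eq:GP_modif} and the truncated solution $\phi_L$ of \eqref{eq:GP_trunc}, and proves mean-square order one between the scheme and $\phi_{K(\delta t),L}$ (Proposition~\ref{prop:conv_ms}) and between $\phi_{K(\delta t),L}$ and $\phi_L$ (Proposition~\ref{prop:conv_ms2}), both for \emph{fixed} $L$; only at the end does it use that $\phi_L=\phi$ on $\{\sup_{t\le T}\normSigma{k}{\phi(t)}\le L\}$, together with Markov's inequality and Lemma~\ref{lem:tau_conv}, to send $L(\delta t)\to\infty$. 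Your event $\Omega_R$ corresponds to this last step, but you are missing the intermediate mean-square layer at fixed $L$ that makes the argument close.
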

The respective rigorous analogous of these claims are Proposition \ref{prop:exist_scheme}
and Theorem \ref{thm:conv_proba}.
If we aim to show a convergence in probability, and not a stronger convergence,
as mean-square for example, it is because of the non-Lipschitz nonlinearity.
To show this result, we actually prove an intermediary result of mean-square convergence
when the nonlinearity is replaced by a Lipschitz truncation.
We lose this strong convergence by making the level of truncation go to infinity.
Similar methods have been used in \cite{bdbd13,Chen2015,dbd04}.
\vspace{3mm}

We did not manage to prove these two claims for the scheme given by \eqref{eq:scheme_refine2}.
The main issues are linked to the invertibility of the operator $T_{\delta t,n+1}$,
and to the stability of the linear equation.
The goal of the section \ref{subsec:rig_def} is to slightly modify operators
$T_{\delta t,n+1}$ and $S_{\delta t,n+1}$ so that we are able to prove
Claims \ref{claim1} and \ref{claim2}.
More precisely these modifications should take into account that for strictly
positive values of $j$, invertibility of operators $T_{\delta t,n+1}$
when $\chi^{n+1}\delta t$ is very negative is not clear.
Thereafter we propose to circumvent this difficulty by truncating
the increments of the Brownian motion.
Nevertheless this modification is not sufficient to enable us to prove Claim \ref{claim2}.
In order to be able to prove it, we introduce thereafter an approximating sequence
$(B_K)_{K\in\N^*}$ of the operator $\abs{x}^2$, composed by bounded operators
in every $\Sigma^l$ for $l\in\N$.
By replacing $\abs{x}^2$ by $B_K$, we are then able to prove the stability.
The next subsection defines rigorously such modifications on operators $T_{\delta t,n+1}$
and $S_{\delta t,n+1}$ that enable us to show a convergence in probability toward
Equation \eqref{eq:GP}.

\subsection{Rigorous definition of the scheme}
\label{subsec:rig_def}

In all the following $k\in\N$ refers to the space $\Sigma^k(\R^d)$ in which
convergence of the numerical scheme is expected, and $j\in\N$, refers to the space
$\Sigma^j(\R^d)$ in which stability is expected.
To be able to prove an order of convergence, we require $j$ to be greater than $k$.
More precisely, our convergence theorem will require $k>d/2$ and $j=k+12$.
\vspace{3mm}

We first introduce some objects.
For all $K\in\N^*$, we set $B_K$, a self-adjoint operator in $L^2$ that we will specify later.
It is essentially an approximation of the operator $\abs{x}^2$ which is a bounded
operator in every $\Sigma^j$ for $j\in\N$.
We furthermore suppose that for all $K\in\N^*$, $B_K$ takes its values into $\Sigma_K$,
for making the scheme implementable.

We set now $\theta$ and $\theta_j$, regular truncations of the Heaviside function
such that $\theta\in\mathcal{C}^\infty(\R^+),\theta\geq 0$, $\supp{\theta}\subset[0,2]$
and $\theta\equiv 1$ on $[0,1]$.
And we set for all $L\in\N^*$, for all $x\in\R^+$, $\theta_L(x)=\theta(x/L)$.
We introduce $f_L$ and $g_L$, two Lipschitz approximations of the non-linearity defined by,
\begin{align}
  \label{eq:f_L}
  &\forall \phi\in \Sigma^k,\quad g^k_L(\phi_1,\phi_2)=
  \theta_L(\normSigma{k}{\phi_1}^2)
  \theta_L(\normSigma{k}{\phi_2}^2)
  \dfrac{1}{2}\left(\abs{\phi_1}^2+\abs{\phi_2}^2\right)
  \left(\dfrac{\phi_1+\phi_2}{2}\right),\\
  \label{eq:f_L2}
  &\forall \phi\in \Sigma^k,\quad f^k_L(\phi)=g^k_L(\phi,\phi).
\end{align}

For all $C_0>0$, we introduce the $\F_n$-measurable stopping time $\tau_{\delta t,C_0}$ defined by,
\begin{equation}
  \label{eq:tau_lin}
   \tau(\delta t,C_0)=\min\left(
   \left\{n\in\N,1\leq n\leq N,\abs{W_{t_n}-W_{t_{n-1}}} \geq C_0\right\}\cup\left\{N+1\right\}
   \right).
\end{equation}
We introduce this stopping time for technical purposes.
It prevents the most shaken up trajectories of the Brownian motion
to downgrade the regularity of the discrete solution.
In the regime $\delta t$ small, $\tau(\delta t,C_0)=N+1$ with high probability,
as stated by the following Lemma.
It shows that the case $\tau(\delta t,C_0)<N+1$ happens with small
probability when $\delta t$ vanishes and thus corresponds to
pathological cases.
\begin{lem}
  \label{lem:tau_conv}
  For all $C_0>0$,
  \begin{align*}
    \prob{\tau(\delta t,C_0)<N(\delta t)+1} = O(N e^{-\frac{C_0^2 N}{2T^2}}).
  \end{align*}
\end{lem}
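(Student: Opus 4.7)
The plan is to reduce this to a union bound together with the standard Gaussian tail estimate for each single Brownian increment. The key observation is that the event $\{\tau(\delta t, C_0) < N+1\}$ is, by the very definition of $\tau$, exactly the event that at least one of the $N$ increments $W_{t_n} - W_{t_{n-1}}$ has absolute value greater than or equal to $C_0$. Writing this as a union and applying the union bound gives
\begin{equation*}
\prob{\tau(\delta t, C_0) < N+1} \;\leq\; \sum_{n=1}^{N} \prob{\abs{W_{t_n} - W_{t_{n-1}}} \geq C_0}.
\end{equation*}

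For each $n$, the increment $W_{t_n} - W_{t_{n-1}}$ is centered Gaussian with variance $\delta t = T/N$, so $W_{t_n} - W_{t_{n-1}} = \sqrt{\delta t}\,\chi^n$ with $\chi^n \sim \mathcal{N}(0,1)$. The classical Gaussian tail bound $\prob{\abs{Z} \geq a} \leq 2 e^{-a^2/2}$ for $Z \sim \mathcal{N}(0,1)$ then yields
\begin{equation*}
\prob{\abs{W_{t_n} - W_{t_{n-1}}} \geq C_0} = \prob{\abs{\chi^n} \geq C_0/\sqrt{\delta t}} \;\leq\; 2 \exp\!\left(-\frac{C_0^2}{2\delta t}\right).
\end{equation*}

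Plugging $\delta t = T/N$ into the exponent and summing over $n$ gives a bound of the form $2N \exp(-C_0^2 N/(2T))$, which is the asserted $O(N e^{-c N})$ rate (with some constant $c>0$ depending on $C_0$ and $T$). Nothing in this argument is delicate: the proof is essentially one line of union bound and one line of Gaussian tail. The only subtle point is the verification that the chosen threshold for the increments indeed yields an exponent linear in $N$, which is what makes the estimate useful later on (the bound vanishes faster than any polynomial in $\delta t = T/N$ as $\delta t \to 0$). There is no genuine obstacle here.
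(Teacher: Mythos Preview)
Your proof is correct and follows exactly the same approach as the paper: a union bound over the $N$ increments followed by the standard Gaussian tail estimate $\prob{\abs{G}\geq x}\leq 2e^{-x^2/2}$. In fact your computation gives the exponent $-C_0^2 N/(2T)$, which is precisely what the paper's own argument yields; the $T^2$ in the stated bound appears to be a harmless typo, and your remark that the result is $O(Ne^{-cN})$ for some $c=c(C_0,T)>0$ is all that is used downstream.
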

\begin{proof}[Proof of Lemma \ref{lem:tau_conv}]
  The domination comes from the fact that
  \begin{align*}
    \prob{\tau(\delta t,C_0)<N(\delta t)+1}
    &\leq \sum_{1\leq n\leq N} \prob{\abs{W_{t_n}-W_{t_{n-1}}}\geq C_0}\\
    &\leq N \prob{\abs{G}\geq \dfrac{C_0\sqrt{N}}{\sqrt{T}}},
  \end{align*}
  where $G$ denotes a standard normal deviate.
  The result comes from the fact that for all $x>0$, $\prob{\abs{G}\geq x}\leq 2 e^{-x^2/2}$.
\end{proof}

We define the random operators $T^{-1}_{\delta t,K,n+1,C_0}$ and $S_{\delta t,K,n+1,C_0}$
on $\Sigma_K(\R^d)$ for $n\leq N$ by,
\begin{equation}
  \label{eq:T}
  T^{-1}_{\delta t,K,n+1,C_0}=
  \left\{
  \begin{aligned}
    &\left(Id+i\frac{\delta t}{2} A_K+i\frac{\sqrt{\delta t}}{2}\chi^{n+1}B_K\right)^{-1},
    \quad\text{if}\quad n+1<\tau_{C_0,\delta t}\\
    &0,
    \quad\text{otherwise}
  \end{aligned}
  \right.
\end{equation}
\begin{equation}
  \label{eq:S}
  S_{\delta t,K,n+1,C_0}=
  \left\{
  \begin{aligned}
    &T_{\delta t,K,n+1,C_0}^{-1}
    \left(Id-i\frac{\delta t}{2} A_K-i\frac{\sqrt{\delta t}}{2}\chi^{n+1}B_K\right),
    \quad\text{if}\quad n+1<\tau_{C_0,\delta t},\\
    &Id,
    \quad\text{otherwise}.
  \end{aligned}
  \right.
\end{equation}
Invertibility of the operator $\left(Id+i\frac{\delta t}{2} A_K+i\frac{\sqrt{\delta t}}{2}\chi^{n+1}B_K\right)$
follows from the fact that it will be chosen to be symmetric for the Hermitian
inner product in $L^2$ (see Assumption \ref{as:1}),
and that $\Sigma_K(\R^d)$ is finite-dimensional.

We present now our Crank-Nicolson scheme, with a spectral discretisation in space.
More precisely our scheme is defined on the space spanned by the tensorisation
of the $K$ first Hermite functions.
We set the family of stochastic processes $(\phi^n_{K,L,\delta t})$,
indexed by $K\in\N^*$, $L\in\N^*$ and all possible values of $\delta t$, defined by,
\begin{equation}
  \label{eq:scheme}
  \left\{
  \begin{aligned}
    \phi^{n+1}_{K,L,\delta t}&=S_{\delta t,K,n+1,C_0}\phi^n_{K,L,\delta t}-i\delta t\lambda T_{\delta t,K,n+1,C_0}^{-1}P_K g^k_L(\phi^n_{K,L,\delta t},\phi^{n+1}_{K,L,\delta t}),\\
    \phi^0_{K,L,\delta t}&=P_K \phi_0.
  \end{aligned}
  \right.
\end{equation}
Existence and uniqueness of a solution for \eqref{eq:scheme} is not obvious since this
equation is non-linearly implicit.
We show in the next section that it is actually well-posed.
The approximation of the non-linearity is classical, see \cite{bdbd13,dbd04,Delfour1981277},
and comes from the fact that it is conservative for the energy in the deterministic case
(without truncation).
\vspace{3mm}

We specify now how to choose the family of operators $(B_K)_{K\in\N^*}$.
We require it to satisfy the following assumptions where we denote
by $|||\cdot|||_{\mathcal{L}(\Sigma^m,\Sigma^n)}$ the operator norm for
the linear applications from $\Sigma^m$ to $\Sigma^n$.
\begin{as}
  \label{as:1}
   $(B_K)_{K\in\N^*}$ is such that for all $k\in\N$, there exists $C(k)>0$ such that,
  \begin{align}
    &B_K=B_K^*\label{as:eq:0},\quad\forall K\in\N^*,\\
    &|||B_K|||_{\mathcal{L}(\Sigma^{k+2},\Sigma^k)}\leq C(k)\label{as:eq:1},\quad\forall K\in\N^*,\\
    &|||B_K|||_{\mathcal{L}(\Sigma^k,\Sigma^k)}\leq C(k)K\label{as:eq:2},\quad\forall K\in\N^*,\\
    &|||[A^k,B_K]|||_{\mathcal{L}(\Sigma^{2k},L^2)} \leq C(k)\label{as:eq:3},\quad\forall K\in\N^*,\\
    &\forall\phi\in\Sigma^k,\quad \abs{\Re\langle [A^k,B_K]\phi,B_K\phi \rangle} \leq C(k)\normSigma{k}{\phi}^2\label{as:eq:4},\quad\forall K\in\N^*.
  \end{align}
\end{as}
\begin{as}
  \label{as:convergence}
  We suppose that there exists $C>0$ such that for all $K\in\N$, for all $k\in\N$,
  for all $p\in\N$,
  and for all $\phi\in\Sigma^{k+p+2}(\R^d)$,
  \begin{align}
    \label{as:eq:6}
    \normSigma{k}{(\abs{x}^2-B_K)\phi}^2\leq C K^{-p}\normSigma{k+p+2}{\phi}^2.
  \end{align}
\end{as}
Assumption \ref{as:1} is required for proving the stability of the numerical scheme.
Assumption \ref{as:convergence} specifies the convergence of the sequence $B_K$ toward
the operator $\abs{x}^2$.
Properties \eqref{as:eq:0}, \eqref{as:eq:1}, \eqref{as:eq:3} and \eqref{as:eq:4}
are satisfied by the operator $\abs{x}^2$, and are natural to impose to the sequence $(B_K)$,
for technical purposes.
The essential difference between $(B_K)$ and $\abs{x}^2$ stated in \eqref{as:eq:2}
is that the $B_K$ are not unbounded operators from $\Sigma^k$ to $\Sigma^k$,
contrary to $\abs{x}^2$.
This is why we expect $|||B_K|||_{\mathcal{L}(\Sigma^k,\Sigma^k)}$ to tend to $+\infty$
by requiring $B_K$ to tend to $\abs{x}^2$ when $K$ increases.
Moreover \eqref{as:eq:2} actually controls the speed at which this operator norm diverges.
This speed comes from the fact that there exists $C>0$ such that for all $k,p\in\N$
and for all $\phi\in\Sigma^{k+p}(\R^d)$ and for $K\in\N^*$,
\begin{align}
  \label{eq:estim_trunc}
  \normSigma{k+p}{P_K \phi}^2\leq C K^p \normSigma{k}{\phi}^2.
\end{align}
Assumption \ref{as:convergence} is natural since there exists $C>0$, such that
for all $k,p\in\N$, for all $K\in\N$ and for all $\phi\in\Sigma^{k+p}(\R^d)$,
\begin{align}
  \label{eq:diff_trunc}
  \normSigma{k}{(Id-P_K)\phi}^2\leq C K^{-p}\normSigma{k+p}{\phi}^2,
\end{align}
and for all $\phi\in\Sigma^{k+p+2}(\R^d)$,
\begin{align}
  \label{eq:diff_truncx2}
  \normSigma{k}{(Id-P_K)\abs{x}^2\phi}^2\leq C K^{-p}\normSigma{k+p+2}{\phi}^2.
\end{align}
Equations \eqref{eq:diff_trunc} and \eqref{eq:diff_truncx2} follows from
a direct computation based on the expansion of $\phi$ in eigenvectors of $A$.
\vspace{3mm}

To satisfy hypotheses \eqref{as:eq:2} and \eqref{as:eq:6}, one could think of choosing
$B_K=P_K\abs{x}^2P_K$, but then \eqref{as:eq:4} would not hold anymore.
To satisfy all of them, we can define $B_K$ as a smooth (actually Lipschitz)
spectral cutoff of $\abs{x}^2$, instead of a cutoff such as
$P_K \abs{x}^2P_K$.
We give now an example of such a family of operators.
We begin with constructing it in dimension one, and we then generalize to higher dimensions.
We fix $\theta\in]0,1[$, the parameter that specifies the smoothness of the cutoff,
and we set,
\begin{align}
  \label{eq:def_BK_1}
  \forall m\in\N,\quad B_K e_m=\alpha_m^K e_{m-2}+\beta_m^K e_m+\gamma_m^K e_{m+2},
\end{align}
with
\begin{equation}
  \label{eq:def_BK_2}
  \forall m\in\N,\forall K\in\N^*,\quad \alpha_{m}^K=
  \left\{
  \begin{aligned}
    &\dfrac{1}{2}\sqrt{(m-1)m}=\langle\abs{x}^2,e_{m-2}\rangle,\quad\text{if}\quad 2\leq m\leq \theta K\\
    &\alpha^K_{\lfloor\theta K\rfloor}\left(1-\left(\dfrac{m-\lfloor\theta K\rfloor}{K-\lfloor\theta K\rfloor}\right)\right),\quad\text{if}\quad \theta K<m\leq K\\
    &0,\quad\text{otherwise}
\end{aligned}
  \right.
\end{equation}
\begin{equation}
  \label{eq:def_BK_3}
  \forall m\in\N,\forall K\in\N^*,\quad \beta_{m}^K=
  \left\{
  \begin{aligned}
    &\dfrac{1}{2}(2m+1)=\langle\abs{x}^2,e_{m}\rangle,\quad\text{if}\quad 0\leq m\leq \theta K\\
    &\beta^K_{\lfloor\theta K\rfloor}\left(1-\left(\dfrac{m-\lfloor\theta K\rfloor}{K-\lfloor\theta K\rfloor}\right)\right),\quad\text{if}\quad \theta K<m\leq K\\
    &0,\quad\text{otherwise}\quad K<m
  \end{aligned}
  \right.
\end{equation}
\begin{equation}
  \label{eq:def_BK_4}
  \forall m\in\N,\forall K\in\N^*,\quad \gamma_{m}^K=\alpha_{m+2}^K.
\end{equation}
We recall that in dimension one, the operator $\abs{x}^2$ verifies,
\begin{align*}
\forall m\in\N,\quad \abs{x}^2 e_m=\dfrac{1}{2}\left( \sqrt{(m-1)m}e_{m-2}+(2m+1)e_m+\sqrt{(m+1)(m+2)}e_{m+2} \right),
\end{align*}
which explains why we want $(B_K)$ to satisfy \eqref{eq:def_BK_1}.
Moreover, \eqref{eq:def_BK_4} implies that $B_K$ is symmetric.
The extension to the $d$-dimensional case, with $d\geq 2$, is done by setting
for $i_1,i_2,\ldots,i_d \leq K$,
\begin{align}
  \label{eq:def_BK_5}
  B_K\left(\bigotimes_{k=1}^d e_{i_k}\right)
  =\sum_{j=1}^d \bigotimes_{k=1}^d \left(\indic{j=k}B_K+\indic{j\neq k}Id\right)e_{i_k}.
\end{align}
We refer to \cite{reed1975fourier} for basics about tensorisation.
For example, with $d=2$,
$B_K(e_{i_1}\otimes e_{i_2})=(B_K e_{i_1})\otimes e_{i_2}+e_{i_1}\otimes (B_K e_{i_2})$.
This extension is natural since it holds for operator $\abs{x}^2$.
\vspace{3mm}

In this paper, stability is proved by an extensive use of the boundedness of the operators $B_K$.
Convergence toward the solution of \eqref{eq:GP} will be achieved by making
$\delta t$ go to zero and $K$ and $L$ to infinity.
We will show that stability holds if $K$ does not tend to infinity quicker than $\delta t^{-1/4}$,
We will show then that convergence of order at least one in probability holds
if $K$ actually scales like $\delta t^{-1/4}$.
\vspace{3mm}

\section{Well-posedness of the numerical scheme}
\label{sec:well-posedness}

We begin with explaining how to choose $C_0$ in the definition \ref{eq:tau_lin}
of $\tau(\delta t,C_0)$.
We require in the following the operator $S_{\delta t,K,n+1,C_0}$ and
$T^{-1}_{\delta t,K,n+1,C_0}$ to be uniformly bounded in $K$.
It is possible to choose $C_0$ to satisfy this requirement as stated by the
following lemma,
\begin{lem}
  \label{lem:C_0}
  Under Assumption \ref{as:1},
  for all $j\in\N$, there exists $C_0(j)>0$ such that
  for all $K\in\N^*$, $\phi\in\Sigma_K(\R^d)$, $\delta t\in (0,1)$ and $n\in\N$,
  \begin{align*}
    \normSigma{j}{S_{\delta t,K,n+1,C_0(j)}\phi}^2 &\leq 2 \normSigma{j}{\phi}^2,\quad\text{a.s.}\\
    \normSigma{j}{T^{-1}_{\delta t,K,n+1,C_0(j)}\phi}^2 &\leq 2 \normSigma{j}{\phi}^2,\quad\text{a.s.}
  \end{align*}
\end{lem}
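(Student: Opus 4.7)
The claim is trivial on the event $\{n+1 \geq \tau(\delta t,C_0)\}$, since there $T^{-1}_{\delta t,K,n+1,C_0}=0$ and $S_{\delta t,K,n+1,C_0}=\mathrm{Id}$, so I restrict to the complementary event, on which $|\beta|:=\tfrac{\sqrt{\delta t}}{2}|\chi^{n+1}|=\tfrac{1}{2}|W_{t_{n+1}}-W_{t_n}|<C_0/2$. The operator $M:=\tfrac{\delta t}{2}A_K+\beta B_K$ is $L^2$-self-adjoint on $\Sigma_K$ by property \eqref{as:eq:0} and self-adjointness of $A$ on $\Sigma_K$; hence $T:=\mathrm{Id}+iM$ is normal with $T^*T=\mathrm{Id}+M^2$. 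This gives $\|T\phi\|_{L^2}^2\geq\|\phi\|_{L^2}^2$, so $T$ is invertible on the finite-dimensional space $\Sigma_K$ with $\|T^{-1}\|_{L^2\to L^2}\leq 1$, and the Cayley transform $S=T^{-1}T^*$ is $L^2$-unitary.

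For the $\Sigma^j$ bound on $T^{-1}$, I use that $A^j$ commutes with $A_K$ on $\Sigma_K$, so $[A^j,T]=i\beta[A^j,B_K]$; combined with $M^*=M$ this yields
\[
\|T\phi\|_{\Sigma^j}^2=\|\phi\|_{\Sigma^j}^2+\|M\phi\|_{\Sigma^j}^2+i\beta\langle[A^j,B_K]\phi,\phi\rangle.
\]
Discarding the nonnegative term $\|M\phi\|_{\Sigma^j}^2$, the crux is a $K$-uniform bound $|\langle[A^j,B_K]\phi,\phi\rangle|\leq C(j)\|\phi\|_{\Sigma^j}^2$, which I obtain from property \eqref{as:eq:3} as follows: $[A^j,B_K]$ is uniformly bounded $\Sigma^{2j}\to L^2$ in $K$; by skew-adjointness (both $A^j$ and $B_K$ are self-adjoint) and duality, it is also uniformly bounded $L^2\to\Sigma^{-2j}$; complex interpolation at $\theta=1/2$ then gives $[A^j,B_K]:\Sigma^j\to\Sigma^{-j}$ uniformly bounded, and Cauchy--Schwarz concludes. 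Choosing $C_0(j)$ small enough that $C_0(j)C(j)\leq 1$ yields $|\beta|C(j)\leq 1/2$ and hence $\|T\phi\|_{\Sigma^j}^2\geq\tfrac{1}{2}\|\phi\|_{\Sigma^j}^2$, i.e.\ $\|T^{-1}\psi\|_{\Sigma^j}^2\leq 2\|\psi\|_{\Sigma^j}^2$.

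For $S$, I iterate the commutator trick: from $[A^j,T^{-1}]=-i\beta T^{-1}[A^j,B_K]T^{-1}$ and $[A^j,T^*]=-i\beta[A^j,B_K]$ one obtains, on $\Sigma_K$,
\[
A^jS\psi=SA^j\psi-i\beta T^{-1}[A^j,B_K](\mathrm{Id}+S)\psi.
\]
Pairing with $S\psi$ in $L^2$, using $S^*S=\mathrm{Id}$ so that $\langle SA^j\psi,S\psi\rangle=\|\psi\|_{\Sigma^j}^2$, together with the $\Sigma^j$-boundedness of $(T^*)^{-1}$ (obtained by applying the previous argument to $T^*$) and the commutator estimate above, one gets
\[
\|S\psi\|_{\Sigma^j}^2\leq\|\psi\|_{\Sigma^j}^2+c(j)|\beta|\bigl(\|\psi\|_{\Sigma^j}+\|S\psi\|_{\Sigma^j}\bigr)\|S\psi\|_{\Sigma^j};
\]
Young's inequality and a further smallness condition on $C_0(j)$ absorb the $\|S\psi\|_{\Sigma^j}^2$ terms on the right, yielding $\|S\psi\|_{\Sigma^j}^2\leq 2\|\psi\|_{\Sigma^j}^2$. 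The main obstacle is the $K$-uniformity of the commutator bound: the crude estimate via $\|B_K\|_{\Sigma^j\to\Sigma^j}\leq C(j)K$ from property \eqref{as:eq:2} would force $C_0$ to shrink with $K$; only the finer commutator regularity in property \eqref{as:eq:3}, together with the interpolation argument above, permits a $K$-independent choice of $C_0(j)$.
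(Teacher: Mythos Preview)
Your proof is correct. Both arguments rest on the same key estimate $|\langle[A^j,B_K]u,u\rangle|\leq C(j)\|u\|_{\Sigma^j}^2$, which you justify carefully by interpolation (skew-adjointness gives $[A^j,B_K]:L^2\to\Sigma^{-2j}$ from \eqref{as:eq:3}, then interpolate to $\Sigma^j\to\Sigma^{-j}$), whereas the paper simply invokes \eqref{as:eq:3}; your explicit justification is a genuine addition.

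The route, however, is different. The paper exploits the Crank--Nicolson midpoint structure directly: writing $\psi=S\phi$ and using $\psi-\phi=-i\bigl(\delta t A_K+\sqrt{\delta t}\chi B_K\bigr)\tfrac{\phi+\psi}{2}$, one pairs with $A^j(\phi+\psi)$ to obtain in one line
\[
\|\psi\|_{\Sigma^j}^2-\|\phi\|_{\Sigma^j}^2=\tfrac{\sqrt{\delta t}}{4}\chi\,\Im\langle[A^j,B_K](\phi+\psi),\phi+\psi\rangle,
\]
and the same computation handles $T^{-1}$. Your argument is more operator-theoretic: you treat $T^{-1}$ first via the expansion of $\|T\phi\|_{\Sigma^j}^2$, then handle $S$ through the commutator identity $A^jS=SA^j-i\beta T^{-1}[A^j,B_K](\mathrm{Id}+S)$ together with the $L^2$-unitarity of $S$ and the already-proved bound on $(T^*)^{-1}$. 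The paper's single energy identity is shorter and uses nothing beyond the implicit relation; your approach is more modular and makes the role of the Cayley-transform structure explicit, at the cost of an extra step (the $(T^*)^{-1}$ bound feeding into the $S$ estimate). Both yield the same $K$-independent $C_0(j)$.
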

From now on, we suppose that $C_0(j)$ satisfies this lemma for a given $j$.
We denote then $S_{\delta t,K,n+1,j}$ and $T^{-1}_{\delta t,K,n+1,j}$.
This technical difficulty did not appear in \cite{dbd04} where the noise is multiplicative.
The difference comes from the fact that in our case the potential reduces the
regularity of the noise term.
One can note that this truncation on the noise would have been also necessary in
order to prove Claim \ref{claim1} for operators $T_{\delta t,n+1}$ and
$S_{\delta t,n+1}$ as stated before.
\begin{proof}[Proof of Lemma \ref{lem:C_0}]
  Let $\phi\in\Sigma_K(\R^d)$, $n\in\N$ and $C_0>0$. Then, to simplify notations, we set
  $\psi=S_{\delta t,K,n+1,C_0}\phi$.
  For almost every $\omega\in \{\tau(\delta t)\leq n+1\}$, $\psi=\phi$,
  where $\tau(\delta t)$ is defined by \eqref{eq:tau_lin},
  and there is nothing to prove.
  Then, we suppose that $\omega\in \{\tau(\delta t)> n+1\}$,
  which implies that $\sqrt{\delta t}\abs{\chi^{n+1}} < C_0$.
  The expression of $S_{\delta t,K,n+1,C_0(j)}$ enables to write,
  \begin{align}
    \label{eq:C_0_proof}
    \psi=\phi-i\left(A_K\delta t+B_K\sqrt{\delta t}\chi\right)\left(\dfrac{\phi+\psi}{2}\right).
  \end{align}
  Then, by taking the $L^2$ inner product of both sides of this equation with $A^j(\phi+\psi)$,
  one finds the following implicit expression of the growth of the solution :
  \begin{align*}
    \normSigma{j}{\psi}^2-\normSigma{j}{\phi}^2
    =&\Re\langle A^j(\phi+\psi),\psi-\phi \rangle\\
    =&-\dfrac{\sqrt{\delta t}}{2}\chi\Im\langle A^j(\phi+\psi),B_K(\phi+\psi)\rangle.
  \end{align*}
  Then, using first \eqref{as:eq:0}, and then \eqref{as:eq:3},
  \begin{align}
    \label{eq:trunc_maj}
    \normSigma{j}{\psi}^2-\normSigma{j}{\phi}^2
    &=\dfrac{\sqrt{\delta t}}{4}\chi\Im\langle [A^j,B_K](\phi+\psi),\phi+\psi\rangle\\
    &\leq \sqrt{\delta t}\abs{\chi} C(j)\left(\normSigma{j}{\psi}^2+\normSigma{j}{\phi}^2\right).
  \end{align}
  The last line exhibits the reason why we require the truncation on the Gaussian increment.
  Indeed, since $\omega\in \{\tau(\delta t)> n+1\}$, we get that
  \begin{align}
    \label{eq:trunc_maj2}
    \normSigma{j}{\psi}^2-\normSigma{j}{\phi}^2
    \leq C_0 C(j)\left(\normSigma{j}{\psi}^2+\normSigma{j}{\phi}^2\right),
  \end{align}
  and thus by choosing $C_0(j)=\frac{1}{3 C(j)}$, we get that
  \begin{align*}
    \normSigma{j}{\psi}^2\leq 2 \normSigma{j}{\phi}^2.
  \end{align*}
  The estimate on $T^{-1}_{\delta t,K,n+1,C_0(j)}$ is proven in a similar way.
\end{proof}
\vspace{3mm}

The next proposition states the well-posedness of the numerical scheme defined by
\eqref{eq:scheme}.
In this proposition we consider $K$ as a function of $\delta t$, so that $K$ may tend to infinity
when $\delta t$ tends to zero.
Since the non-linearity is implicit, well-posedness is not immediate.
It is classically shown by use of a Banach fixed point theorem,
which can be easily implemented.
Moreover, this proof relies on the Lipschitz truncation of the nonlinearity.
\begin{prop}
  \label{prop:exist_scheme}
  Under Assumption \ref{as:1},
  for all $k>d/2$,
  $j\geq k$,
  $L\in\N^*$,
  $T>0$
  and $K_0>0$,
  there exists $\delta t_0(j,L,K_0)>0$
  and $C(j,L,K_0,T)>0$ such that
  for all $\phi_0\in\Sigma^j(\R^d)$,
  for all $\delta t=T/N\leq\delta t_0(j,L,K_0)$,
  and $K\leq K_0\delta t^{-1/4}$,
  there exists a unique discrete solution $(\F_n)$-adapted
  $\phi=(\phi^n)_{n=0,\ldots,N}$
  satisfying \eqref{eq:scheme},
  almost surely in $L^2_\omega L^\infty([0,T];\Sigma^j(\R^d))$,
  and such that
  \begin{align*}
    \esp{\sup_{n\leq N}\normSigma{j}{\phi^n}^2}
    \leq
    C(j,L,K_0,T)\normSigma{j}{\phi_0}^2.
  \end{align*}
\end{prop}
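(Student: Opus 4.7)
My plan is to (i) establish existence and uniqueness at each time step via a Banach fixed point argument on the finite-dimensional space $\Sigma_K(\R^d)$, (ii) iterate inductively from $\phi^0=P_K\phi_0$ to obtain the whole trajectory, and (iii) derive the $L^2_\omega L^\infty_t$ moment bound through a discrete energy identity in the $\Sigma^j$ norm.

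For step (i), fix $\phi^n\in\Sigma_K$ and consider the self-map
\begin{equation*}
\Phi:\psi\longmapsto S_{\delta t,K,n+1,j}\phi^n - i\delta t\lambda\,T^{-1}_{\delta t,K,n+1,j}P_K g^k_L(\phi^n,\psi)
\end{equation*}
on $\Sigma_K(\R^d)$ equipped with the $\Sigma^j$ norm. Lemma \ref{lem:C_0} provides a uniform bound $\sqrt{2}$ (independent of $K$, $n$, $\delta t$) on the $\Sigma^j$ operator norms of $S_{\delta t,K,n+1,j}$ and $T^{-1}_{\delta t,K,n+1,j}$. The truncation $\theta_L(\normSigma{k}{\cdot}^2)$ confines the support of $g^k_L$ to $\{\normSigma{k}{\phi}\leq\sqrt{2L}\}$; since $k>d/2$, the Sobolev embedding $\Sigma^k\hookrightarrow L^\infty$ combined with the $\Sigma^j$-algebra property ($j\geq k>d/2$) and Moser-type estimates show that $g^k_L$ is $C(j,L)$-Lipschitz as a map $(\Sigma^j)^2\to\Sigma^j$. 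Choosing $\delta t_0(j,L)$ with $\sqrt{2}\,\delta t_0\,C(j,L)<1$ makes $\Phi$ a contraction, yielding a unique fixed point $\phi^{n+1}\in\Sigma_K$; continuous dependence of the fixed point on $(\phi^n,\chi^{n+1})$ gives the $\F_{n+1}$-measurability. This step does \emph{not} rely on $K\leq K_0\delta t^{-1/4}$.

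For step (iii), rewrite the scheme on the event $\{n+1<\tau(\delta t,C_0(j))\}$ in the increment form
\begin{equation*}
\phi^{n+1}-\phi^n = -i\bigl(\delta t\,A_K+\sqrt{\delta t}\chi^{n+1}B_K\bigr)\tfrac{\phi^n+\phi^{n+1}}{2}-i\delta t\lambda P_K g^k_L(\phi^n,\phi^{n+1}),
\end{equation*}
and pair it with $A^j(\phi^n+\phi^{n+1})$ in $L^2$. Self-adjointness of $A^j A_K$ on the eigenvector span $\Sigma_K$ annihilates the dispersive contribution; the noise term reduces to $\tfrac{\sqrt{\delta t}\chi^{n+1}}{4}\Im\langle[A^j,B_K](\phi^n+\phi^{n+1}),\phi^n+\phi^{n+1}\rangle$, controlled via Assumptions \eqref{as:eq:3}--\eqref{as:eq:4}; the nonlinear contribution is handled by the Lipschitz estimate of step (i). This produces a recursion of the form $\normSigma{j}{\phi^{n+1}}^2-\normSigma{j}{\phi^n}^2 \leq C\sqrt{\delta t}|\chi^{n+1}|(\normSigma{j}{\phi^n}^2+\normSigma{j}{\phi^{n+1}}^2)+C\delta t(\ldots)$.

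The \textbf{main obstacle} is then to prevent the per-step factor $1+C\sqrt{\delta t}|\chi^{n+1}|$ from compounding into exponential blow-up $\exp(C\sqrt{T/\delta t})$ upon iteration. The remedy is a discrete analogue of the It\^o--Stratonovich decomposition: one Taylor-expands $\phi^{n+1}=\phi^n+O_{\Sigma^j}(\sqrt{\delta t})$ inside the noise bracket, so that the $\F_n$-measurable leading part multiplied by $\chi^{n+1}$ forms a martingale increment with vanishing conditional mean, while the remainder is a quadratic It\^o-type correction of order $\delta t$. The constraint $K\leq K_0\delta t^{-1/4}$ enters precisely here through Assumption \eqref{as:eq:2} ($|||B_K|||_{\mathcal{L}(\Sigma^j,\Sigma^j)}\leq C(j)K$): it is the largest scaling of $K$ with $\delta t$ under which the quadratic It\^o-correction from the expansion stays small enough per step in expectation, keeping the telescoped Gronwall sum finite. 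Combining a discrete Doob/BDG inequality on the martingale part with a discrete Gronwall on the drift then yields $\esp{\sup_{n\leq N}\normSigma{j}{\phi^n}^2}\leq C(j,L,K_0,T)\normSigma{j}{\phi_0}^2$.
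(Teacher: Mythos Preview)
Your overall architecture is sound and genuinely different from the paper's: you run a per-step fixed point on $\Sigma_K$ for existence, then extract the $\Sigma^j$ moment bound via a direct discrete energy estimate on the full nonlinear scheme. The paper instead runs the Banach fixed point on the \emph{trajectory} space $X(j,\Delta)=L^2_\omega L^\infty(0,\Delta;\Sigma^j)$, with the mild (Duhamel) form
\[
\mathcal{T}(\phi)(t_n)=\Bigl(\prod_{l=1}^n S_{\delta t,K,l,j}\Bigr)\phi_0 - i\lambda\delta t\sum_{m=1}^n\Bigl(\prod_{l=m+1}^n S_{\delta t,K,l,j}\Bigr)T^{-1}_{\delta t,K,m,j}P_K g^k_L(\phi(t_{m-1}),\phi(t_m)),
\]
so that the linear stability result (Lemma~\ref{lem:stability_lin}) controls the products $\prod S$ as a black box, and the moment bound drops out of the ball radius after iterating over $T/\Delta$ subintervals. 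The paper's route cleanly separates the hard part (linear stability, where $K\le K_0\delta t^{-1/4}$ enters) from the soft nonlinear layer; your route is more direct but forces you to redo the martingale/It\^o-correction analysis of Lemma~\ref{lem:stability_lin} with the nonlinear term present---which indeed works, since the extra term $-i\delta t\,P_K g^k_L$ contributes only $O(\delta t)\normSigma{j}{\phi^{n+1/2}}$ to $\phi^{n+1}-\phi^n$ and is harmless in all the $I$, $II$, $III$ estimates. One detail you gloss over: the Doob step for the martingale part $\sum III_{n+1}$ needs an a priori bound on $\esp{\normSigma{j}{\phi^n}^4}$, which the paper obtains first by a separate Gronwall argument before upgrading to $\esp{\sup_n\normSigma{j}{\phi^n}^2}$.

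There is however a genuine error in step~(i): $g^k_L$ is \emph{not} $C(j,L)$-Lipschitz as a map $(\Sigma^j)^2\to\Sigma^j$. The truncation $\theta_L$ acts on $\normSigma{k}{\cdot}$, so on its support only the $L^\infty$ norms (via $\Sigma^k\hookrightarrow L^\infty$) are controlled by $L$; the tame estimate $\normSigma{j}{uvw}\lesssim \normL{\infty}{u}\normL{\infty}{v}\normSigma{j}{w}+\text{perms}$ then leaves factors of $\normSigma{j}{\phi^n}$ or $\normSigma{j}{\psi}$ in the Lipschitz constant, so your contraction bound becomes $\sqrt{2}\,\delta t\,C(j,L)\,\normSigma{j}{\phi^n}$ and the admissible $\delta t_0$ would shrink along the iteration. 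This is exactly why the paper uses a \emph{two-norm} fixed point: the contraction is taken in the weaker $\Sigma^k$ topology (where Lemma~\ref{lem:lips} gives a genuine $C(k,L)$-Lipschitz bound), while the $\Sigma^j$ ball is preserved using only the linear-growth estimate $\normSigma{j}{g^k_L(u,v)}\le C(j,L)(\normSigma{j}{u}+\normSigma{j}{v})$. In your per-step setting the fix is immediate because $\Sigma_K$ is finite-dimensional: contract $\Phi$ in the $\Sigma^k$ norm (yielding $\delta t_0(k,L)$ independent of $\phi^n$), obtain the unique fixed point, and then rely on your step~(iii) for the $\Sigma^j$ control. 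But as written, the claim ``$g^k_L$ is $C(j,L)$-Lipschitz in $\Sigma^j$'' is false and the argument does not close.
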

It is important to notice that the norm in which we truncate the nonlinearity
is not linked to the space in which we prove existence and uniqueness,
but to the space in which we will prove the convergence of the numerical scheme.
The fact that $\phi$ belongs a.s. in $L^\infty(0,T;\Sigma^j(\R^d))$
is understood supposing $\phi$ constant on every interval
$[n\delta t,(n+1)\delta t[$ for $0\leq n\leq N-1$.
\vspace{3mm}

The most technical part in the proof of this proposition lies in showing
the stability in $\Sigma^j$ for the linear part of the scheme, given by
\begin{equation}
  \label{eq:scheme_lin}
  \left\{
  \begin{aligned}
    \phi^{n+1}&=S_{\delta t,K,n+1,C_0}\phi^n,\\
    \phi^0&=P_K \phi_0.
  \end{aligned}
  \right.
\end{equation}
Stability of the linear equation \eqref{eq:scheme_lin} is a main issue in this article.
We recall that we were not able to prove stability for the semi-discrete
Crank-Nicolson scheme defined by \eqref{eq:scheme_refine2}.
The proof we present uses extensively the boundedness of operators $B_K$,
and the truncation of the noise.
The next lemma states the stability result for \eqref{eq:scheme_lin}.
\begin{lem}
  \label{lem:stability_lin}
  For all $j\in\N$, $T>0$, $K\in\N^*$ and $N\in\N^*$ such that $\delta t=T/N$,
  there exists a constant $C(j,T,K^4\delta t)>0$ such that
  for all $\phi_0\in\Sigma^j(\R^d)$,
  \begin{equation}
    \label{eq:stability_lin}
    \esp{\sup_{n\leq N} \normSigma{j}{\phi^n}^2}
    \leq C(j,T,K^4\delta t)\normSigma{j}{\phi^0}^2,
  \end{equation}
  where $(\phi^n)_{n\leq N}$ denotes the solution of the linear scheme
  given by \eqref{eq:scheme_lin}.
\end{lem}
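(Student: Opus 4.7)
The plan is to derive a discrete It\^{o}-type formula for $\normSigma{j}{\phi^n}^2$, decompose its one-step increment into a martingale, a drift controlled by assumption \eqref{as:eq:4}, and higher-order remainders whose size is governed by $K^4\delta t$, and then conclude via Doob's maximal inequality together with a discrete Gr\"onwall argument.

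Concretely, on the event $\{n+1<\tau(\delta t,C_0(j))\}$, the scheme \eqref{eq:scheme_lin} reads $\phi^{n+1}-\phi^n=-i(\delta t\,A_K+\sqrt{\delta t}\chi^{n+1}B_K)\rho_n$ with $\rho_n:=(\phi^{n+1}+\phi^n)/2=T^{-1}_{\delta t,K,n+1,j}\phi^n$. Taking the $L^2$ inner product with $A^j(\phi^{n+1}+\phi^n)=2A^j\rho_n$ and extracting real parts, exactly as in Lemma \ref{lem:C_0}, the $A_K$-contribution drops because $\langle A^{j+1}u,u\rangle\in\mathbb{R}$ for $u\in\Sigma_K(\R^d)$, and one obtains the exact identity
\begin{equation*}
\normSigma{j}{\phi^{n+1}}^2-\normSigma{j}{\phi^n}^2 = \sqrt{\delta t}\,\chi^{n+1}\,\Im\langle[A^j,B_K]\rho_n,\rho_n\rangle,
\end{equation*}
with zero increment on the complementary event. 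Setting $H_n=(i/2)(\delta t A_K+\sqrt{\delta t}\chi^{n+1}B_K)$ and using the exact decomposition $\rho_n=\phi^n-H_n\phi^n+T^{-1}_{n+1}H_n^2\phi^n$, I would expand the right-hand side, invoking the anti-selfadjointness of $[A^j,B_K]$ to merge the two cross terms, into three contributions: $\mathcal{M}_n=\sqrt{\delta t}\chi^{n+1}\Im\langle[A^j,B_K]\phi^n,\phi^n\rangle$; $\mathcal{D}_n=-\delta t(\chi^{n+1})^2\Re\langle[A^j,B_K]\phi^n,B_K\phi^n\rangle$; and a remainder $\mathcal{E}_n$ of formal order $\delta t^{3/2}$ or higher.

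The coefficient of $\mathcal{M}_n$ is $\F_n$-measurable, so $\sum_m\mathcal{M}_m$ is a martingale; thanks to the identity $\Im\langle[A^j,B_K]u,u\rangle=-2\Im\langle A^{j/2}u,[A^{j/2},B_K]u\rangle$ combined with \eqref{as:eq:3} applied with $k=j/2$, this coefficient is bounded by $C(j)\normSigma{j}{\phi^n}^2$ uniformly in $K$. The drift $\mathcal{D}_n$ is controlled directly by assumption \eqref{as:eq:4}, yielding $|\mathcal{D}_n|\leq C(j)\delta t(\chi^{n+1})^2\normSigma{j}{\phi^n}^2$. The remainder $\mathcal{E}_n$ gathers the terms involving $T^{-1}_{n+1}H_n^2\phi^n$ and $\Im\langle[A^j,B_K]\sigma_n,\sigma_n\rangle$ with $\sigma_n=\rho_n-\phi^n$; each extra application of $B_K$ costs a factor $K$ by \eqref{as:eq:2} on $\Sigma_K$, paid off by a factor $\sqrt{\delta t}|\chi^{n+1}|$, and since odd moments of the truncated $\chi^{n+1}$ vanish by symmetry, the cumulated contribution $\sum_m\mathcal{E}_m$ is in expectation of order $T\cdot K^4\delta t\cdot\sup_m\esp{\normSigma{j}{\phi^m}^2}$, which is precisely the regime targeted by the lemma.

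Summing the identity over $m\leq n-1$, taking $\sup_n$ and expectation, applying Doob's $L^2$-maximal inequality to $\sum_m\mathcal{M}_m$ (whose quadratic variation is bounded by $C\delta t\sum_m\esp{\normSigma{j}{\phi^m}^4}$, controlled through a parallel bootstrap on the fourth moment), combining with the drift bound and the remainder estimate, and invoking a discrete Gr\"onwall lemma yields \eqref{eq:stability_lin}. The principal difficulty is the remainder analysis: $|||H_n|||_{\mathcal{L}(\Sigma^j,\Sigma^j)}$ can be as large as $CKC_0$ on the stopped event, so the Neumann series for $T^{-1}_{n+1}$ does not converge uniformly in $K$. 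One must instead use the exact identity $(Id+H_n)^{-1}=Id-H_n+T^{-1}_{n+1}H_n^2$, combine the pathwise $\Sigma^j$-boundedness of $T^{-1}_{n+1}$ from Lemma \ref{lem:C_0} with the expectation-level decay $\esp{(\sqrt{\delta t}\chi^{n+1})^{2p}}=O(\delta t^p)$, and it is the balance between the two $K$-factors produced by $H_n^2$ and the $\delta t$ gained via even moments of the noise that accounts for the constraint $K^4\delta t = O(1)$ in the constant $C(j,T,K^4\delta t)$.
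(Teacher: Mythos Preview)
Your proposal is correct and follows essentially the same route as the paper. Both start from the exact one-step identity $\normSigma{j}{\phi^{n+1}}^2-\normSigma{j}{\phi^n}^2=\sqrt{\delta t}\,\widetilde\chi^{n+1}\Im\langle[A^j,B_K]\rho_n,\rho_n\rangle$, split the right-hand side into a martingale piece, a drift controlled by \eqref{as:eq:4}, and remainders governed by powers of $K^2\delta t^{1/2}$, then close via a fourth-moment bootstrap, Doob's maximal inequality, and discrete Gr\"onwall. The only organizational difference is that the paper reaches the decomposition by the polarization-type identity $\langle[A^j,B_K](\phi^{n+1}+\phi^n),\phi^{n+1}+\phi^n\rangle=I_{n+1}+II_{n+1}+III_{n+1}$ in terms of $\phi^{n+1}\pm\phi^n$ and then substitutes the scheme for $\phi^{n+1}-\phi^n$ (its Lemma~\ref{lem:stab_lin_first_estimates}), whereas you expand $\rho_n=T^{-1}_{n+1}\phi^n$ via the exact resolvent identity $(Id+H_n)^{-1}=Id-H_n+T^{-1}_{n+1}H_n^2$; the resulting martingale term $\mathcal{M}_n$ is the paper's $III_{n+1}$, the drift $\mathcal{D}_n$ is the paper's term $ii$ inside $II_{n+1}$, and your remainder $\mathcal{E}_n$ regroups the paper's $I_{n+1}$ and the remaining pieces of $II_{n+1}$.
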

This Lemma enables to show stability for the linear equation on a fixed interval $[0,T]$,
scaling $K$ as $\delta t^{-1/4}$.
\begin{rem}
  \label{rem:stab_lin}
  The proof of Lemma \ref{lem:stability_lin} relies on
  the discrete analogous of the It\^{o} lemma for the continuous process $(\phi_t)_{t\geq 0}$
  solution of the linear part of \eqref{eq:GP},
  \begin{align*}
    &d\phi= -iA\phi dt -i|x|^2\phi\circ dW_t,\\
  \end{align*}
  Indeed, to show the stability of the numerical scheme, the idea is to compute
  the variation of the $\Sigma(\R^d)$ norm of the solution of the numerical scheme
  between two time steps.
  The computation is done in the proof of Lemma \ref{lem:stability_lin}
  (see Equation \eqref{eq:ito_discrete_linear}) and gives,
  \begin{align}
    \label{eq:discreteItobis}
    \normSigma{j}{\phi^{n+1}}^2=&\normSigma{j}{\phi^n}^2\\
    &+\sqrt{\delta t}\widetilde{\chi}^{n+1} \Im \langle [A^j,\abs{x}^2]\phi^n,\phi^n \rangle\\
    &+\dfrac{\sqrt{\delta t}}{2}\widetilde{\chi}^{n+1} \Im \langle [A^j,\abs{x}^2](\phi^{n+1}+\phi^n),(\phi^{n+1}-\phi^n)\rangle\\
    &-\dfrac{\sqrt{\delta t}}{4}\widetilde{\chi}^{n+1} \Im \langle [A^j,\abs{x}^2](\phi^{n+1}-\phi^n),(\phi^{n+1}-\phi^n)\rangle.
  \end{align}
  For the time continuous process, a similar computation using It\^{o}'s lemma gives,
  \begin{align}
    \label{eq:discreteIto}
    \normSigma{j}{\phi_{t_{n+1}}}^2=&\normSigma{j}{\phi_{t_{n}}}^2
    +\int_{t_n}^{t_{n+1}} \Im \langle [A^j,\abs{x}^2] \phi_s , \phi_s \rangle dWs
    +\int_{t_n}^{t_{n+1}} \Re \langle [A^j,\abs{x}^2] \phi_s ,\abs{x}^2 \phi_s \rangle ds.
  \end{align}
  Then, the first term in the right-hand side of Equation \eqref{eq:discreteItobis}
  is an approximation of the stochastic integral of Equation \eqref{eq:discreteIto},
  the second term in the right hand-side is an approximation of the It\^{o} correction
  in Equation \eqref{eq:discreteIto}.
  It is possible to estimate these two terms in the same way as it would be done
  to show at most exponential growth for \eqref{eq:discreteIto} (using Gronwall's inequality).
  Yet, the main problem is the apparition of the third term in the right-hand side
  of Equation \eqref{eq:discreteItobis}.
  We do not know how to estimate it.
  It is the reason why we make use of the spatial discretization
  and the approximation $B_K$ of the operator $\abs{x}^2$.
  \end{rem}

We prove now first Proposition \ref{prop:exist_scheme} and then
Lemma \ref{lem:stability_lin}.
\begin{proof}[\textbf{Proof of Proposition \ref{prop:exist_scheme}}]
  First, we recall that the approximation of the nonlinearity is Lipschitz
  as stated by the following lemma,
  \begin{lem}
    \label{lem:lips}
     Let $k>d/2$. Then for all $L\in\N^*$, there exists $C(L,k)>0$ such that,
    \begin{align*}
      \forall u_1,u_2,v_1,v_2\in\Sigma^k(\R^d),\quad\normSigma{k}{g^k_L(u_1,v_1)-g^k_L(u_2,v_2)}\leq C(L,k)(\normSigma{k}{u_1-u_2}+\normSigma{k}{v_1-v_2}).
    \end{align*}
    Moreover, for $d=1,2,3$, for all $j\geq k$, and for all $L\in\N^*$, there exists $C(j,L)>0$ such that,
    \begin{align*}
      \forall u,v\in\Sigma^j(\R^d),\quad \normSigma{j}{g^k_L(u,v)}\leq C(j,L)\left( \normSigma{j}{u}+\normSigma{j}{v} \right).
    \end{align*}
    where $g^k_L$ is defined by equation \eqref{eq:f_L}.
  \end{lem}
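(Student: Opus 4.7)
The plan is to combine three standard ingredients: (i) the Sobolev-type embedding $\Sigma^k(\R^d) \hookrightarrow L^\infty(\R^d)$ for $k > d/2$, which follows from $\Sigma^k \subset H^k$ and the classical Sobolev embedding; (ii) a Moser-type tame product estimate in the weighted space, of the form
\[
\normSigma{j}{uv} \leq C\bigl(\normSigma{j}{u}\|v\|_\infty + \|u\|_\infty \normSigma{j}{v}\bigr),
\]
obtained by expanding $x^\beta\partial^\alpha(uv)$ via Leibniz and splitting the monomial weight $x^\beta = x^{\beta_1}x^{\beta_2}$ between the two factors; and (iii) the $\mathcal{C}^\infty$-regularity, boundedness and support properties of $\theta_L$, together with the crucial observation that $g^k_L(u,v)$ vanishes unless $\normSigma{k}{u}^2 \vee \normSigma{k}{v}^2 \leq 2L$.

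For the Lipschitz assertion, I would set $\Theta(u,v) = \theta_L(\normSigma{k}{u}^2)\theta_L(\normSigma{k}{v}^2)$ and $P(u,v) = \frac{1}{4}(|u|^2+|v|^2)(u+v)$, so that $g^k_L = \Theta \cdot P$, and decompose
\begin{align*}
g^k_L(u_1,v_1) - g^k_L(u_2,v_2) &= \bigl(\Theta(u_1,v_1) - \Theta(u_2,v_2)\bigr) P(u_1,v_1) \\
&\quad + \Theta(u_2,v_2)\bigl(P(u_1,v_1) - P(u_2,v_2)\bigr).
\end{align*}
The scalar cutoff $\Theta$ is $C(L)$-Lipschitz on $\Sigma^k \times \Sigma^k$, because $\theta_L$ is $(C/L)$-Lipschitz and $\normSigma{k}{\cdot}^2$ is locally Lipschitz with constant $C\sqrt{L}$ on the ball of radius $\sqrt{2L}$. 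Iterating (ii) (using $\|u\|_\infty \leq C \normSigma{k}{u}$) yields the algebra-type bound $\normSigma{k}{P(u,v)} \leq C(\normSigma{k}{u}+\normSigma{k}{v})^3$, which is capped by $C(L)$ on the support of $\Theta$. For the second piece, telescoping the cubic polynomial $P(u_1,v_1)-P(u_2,v_2)$ into a finite sum of triple products each containing a factor $u_1-u_2$ or $v_1-v_2$, and applying (ii) once more, gives the desired Lipschitz bound with constant $C(L,k)$.

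For the $\Sigma^j$ bound, two successive applications of (ii) to the cubic expression $P(u,v)$ produce
\[
\normSigma{j}{P(u,v)} \leq C(j)\bigl(\|u\|_\infty + \|v\|_\infty\bigr)^2 \bigl(\normSigma{j}{u} + \normSigma{j}{v}\bigr).
\]
Since $g^k_L$ vanishes unless $\normSigma{k}{u},\normSigma{k}{v} \leq \sqrt{2L}$, the two $L^\infty$-factors are controlled by $C\sqrt{L}$ via (i), and $|\Theta|\leq 1$. Combining gives $\normSigma{j}{g^k_L(u,v)} \leq C(j,L)(\normSigma{j}{u}+\normSigma{j}{v})$, with the constant absorbing the $L$-dependent $L^\infty$ bounds.

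The main technical step is the Moser-type estimate (ii). Its derivative part is classical, but the weight part requires that for every Leibniz term $(x^{\beta_1}\partial^{\alpha_1}u)(x^{\beta_2}\partial^{\alpha_2}v)$ arising in the expansion of $x^\beta\partial^\alpha(uv)$, the factor placed in $L^\infty$ has total order $|\alpha_i|+|\beta_i|$ small enough for the embedding $\Sigma^m \hookrightarrow L^\infty$ with some $m > d/2$ to apply, while the other factor retains enough order to be absorbed into $\normSigma{j}{\cdot}$. Choosing at each step the factor of smaller total order for $L^\infty$, this is always possible provided $k > d/2$, which in the regime $d \leq 3$ is ensured by any $k \geq 1$. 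Once (ii) is established, the remainder of the proof reduces to routine expansions and counting.
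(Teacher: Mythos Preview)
The paper does not prove this lemma; it simply states that ``the proof of Lemma~\ref{lem:lips} is classical, and is omitted in this article.'' So there is no paper argument to compare against, and your sketch is exactly the kind of standard argument the author is tacitly invoking.

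Your overall strategy is correct. The decomposition $g^k_L=\Theta\cdot P$, the Lipschitz bound on the scalar cutoff $\Theta$, the telescoping of the cubic $P$, and the use of the support condition $\normSigma{k}{u}^2\vee\normSigma{k}{v}^2\leq 2L$ to cap the extra factors are all the right moves. For the tame estimate (ii) in $\Sigma^j$, a slightly cleaner route than the Leibniz-and-split-the-weight argument you outline is to use the equivalent norm $\normSigma{j}{w}\sim\|w\|_{H^j}+\|\,|x|^j w\|_{L^2}$: the $H^j$ piece satisfies the classical Moser product estimate, and for the weighted piece one trivially has $\|\,|x|^j(uv)\|_{L^2}\leq\|\,|x|^j u\|_{L^2}\|v\|_\infty$. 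This avoids the bookkeeping in your last paragraph.

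One small slip: the remark ``which in the regime $d\leq 3$ is ensured by any $k\geq 1$'' is false (for $d=2,3$ one needs $k\geq 2$). This is harmless, since $k>d/2$ is a standing hypothesis of the lemma and you never actually need to produce it.
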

  The proof of Lemma \ref{lem:lips} is classical, and is omitted in this article.
  Let $\Delta<T$.
  We denote by $X(j,\Delta)$ the space of $(\F_n)$-measurable processes that belong to
  $L^2_\omega L^\infty(0,\Delta;\Sigma^{j}(\R^d))$.
  To prove well-posedness of Equation \eqref{eq:scheme} in $X(j,\Delta)$, we use a fixed-point
  method.
  It is clear that the solutions of \eqref{eq:scheme} are the fixed point of
  the application $\mathcal{T}$ defined for all $\phi\in X(j,\Delta)$
  and for all $n\leq N$ by
  \begin{align}
    \label{eq:Lambda_def}
    \mathcal{T}(\phi)(t_{n})
    =\left(\displaystyle\prod_{l=1}^nS_{\delta t,K,l,j} \right)\phi_0
    -i\lambda\delta t\displaystyle\sum_{m=1}^n\left(\displaystyle\prod_{l=m+1}^n S_{\delta t,K,l,j}\right)P_KT_{\delta t,K,m,j}^{-1}g^k_L(\phi(t_{m-1}),\phi(t_m)).
  \end{align}
  We can show using Lemma \ref{lem:stability_lin}, that for all $\phi,\psi\in X(j,\Delta)$,
  \begin{align}
    \norm{X(j,\Delta)}{\mathcal{T}\phi}^2&\leq 2 e^{C(j,K_0)\Delta}\normSigma{j}{\phi_0}^2+4\Delta^2C(j,L)e^{C(j,K_0)\Delta}\norm{X(j,\Delta)}{\phi}^2\label{eq:T_ball_disc}\\
    \norm{X(k,\Delta)}{\mathcal{T}(\phi-\psi)}^2&\leq 4\Delta^2C(k,L)e^{C(k,K_0)\Delta}\norm{X(k,\Delta)}{\phi-\psi}^2\label{eq:T_contraction_disc}.
  \end{align}
  The second estimate is established in $X(k,\Delta)$ since we need $g_L^k$ to be Lipschitz.
  Choosing $\Delta$ such that $4\Delta^2C(j,L)e^{C(j,K_0)\Delta}\leq 1/2$ implies that
  $\mathcal{T}$ is a strict contraction in a ball $B^{X(j,\Delta)}_M$ of $X(j,\Delta)$
  defined by $B^{X(j,\Delta)}_M:=\{\phi\in X(j,\Delta),\norm{X(j,\Delta)}{\phi}^2\leq M\}$
  where we chose $M:=4e^{C(j,K_0)\Delta}\norm{X(j,\Delta)}{\phi_0}^2$.
  This allows to show local existence and uniqueness in $X(j,\cdot)$.
  The key point is that the choice of $\Delta$ does not depend on $\phi_0$.
  Thus, we can then iterate this process in $[\Delta,2\Delta]$ in the space
  $L^2_\omega L^\infty(\Delta,2\Delta;\Sigma^{j}(\R^d))$, and so on
  to get existence and uniqueness of a solution $X(j,T)$.
  Moreover since at each iteration the radius $M$ is multiplied by $4e^{C(j,K_0)\Delta}$,
  we eventually get
  after repeating this construction on $\frac{T}{\Delta}$ intervals
  that,
  \begin{align*}
    \esp{\sup_{t\leq T} \normSigma{j}{\phi(t)}^2}\leq 4^{T/\Delta}e^{C(j,K_0) T} \normSigma{j}{\phi_0}^2.
  \end{align*}
\end{proof}
\vspace{3mm}

\begin{proof}[Proof of Lemma \ref{lem:stability_lin}]
  To simplify the notations, we set $\phi^{n+1}=S_{\delta t,K,n+1,j}\phi^n$.
  We set $\widetilde{\chi}^{n+1}$ the random variable, independent of $\mathcal{F}_n$,
  defined by:
  \begin{equation*}
    \widetilde{\chi}^{n+1}=
    \left\{
      \begin{aligned}
        &\widetilde{\chi}^{n+1}\quad\text{if}\quad\sqrt{\delta t}\abs{\widetilde{\chi}^{n+1}}\leq C_0(j),\\
        &0\quad\text{otherwise}.
      \end{aligned}
    \right.
  \end{equation*}

  Thanks to the symmetry of $B_K$, one can write for almost every $\omega$,
  \begin{align}
    \label{eq:lemma_stab_lin_eq1}
    \normSigma{j}{\phi^{n+1}}^2=\normSigma{j}{\phi^n}^2+\dfrac{\sqrt{\delta t}}{4}\widetilde{\chi}^{n+1}\Im \langle [A^j,B_K](\phi^{n+1}+\phi^n),(\phi^{n+1}+\phi^n) \rangle.
  \end{align}
  We notice then,
  \begin{align}
    \label{eq:ito_discrete_linear}
    \sqrt{\delta t}\widetilde{\chi}^{n+1}\Im \langle [A^j,B_K](\phi^{n+1}+\phi^n),(\phi^{n+1}+\phi^n) \rangle=I_{n+1}+II_{n+1}+III_{n+1},
  \end{align}
  with
  \begin{align*}
    I_{n+1}=&-\sqrt{\delta t}\widetilde{\chi}^{n+1}\Im \langle [A^j,B_K](\phi^{n+1}-\phi^n),(\phi^{n+1}-\phi^n) \rangle,\\
    II_{n+1}=&2\sqrt{\delta t}\widetilde{\chi}^{n+1}\Im \langle [A^j,B_K](\phi^{n+1}+\phi^n),(\phi^{n+1}-\phi^n) \rangle,\\
    III_{n+1}=&4\sqrt{\delta t}\widetilde{\chi}^{n+1}\Im \langle [A^j,B_K]\phi^n,\phi^n \rangle.
  \end{align*}

  Therefore, by setting $S_{p+1}=\displaystyle\sum_{n=0}^p III_{n+1}$,
  Equation \eqref{eq:lemma_stab_lin_eq1} can be rewritten
  \begin{align}
    \label{eq:lemma_stab_lin_eq2}
    \normSigma{j}{\phi^{p+1}}^2=
    \normSigma{j}{\phi^0}^2
    +S_{p+1}
    +\displaystyle\sum_{n=0}^p (I_{n+1}+II_{n+1}).
  \end{align}

  We begin with giving some estimates on $I_n$, $II_n$ and $III_n$,
  \begin{lem}
    \label{lem:stab_lin_first_estimates}
    There exists $C(j)>0$, depending only on $j$, such that
    \begin{align*}
      \espc{\abs{I_{n+1}}^2+\abs{II_{n+1}}^2}{\mathcal{F}_n}^{1/2}&\leq
      C(j)\delta t(1+K^2\delta t^{1/2})\normSigma{j}{\phi^n}^2,\\
      \espc{\abs{III_{n+1}}^2}{\mathcal{F}_n}&\leq
      C(j)\delta t\normSigma{j}{\phi^n}^4,\\
      \espc{III_{n+1}}{\mathcal{F}_n}&=0.
    \end{align*}
  \end{lem}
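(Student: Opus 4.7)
The plan is to exploit that $\widetilde{\chi}^{n+1}$ is independent of $\mathcal{F}_n$, symmetric (being a truncation of a standard Gaussian) and hence centered with bounded moments $\esp{(\widetilde{\chi}^{n+1})^{2p}}\leq C_p$ for every $p\in\N$, together with the deterministic bound $\sqrt{\delta t}\abs{\widetilde{\chi}^{n+1}}\leq C_0(j)$ and the scheme identity
\begin{equation*}
\phi^{n+1}-\phi^n = -i\dfrac{\delta t}{2}A_K u -i\dfrac{\sqrt{\delta t}\widetilde{\chi}^{n+1}}{2}B_K u,\qquad u := \phi^{n+1}+\phi^n,
\end{equation*}
valid on $\{n+1<\tau(\delta t,C_0(j))\}$. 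The central analytic input is the bilinear commutator estimate
\begin{equation*}
\abs{\langle [A^j,B_K]u,v\rangle}\leq C(j)\normSigma{j}{u}\normSigma{j}{v},\qquad u,v\in\Sigma_K(\R^d),
\end{equation*}
uniform in $K$. This follows from \eqref{as:eq:3} combined with the anti-self-adjointness of $[A^j,B_K]$ (a consequence of \eqref{as:eq:0} and the self-adjointness of $A^j$): \eqref{as:eq:3} gives $[A^j,B_K]\colon\Sigma^{2j}\to L^2$ with norm $C(j)$, duality then yields $[A^j,B_K]\colon L^2\to\Sigma^{-2j}$ with the same bound, and complex interpolation at $\theta=1/2$ produces the bounded map $\Sigma^j\to\Sigma^{-j}$.

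For the third identity, $III_{n+1}=4\sqrt{\delta t}\widetilde{\chi}^{n+1}\Im\langle[A^j,B_K]\phi^n,\phi^n\rangle$ is the product of the centered $\mathcal{F}_n$-independent scalar $\widetilde{\chi}^{n+1}$ and an $\mathcal{F}_n$-measurable quantity, so $\espc{III_{n+1}}{\mathcal{F}_n}=0$ is immediate. For the second estimate I factor out $16\delta t\esp{(\widetilde{\chi}^{n+1})^2}\leq 16\delta t$ and apply the bilinear bound with $u=v=\phi^n$ to obtain $\abs{\Im\langle[A^j,B_K]\phi^n,\phi^n\rangle}\leq C(j)\normSigma{j}{\phi^n}^2$, then square.

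For $I_{n+1}$ I apply the bilinear bound to $u=v=\phi^{n+1}-\phi^n$ and then estimate $\normSigma{j}{\phi^{n+1}-\phi^n}$ via the scheme identity: using \eqref{as:eq:2}, the fact that $A$ has eigenvalues bounded by $O(K)$ on $\Sigma_K(\R^d)$ (so $\normSigma{j}{A_K u}\leq C(j)K\normSigma{j}{u}$ for $u\in\Sigma_K$), and Lemma~\ref{lem:C_0} to control $\normSigma{j}{u}\leq C\normSigma{j}{\phi^n}$, I obtain $\normSigma{j}{\phi^{n+1}-\phi^n}\leq C(j)K(\delta t+\sqrt{\delta t}\abs{\widetilde{\chi}^{n+1}})\normSigma{j}{\phi^n}$. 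Squaring, taking conditional expectation and integrating the (at most sixth-order) moments of $\widetilde{\chi}^{n+1}$ produces $\espc{\abs{I_{n+1}}^2}{\mathcal{F}_n}^{1/2}\leq C(j)K^2\delta t^{3/2}\normSigma{j}{\phi^n}^2$, which is dominated by $C(j)\delta t(1+K^2\delta t^{1/2})\normSigma{j}{\phi^n}^2$.

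For $II_{n+1}$ the same direct approach produces a spurious $K\delta t$ contribution that the target bound cannot absorb; this is the main obstacle, and the remedy is to substitute the scheme identity into the second slot directly, rewriting
\begin{equation*}
II_{n+1}=\delta t^{3/2}\widetilde{\chi}^{n+1}\Re\langle[A^j,B_K]u,A_K u\rangle + \delta t(\widetilde{\chi}^{n+1})^2\Re\langle[A^j,B_K]u,B_K u\rangle.
\end{equation*}
The first summand is controlled by the bilinear estimate together with $\normSigma{j}{A_K u}\leq C(j)K\normSigma{j}{u}$, contributing $K\delta t^{3/2}\abs{\widetilde{\chi}^{n+1}}\normSigma{j}{\phi^n}^2$; the second is controlled directly by Assumption~\eqref{as:eq:4}, contributing $\delta t(\widetilde{\chi}^{n+1})^2\cdot C(j)\normSigma{j}{\phi^n}^2$. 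Squaring, taking conditional expectation and using $K\leq K^2$ for $K\geq 1$, these combine into the required $C(j)\delta t(1+K^2\delta t^{1/2})\normSigma{j}{\phi^n}^2$. The crux, once the bilinear estimate is available, is precisely this decomposition: substituting the scheme identity is what unlocks \eqref{as:eq:4} and removes the spurious factor $K$.
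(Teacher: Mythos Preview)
Your proof is correct and follows essentially the same route as the paper: for $III_{n+1}$ you use independence and symmetry of $\widetilde{\chi}^{n+1}$, for $I_{n+1}$ you bound the commutator form by $\normSigma{j}{\phi^{n+1}-\phi^n}^2$ and then insert the scheme identity, and for $II_{n+1}$ you substitute the scheme identity into the second slot to produce the two pieces $i$ and $ii$, controlling the second by \eqref{as:eq:4}---exactly as the paper does. The one genuine addition in your write-up is that you make explicit, via anti-self-adjointness and complex interpolation between $\Sigma^{2j}\to L^2$ and $L^2\to\Sigma^{-2j}$, the bilinear bound $\abs{\langle[A^j,B_K]u,v\rangle}\le C(j)\normSigma{j}{u}\normSigma{j}{v}$; the paper invokes this repeatedly (e.g.\ in the estimate $\abs{i}\le C(j)\abs{\widetilde{\chi}^{n+1}}\delta t^{3/2}\normSigma{j}{\phi^{n+1/2}}\normSigma{j+2}{P_K\phi^{n+1/2}}$) but cites only \eqref{as:eq:3}, leaving the interpolation step implicit. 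So your argument is the paper's argument with that gap filled in.
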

  The proof of this technical Lemma is postponed to the end of this proof
  of Lemma \ref{lem:stability_lin}.

  We show now an estimation on $\esp{\normSigma{j}{\phi^{n+1}}^4}$ with respect to
  $\esp{\normSigma{j}{\phi^{n}}^4}$.
  First, equation \eqref{eq:lemma_stab_lin_eq1} gives
  \begin{align}
    \label{eq:stability_lin_proof1}
    \esp{\normSigma{j}{\phi^{n+1}}^4}=
    &\esp{\normSigma{j}{\phi^{n}}^4}
    +\frac{1}{16}\esp{\left(I_{n+1}+II_{n+1}+III_{n+1}\right)^2}\\
    +&\frac{1}{2}\esp{\normSigma{j}{\phi^{n}}^2\left(I_{n+1}+II_{n+1}+III_{n+1}\right)}.
  \end{align}

  Then, by using Lemma \ref{lem:stab_lin_first_estimates}, we obtain the following
  estimations for $\delta t\leq 1$,
  \begin{align}
    \label{eq:stability_lin_proof2}
    \esp{\left(I_{n+1}+II_{n+1}+III_{n+1}\right)^2}\leq
    C(j) \delta t (1+K^2\delta t^{1/2})^2 \esp{\normSigma{j}{\phi^{n}}^4},
  \end{align}

  \begin{equation}
    \label{eq:stability_lin_proof3}
    \begin{aligned}
      \esp{\normSigma{j}{\phi^{n}}^2\left(I_{n+1}+II_{n+1}+III_{n+1}\right)}
      =&\esp{\normSigma{j}{\phi^{n}}^2\left(I_{n+1}+II_{n+1}\right)}\\
      \leq&\esp{\normSigma{j}{\phi^{n}}^2 \espc{(I_{n+1}+II_{n+1})^2}{\mathcal{F}_n}^{1/2}}\\
      \leq& C(j) \delta t(1+K^2\delta t^{1/2}) \esp{\normSigma{j}{\phi^{n}}^4}.
    \end{aligned}
  \end{equation}

  Collecting \eqref{eq:stability_lin_proof1}, \eqref{eq:stability_lin_proof2}
  and \eqref{eq:stability_lin_proof3}, we obtain for $\delta t\leq 1$,
  \begin{align*}
    \esp{\normSigma{j}{\phi^{n+1}}^4}
    \leq C(j)\delta t(1+K^2\delta t^{1/2})^2\esp{\normSigma{j}{\phi^{n}}^4}.
  \end{align*}
  Then by using Gronwall's inequality, we can estimate $\esp{\normSigma{j}{\phi^{n}}^4}$
  uniformly in $n\leq N$ and $\delta t\leq 1$,
  \begin{align}
      \label{eq:lemma_stab_lin_eq3}
    \esp{\normSigma{j}{\phi^n}^4}^{1/2}\leq e^{C(j)T(1+K^2\delta t^{1/2})^2}\normSigma{j}{\phi_0}^2.
  \end{align}

  We now turn to the estimate of $\esp{\sup_{p\leq N}\normSigma{j}{\phi^p}^2}$.
  We proceed by triangular inequality by bounding first
  $\esp{\sup_{p\leq N}S_p}$ and then
  $\esp{\sup_{p\leq N} \sum_{n=0}^p (II_{n+1}+III_{n+1})}$
  in Equation \eqref{eq:lemma_stab_lin_eq2}.
  Noticing that $(S_p)_{p\in\N}$ is a $(\mathcal{F}_p)$-martingale in $L^2$,
  and using Doob's inequality,
  \begin{align*}
    \esp{\displaystyle\sup_{p\leq N}S_p}\leq 2\esp{S_N^2}^{1/2}.
  \end{align*}
  Since for all $n\in\N$ $\espc{III_{n+1}}{\mathcal{F}_n}=0$, using Lemma
  \ref{lem:stab_lin_first_estimates} and Equation \eqref{eq:lemma_stab_lin_eq3},
  \begin{align*}
    \esp{S_N^2}
    &=\displaystyle\sum_{n=0}^{N-1}\esp{III_{n+1}^2}\\
    &\leq C(j)T e^{C(j)T(1+K^2\delta t^{1/2})^2}\normSigma{j}{\phi_0}^4.
  \end{align*}
  Then,
  \begin{align}
    \label{eq:lemma_stab_lin_eq4}
    \esp{\displaystyle\sup_{p\leq N}S_p}\leq C(j)T e^{C(j)T(1+K^2\delta t^{1/2})}\normSigma{j}{\phi_0}^2.
  \end{align}

  The estimate of $\esp{\sup_{p\leq N} \sum_{n=0}^p (II_{n+1}+III_{n+1})}$
  is done using triangular inequality, Lemma \ref{lem:stab_lin_first_estimates} and
  Equation \eqref{eq:lemma_stab_lin_eq3},
  \begin{align*}
    \esp{\displaystyle\sup_{p\leq N} \displaystyle\sum_{n=0}^p II_{n+1}+III_{n+1}}^2
    &\leq \esp{\left(\displaystyle\sup_{p\leq N} \displaystyle\sum_{n=0}^p II_{n+1}+III_{n+1}\right)^2}\\
    &\leq N \esp{\displaystyle\sum_{n=0}^N (II_{n+1}+III_{n+1})^2},
  \end{align*}
  and using Lemma \ref{lem:stab_lin_first_estimates} and the uniform bound given by
  \eqref{eq:lemma_stab_lin_eq3},
  \begin{align*}
    \esp{\displaystyle\sup_{p\leq N} \displaystyle\sum_{n=0}^p II_{n+1}+III_{n+1}}^2
    \leq C(j) N^2 \delta t^2 (1+K^2\delta t^{1/2})^2 e^{C(j)T(1+K^2\delta t^{1/2})^2}\normSigma{j}{\phi_0}^2,
  \end{align*}
  that is to say,
  \begin{align}
    \label{eq:lemma_stab_lin_eq5}
    \esp{\displaystyle\sup_{p\leq N} \displaystyle\sum_{n=0}^p II_{n+1}+III_{n+1}}
    \leq C(j,T,K^4\delta t) \normSigma{j}{\phi_0}^2.
  \end{align}

  Collecting Equations \eqref{eq:lemma_stab_lin_eq2}, \eqref{eq:lemma_stab_lin_eq4} and \eqref{eq:lemma_stab_lin_eq5},
  we obtain that for $\delta t\leq 1$, there exists $C(j,T,K^4\delta t)$ such that
  \begin{align*}
    \esp{\sup_{n\leq N}\normSigma{j}{\phi^n}^2}\leq C(j,T,K^4\delta t)\normSigma{j}{\phi_0}^2.
  \end{align*}
\end{proof}

We prove now the technical Lemma \ref{lem:stab_lin_first_estimates} introduced in
the proof of the Lemma \ref{lem:stability_lin}.
\begin{proof}[Proof of Lemma \ref{lem:stab_lin_first_estimates}]
  We prove estimates on $I_{n+1}$ and $II_{n+1}$.
  Equation \eqref{as:eq:3} enables us to estimate the inner product that appears
  in $I_{n+1}$,
  \begin{align*}
    \abs{I_{n+1}}=&
    \abs{\sqrt{\delta t}\widetilde{\chi}^{n+1}\Im \langle [A^j,B_K](\phi^{n+1}-\phi^n),
    (\phi^{n+1}-\phi^n) \rangle}\\
    \leq& C(j)\sqrt{\delta t}\abs{\widetilde{\chi}^{n+1}}\normSigma{j}{\phi^{n+1}-\phi^n}^2,
  \end{align*}
  and using again the implicit relation \eqref{eq:C_0_proof}, we obtain
  \begin{align*}
    \abs{I_{n+1}}\leq C(j)\sqrt{\delta t}\abs{\widetilde{\chi}^{n+1}}\left(\normSigma{j}{\delta t A_K\phi^{n+1/2}}^2+\normSigma{j}{\sqrt{\delta t}\widetilde{\chi}^{n+1} B_K\phi^{n+1/2}}^2\right).
  \end{align*}

  Then, equation \eqref{as:eq:2}, with the fact that
  $\normSigma{j}{\phi^{n+1/2}}^2\leq \sqrt{2}\normSigma{j}{\phi^n}^2$ yields for $\delta t\leq 1$
  \begin{align*}
    \espc{I_{n+1}^2}{\mathcal{F}_n}^{1/2} \leq& C(j)\delta t(1+K^2\delta t^{1/2})\normSigma{j}{\phi^{n}}^2.
  \end{align*}

  We prove now estimate on $II_{n+1}$.
  We recall,
  \begin{align*}
    II_{n+1}=&2\sqrt{\delta t}\widetilde{\chi}^{n+1}\Im \langle [A^j,B_K](\phi^{n+1}+\phi^n),(\phi^{n+1}-\phi^n) \rangle.
  \end{align*}
  Again, we replace $\phi^{n+1}-\phi^n$ by the implicit formulation given by \eqref{eq:C_0_proof},
  \begin{align*}
    II_{n+1}
    =&\delta t^{3/2}\widetilde{\chi}^{n+1}\Re \langle [A^j,B_K](\phi^{n+1}+\phi^n),A_K(\phi^{n+1}+\phi^n)\rangle\\
    &+\delta t(\widetilde{\chi}^{n+1})^2\Re \langle [A^j,B_K](\phi^{n+1}+\phi^n),B_K(\phi^{n+1}+\phi^n)\rangle\\
    =&i+ii.
  \end{align*}
  Then, by \eqref{as:eq:3}
  \begin{align*}
    \abs{i}&\leq C(j) \abs{\widetilde{\chi}^{n+1}} \delta t^{3/2}\normSigma{j}{\phi^{n+1/2}}\normSigma{j+2}{P_K\phi^{n+1/2}}.
  \end{align*}
  Using \eqref{eq:diff_trunc} and Lemma \ref{lem:C_0}, we get,
  \begin{align*}
    \normSigma{j}{\phi^{n+1/2}} \normSigma{j+2}{P_K \phi^{n+1/2}} \leq C K \normSigma{j}{\phi^{n}}^2,
  \end{align*}
  which leads to,
   \begin{align*}
    \abs{i}
    \leq C(j) \delta t^{3/2} K \abs{\widetilde{\chi}^{n+1}}\normSigma{j}{\phi^n}^2,
   \end{align*}
   and it follows that,
  \begin{align}
    \label{eq:II-i}
    \espc{\abs{i}^2}{\mathcal{F}_n}^{1/2}\leq& C(j)\delta t^{3/2}K\normSigma{j}{\phi^n}^2.
  \end{align}
  The estimation of $ii$ follows from \eqref{as:eq:4} and Lemma \ref{lem:C_0},
  \begin{align}
    \label{eq:II-ii}
    \espc{\abs{ii}^2}{\mathcal{F}_n}^{1/2}\leq C(j) \delta t (\chi^{n+1})^2 \normSigma{j}{\phi^n}^2.\\
  \end{align}
  Eventually, collecting \eqref{eq:II-i} and \eqref{eq:II-ii} we get that
  \begin{align*}
    \espc{II_{n+1}^2}{\mathcal{F}_n}^{1/2}\leq C(j)\delta t(1+K^2\delta t^{1/2})\normSigma{j}{\phi^n}^2.
  \end{align*}
\end{proof}

\section{Convergence of the numerical scheme}
\label{sec:conv}
The main objective of this section is to prove that the solution of equation
\eqref{eq:scheme} converges to the solution of equation \eqref{eq:GP} in a probabilistic
sense that we define later.
Our numerical scheme $(\phi^n_{K,L,\delta t})_{n\leq N}$ \eqref{eq:scheme} is
parametrized by the three variables $K$, $L$ and $\delta t$.
Morally, we expect $\phi^n_{K,L,\delta t}$ to tend to $\phi(t_n)$ as we make
$\delta t$ tend to zero, and $K$ and $L$ to infinity.
The goal is now to prove this limit.
To do so, we specify the divergence of $K$ and $L$ with respect to $\delta t$.
We consider them as functions of the time step, and we denote them
by $K(\delta t)$ and $L(\delta t)$.
We are able to specify explicitly the function $K(\delta t)$, but not
the function $L(\delta t)$.
Indeed, we recall that on one hand, in order to prove stability, we had to impose
a maximal growth on $K$ with respect to $\delta t^{-1/4}$.
Now, we actually require it to grow at this speed $\delta t^{-1/4}$
(in other words we suppose that there exists $K_0,K_1>0$ such that
$K_1\leq K(\delta t)\delta t^{-1/4}\leq K_0$ for all $\delta t$).
This condition on $K(\delta t)$ is sufficient to prove the convergence.
Nevertheless Theorem \ref{thm:conv_proba} only establishes the existence
of a choice of $L(\delta t)$ for all $\delta t$ that enables to prove the convergence.
Since we do not track explicitly the dependences on $L$ in our computations,
we are not able to define explicitly this function $L(\delta t)$.
This function is chosen according to the following criteria.
First, for $\delta t$ small enough, $L(\delta t)$ must be small enough
to ensure existence of a solution for the numerical scheme.
This remark defines a maximal speed of divergence on $L(\delta t)$.
Then, since we compute all the error bounds in mean-square up to a multiplicative
constant that depends on $L$, and increasing with $L$, in Proposition
\ref{prop:conv_ms} and \ref{prop:conv_ms2}, then this parameter should
diverge slowly enough with respect to $\delta t$ to still ensure the convergences
stated by these propositions.
This last point is explained in detail in the proof of Theorem \ref{thm:conv_proba}.
\vspace{3mm}

We can now state that our convergence result of
$(\phi^n_{K(\delta t),L(\delta t),\delta t})_{n\in\N}$
toward $(\phi(t_n))_{n\in\N}$, when $\delta t$ tends to zero.
\begin{thm}
  \label{thm:conv_proba}
  We suppose Assumption \ref{as:1} and \ref{as:convergence} to be verified.
  For all $T>0$,
  $k\in\N^*$,
  $\phi_0\in\Sigma^{k+12}(\R^d)$,
  $K_0>K_1>0$,
  $C>0$,
  and all $\alpha<1$,
  there exists a choice of $L(\delta t)$ such that
  \begin{align}
    \label{eq:conv_prob}
    \displaystyle\lim_{\delta t\rightarrow 0}\sup_{n\delta t\leq T}\prob{\normSigma{k}{\phi_{K(\delta t),L(\delta t),\delta t}^n-\phi(t_n)} \geq C\delta t^\alpha}=0.
  \end{align}
\end{thm}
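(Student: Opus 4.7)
The plan is to combine a mean-square convergence statement for the truncated scheme (Propositions \ref{prop:conv_ms} and \ref{prop:conv_ms2}) with a localization argument based on the almost-sure $\Sigma^k$ regularity of the exact solution guaranteed by Proposition \ref{prop:exist_Sk}. I expect the mean-square estimate to take the form
\begin{equation*}
  \esp{\sup_{n\leq N}\normSigma{k}{\phi^n_{K(\delta t),L,\delta t}-\phi(t_n)}^2} \leq C(L,T,\phi_0)\,\delta t^{2\beta}
\end{equation*}
for some $\beta>\alpha$ (the natural Crank--Nicolson rate giving $\beta=1$), with a multiplicative constant $C(L,\cdot)$ that blows up polynomially in $L$ because of the Lipschitz constant of $g^k_L$ and the control on the scheme's moments. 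Strong order in probability for the untruncated equation will then be transferred from strong order in mean-square by letting $L$ diverge slowly enough.

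First I would introduce the localization event
\begin{equation*}
  \Omega_M=\left\{\sup_{t\in[0,T]}\normSigma{k}{\phi(t)}^2\leq M\right\}\cap\{\tau(\delta t,C_0)=N+1\}.
\end{equation*}
By Proposition \ref{prop:exist_Sk}, $\phi$ lies almost surely in $C([0,T];\Sigma^k)$, so the first factor has probability tending to $1$ as $M\to+\infty$ uniformly in $\delta t$; Lemma \ref{lem:tau_conv} makes the complement of $\{\tau(\delta t,C_0)=N+1\}$ negligible as $\delta t\to 0$ for any fixed $C_0$. For any $L>0$ I would then split, via Markov's inequality,
\begin{equation*}
  \prob{\normSigma{k}{\phi^n_{K(\delta t),L,\delta t}-\phi(t_n)}\geq C\delta t^\alpha}
  \leq \prob{\Omega_{L/2}^c}
  +\frac{\esp{\normSigma{k}{\phi^n_{K(\delta t),L,\delta t}-\phi(t_n)}^2}}{C^2\delta t^{2\alpha}},
\end{equation*}
and invoke the mean-square bound to dominate the second term by $C(L,T,\phi_0)C^{-2}\,\delta t^{2(\beta-\alpha)}$.

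The final step is to calibrate $L(\delta t)$. Since $C(L,\cdot)$ grows polynomially in $L$ of some degree $p$, choosing $L(\delta t)=\delta t^{-\gamma}$ with $0<\gamma<2(\beta-\alpha)/p$ makes $C(L(\delta t),T,\phi_0)\,\delta t^{2(\beta-\alpha)}\to 0$, while simultaneously $L(\delta t)\to+\infty$ forces $\prob{\Omega_{L(\delta t)/2}^c}\to 0$. One must also respect the upper bound on the admissible growth of $L$ coming from Proposition \ref{prop:exist_scheme}, but this constraint is harmless since $\gamma$ can be taken as small as desired. Because $n\leq N$ is controlled uniformly by the supremum inside the mean-square estimate, the bound is uniform in $n\delta t\leq T$.

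The main obstacle is twofold. The hardest technical work lies upstream, in producing the mean-square estimate itself: it requires combining the stability of Lemma \ref{lem:stability_lin} (which is precisely what forces the scaling $K(\delta t)\sim\delta t^{-1/4}$) with consistency estimates for the Crank--Nicolson discretization against the Stratonovich equation, tracking the It\^{o} correction produced by mid-point evaluation of $A_K$ and $B_K$ along the exact trajectory and the spatial truncation error $\abs{x}^2-B_K$ controlled by Assumption \ref{as:convergence}, which explains the regularity gap $j=k+12$. Downstream, the second difficulty is that the $L$-dependence of the mean-square constant is not tracked sharply enough in the analysis to pin down an explicit function $L(\delta t)$, which is precisely why the theorem asserts only the \emph{existence} of such a function.
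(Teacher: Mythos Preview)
Your overall architecture is the same as the paper's: localize on the event where the exact solution stays bounded in $\Sigma^k$ and the Brownian increments are small, then apply Markov's inequality using the mean-square estimates (Propositions \ref{prop:conv_ms} and \ref{prop:conv_ms2}) for the truncated problem. However, two steps in your write-up do not go through as stated.

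\textbf{The Markov step is applied to the wrong quantity.} You claim an estimate of the form
\[
  \esp{\normSigma{k}{\phi^n_{K(\delta t),L,\delta t}-\phi(t_n)}^2}\leq C(L,T,\phi_0)\,\delta t^{2},
\]
but Propositions \ref{prop:conv_ms} and \ref{prop:conv_ms2} compare the scheme to the \emph{truncated} solution $\phi_L$ (through the intermediate processes $\psi^n_{K,L,\delta t}$ and $\phi_{K,L}$), never to $\phi$ itself. Proposition \ref{prop:exist_Sk} gives only almost-sure continuity of $\phi$ in $\Sigma^k$, not moment bounds, so $\esp{\normSigma{k}{\phi_L(t_n)-\phi(t_n)}^2}$ is not available and your displayed inequality cannot be obtained. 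The missing ingredient is exactly what the localization is for: on the event $\{\sup_{t\leq T}\normSigma{k}{\phi(t)}^2\leq L/2\}$ one has $\phi_L\equiv\phi$, so you must replace $\phi(t_n)$ by $\phi_L(t_n)$ \emph{before} applying Markov. The paper makes this explicit by the four-way decomposition
\[
  \phi^n-\phi(t_n)=\bigl(\phi^n-\psi^n\bigr)+\bigl(\psi^n-\phi_{K,L}(t_n)\bigr)+\bigl(\phi_{K,L}(t_n)-\phi_L(t_n)\bigr)+\bigl(\phi_L(t_n)-\phi(t_n)\bigr),
\]
handling the first and third pieces by Markov plus Propositions \ref{prop:conv_ms}--\ref{prop:conv_ms2}, the second by Lemma \ref{lem:tau_conv}, and the last purely in probability via $\{\sup_t\normSigma{k}{\phi(t)}\leq L\}$.

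\textbf{The calibration $L(\delta t)=\delta t^{-\gamma}$ is not justified.} You assume the mean-square constant grows polynomially in $L$, but the Lipschitz constant of $g^k_L$ is of polynomial order in $L$ and then enters a Gronwall argument, so the dependence of $C(T,k,L,K_0)$ on $L$ is a priori exponential. With that growth your balancing $C(L(\delta t))\,\delta t^{2(1-\alpha)}\to 0$ fails. The paper does not attempt any quantitative rate in $L$: for each $\epsilon>0$ it first fixes $L$ so that $\prob{\sup_t\normSigma{k}{\phi(t)}\geq L}\leq\epsilon/4$, and only then chooses $\delta t$ small enough (depending on this $L$) to make the Markov terms $\leq\epsilon/2$. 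This diagonal selection yields the existence of $L(\delta t)$ without ever tracking how $C(\cdot,L,\cdot)$ grows, which is consistent with your own closing remark that no explicit $L(\delta t)$ can be extracted.
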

This theorem ensures that the numerical error is small for small $\delta t$ on large sets of $\Omega$.
This kind of convergence result has been studied in \cite{Chen2015} for a numerical scheme
for the stochastic cubic Schr\"{o}dinger equation with multiplicative noise of Stratonovich type.
\vspace{3mm}

In order to explain how this result is obtained, we introduce some notations.
We consider the following truncated and projected equation for a fixed $L>0$.
\begin{equation}
  \label{eq:GP_modif}
  \left\{
    \begin{aligned}
      &id\phi_{K(\delta t),L} -\lambda P_{K(\delta t)} f^k_L(\phi_{K(\delta t),L})dt
      -A_{K(\delta t)}\phi_{K(\delta t),L}dt=B_{K(\delta t)}\phi_{K(\delta t),L}\circ dW_t,\\
      &\phi_{K(\delta t),L}(0)=P_{K(\delta t)}\phi_0.
    \end{aligned}
  \right.
\end{equation}
We denote by $(\phi_{K(\delta t),L}(t))_{t\leq T}$, the solution of this equation.

We also set $(\phi_{L}(t))_{t\leq T}$, the solution of the following truncated equation
\begin{equation}
  \label{eq:GP_trunc}
  \left\{
    \begin{aligned}
      &id\phi_{L} -\lambda f^k_L(\phi_{L})dt - A \phi_{L}dt=|x|^2\phi_{L}\circ dW_t,\\
      &\phi_{L}(0)=\phi_0.
    \end{aligned}
  \right.
\end{equation}
We set now, the $\mathcal{F}_n$-measurable process
$(\psi_{K(\delta t),L,\delta t}^n)_{n\leq N}$ which is essentially the
stopped version of $(\phi_{K(\delta t),L}(t))_{t\leq T}$
at stopping time $\tau(\delta t)$, defined by
\begin{equation}
  \label{eq:psi-stopped}
  \psi_{K(\delta t),L,\delta t}^n=\left\{
  \begin{aligned}
    &P_K\phi_0,\quad\text{if}\quad n=0,\\
    &\phi_{K(\delta t),L}(t_n),\quad\text{if}\quad 0<n<\tau(\delta t),\\
    &\psi_{K(\delta t),L,\delta t}^{n-1},\quad\text{otherwise}.
  \end{aligned}
  \right.
\end{equation}
\vspace{3mm}

The convergence is obtained in several steps.
We start by proving that $(\phi^n_{K(\delta t),L,\delta t})$ can be
as close as we want (for the $L^\infty L^2_\omega \Sigma^k$ norm)
to $(\psi^n_{K(\delta t),L,\delta t})$ by choosing $\delta t$
small enough.
This result is stated in Proposition \ref{prop:conv_ms}.
Then we use the fact that $(\psi^n_{K(\delta t),L,\delta t})$ is equal to
$(\phi_{K(\delta t),L}(t_n))$ with high probability when $\delta t$ is small.
We use then the fact that $(\phi_{K(\delta t),L}(t_n))$ converges to
$(\phi_{L}(t_n))$ for the $L^\infty L^2_\omega \Sigma^k$ norm at order one in $\delta t$,
provided that $K(\delta t)$ grows at least at speed $\delta t^{-1/4}$.
This result is stated in Proposition \ref{prop:conv_ms2}.
Eventually, we use the fact that $(\phi_{L}(t_n))$ is equal to $(\phi(t_n))$
with high probability when $L$ is large.

\begin{prop}
  \label{prop:conv_ms}
  We suppose Assumption \ref{as:1} to be verified.
  For all $T>0$,
  for all $k\in\N^*$,
  for all $L\in\N^*$
  and for all $K_0>0$,
  there exists $N_0(T,k,L)\in\N^*$ and $C(T,k,L,K_0)>0$, such that,
  for all $N\geq N_0(T,k,L)$ and $\delta t=T/N$,
  for all $K\leq K_0 \delta t^{-1/4}$,
  and for all $\phi_0\in\Sigma^{k+12}(\R^d)$,
  \begin{align}
    \label{eq:conv_ms}
    \sup_{n\leq N}\esp{\normSigma{k}{\phi_{K(\delta t),L,\delta t}^n-\psi_{K(\delta t),L,\delta t}^n}^2}
    \leq C(T,k,L,K_0)\delta t^2\normSigma{k+12}{\phi_0}^2,
  \end{align}
  where $(\phi_{K(\delta t),L,\delta t}^n)_{n\leq N}$ is defined by \eqref{eq:scheme},
  and $(\psi_{K(\delta t),L,\delta t}^n)_{n\leq N}$ is defined by \eqref{eq:psi-stopped}.
\end{prop}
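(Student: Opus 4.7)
The strategy is to derive a one-step error equation for
\[
e^n := \phi^n_{K(\delta t),L,\delta t} - \psi^n_{K(\delta t),L,\delta t}
\]
and to apply a discrete Gronwall argument based on the stability of the Crank--Nicolson linear operators (Lemmas \ref{lem:C_0} and \ref{lem:stability_lin}) and the Lipschitz property of the truncated nonlinearity $g^k_L$ (Lemma \ref{lem:lips}). Since both sequences are frozen on $\{n+1 \geq \tau(\delta t)\}$ -- the scheme by construction, $\psi$ by definition \eqref{eq:psi-stopped} -- the error freezes past $\tau(\delta t)$, and it suffices to estimate increments on $\{n+1 < \tau(\delta t)\}$, where $\psi^n = \phi_{K(\delta t),L}(t_n)$ solves the full equation \eqref{eq:GP_modif} and the operators $S_{\delta t,K,n+1,j}$, $T^{-1}_{\delta t,K,n+1,j}$ coincide with their un-stopped expressions.

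The first step is to rewrite \eqref{eq:GP_modif} in It\^o form,
\[
d\phi_{K,L} = \bigl(-iA_{K}\phi_{K,L} - i\lambda P_{K} f^k_L(\phi_{K,L}) - \tfrac{1}{2} B_{K}^{2}\phi_{K,L}\bigr)dt - iB_{K}\phi_{K,L}\, dW_{t},
\]
and integrate over $[t_n,t_{n+1}]$. Adding and subtracting the Crank--Nicolson substitutes at the midpoint $\psi^{n+1/2} := (\psi^n + \psi^{n+1})/2$, one rewrites the increment as
\[
\psi^{n+1} - \psi^n = -i\delta t A_K \psi^{n+1/2} - i\sqrt{\delta t}\,\chi^{n+1} B_K \psi^{n+1/2} - i\lambda\delta t\, P_K g^k_L(\psi^n,\psi^{n+1}) + R^{n+1},
\]
where $R^{n+1}$ collects the trapezoidal-rule error on $\int A_K \phi_{K,L}$, the midpoint-quadrature error on the Stratonovich integral $\int B_K \phi_{K,L}\circ dW$, and the corresponding midpoint error on $\int P_K f^k_L(\phi_{K,L})$. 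A crucial structural point is that the It\^o correction $-\tfrac{1}{2}B_{K}^{2}\phi_{K,L}$ is precisely what cancels the leading bias of the midpoint discretisation of $\int B_K\phi_{K,L}\circ dW$, which is the very reason the Crank--Nicolson scheme is consistent with the Stratonovich product.

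To obtain sharp bounds on $R^{n+1}$ in $\Sigma^k$, I would expand $\phi_{K,L}(s)$ around $t_n$ (or $t_n + \delta t/2$) by iterated It\^o--Taylor expansion. Each further iteration applies the equation once more to $\phi_{K,L}$, introducing an extra factor of $A_K$, $B_K^2$, $P_K f^k_L$ or $B_K$ -- each costing at most two derivatives in the $\Sigma$ scale. Carrying the expansion far enough for $R^{n+1}$ to decompose into a conditionally-centred martingale-type piece of $L^2(\Omega;\Sigma^k)$ norm $O(\delta t^{3/2})$ and a drift-type piece whose $\Sigma^k$ conditional expectation is $O(\delta t^2)$, a careful bookkeeping gives the regularity requirement $\phi_0 \in \Sigma^{k+12}$. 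The necessary moment bounds
\[
\esp{\sup_{s\leq T}\normSigma{j}{\phi_{K,L}(s)}^{2p}} \leq C(T,j,p,L)\normSigma{j}{\phi_0}^{2p},\quad\text{uniform in }K,
\]
follow from It\^o's lemma applied to \eqref{eq:GP_modif} and a Gronwall argument, using Assumption \ref{as:1} on $B_K$ and Lemma \ref{lem:lips} for $f^k_L$. The $K$-dependence of $|||B_K|||_{\mathcal{L}(\Sigma^k,\Sigma^k)}$ from \eqref{as:eq:2} is absorbed by the scaling $K \leq K_0\delta t^{-1/4}$, which turns every spurious factor $K$ into $\delta t^{-1/4}$, harmless against the available powers of $\delta t$ in the remainder.

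Finally, applying $T^{-1}_{\delta t,K,n+1,j}$ to the identity yields
\[
e^{n+1} = S_{\delta t,K,n+1,j} e^n - i\lambda\delta t\, T^{-1}_{\delta t,K,n+1,j} P_K\bigl[g^k_L(\phi^n,\phi^{n+1}) - g^k_L(\psi^n,\psi^{n+1})\bigr] - T^{-1}_{\delta t,K,n+1,j} R^{n+1}.
\]
Squaring the $\Sigma^k$-norm and taking expectation, one bounds $S$ and $T^{-1}$ using Lemma \ref{lem:C_0}, the nonlinear increment by $C(L,k)\delta t\bigl(\normSigma{k}{e^n}+\normSigma{k}{e^{n+1}}\bigr)$ via Lemma \ref{lem:lips}, and splits $R^{n+1}$ into its drift and martingale parts. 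The same discrete-It\^o computation that proves Lemma \ref{lem:stability_lin} applies to the error and yields $\esp{\normSigma{k}{e^{n+1}}^2} \leq (1+C\delta t)\esp{\normSigma{k}{e^n}^2} + C\delta t^3\normSigma{k+12}{\phi_0}^2$ after using orthogonality of the martingale part of $R^{n+1}$ (so that $\sum_n \delta t^3$ contributes $O(\delta t^2)$ rather than $O(\delta t)$). Discrete Gronwall gives the claimed bound $\sup_n \esp{\normSigma{k}{e^n}^2} \leq C(T,k,L,K_0)\delta t^2 \normSigma{k+12}{\phi_0}^2$. The main obstacle is the It\^o--Taylor bookkeeping: one must simultaneously ensure that (a) the residual actually reaches the $O(\delta t^{3/2})$ martingale / $O(\delta t^2)$ drift size required for strong order one, (b) all the derivatives of $\phi_{K,L}$ brought down by the expansion remain controlled uniformly in $K$ by $\normSigma{k+12}{\phi_0}$, and (c) every $K$-dependent loss coming from \eqref{as:eq:2} is neutralised by the prescribed scaling $K \leq K_0\delta t^{-1/4}$.
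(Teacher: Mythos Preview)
Your strategy is essentially the paper's: write the one-step error increment as the Crank--Nicolson operator acting on $e^{n+1/2}$ plus a local truncation remainder $R^{n+1}$ that splits into a conditionally centred piece of $L^2(\Omega;\Sigma^k)$ size $O(\delta t^{3/2})$ and a drift piece of size $O(\delta t^2)$, then run the energy (discrete-It\^o) identity $\normSigma{k}{e^{n+1}}^2-\normSigma{k}{e^n}^2=2\Re\langle e^{n+1}-e^n,A^k e^{n+1/2}\rangle$ and Gronwall. Two points to tighten. First, your phrase ``squaring the $\Sigma^k$-norm and bounding $S$ via Lemma~\ref{lem:C_0}'' would, if taken literally, only give $\normSigma{k}{Se^n}^2\leq 2\normSigma{k}{e^n}^2$ and kill the Gronwall; the actual mechanism---as you then say---is the commutator computation of Lemma~\ref{lem:stability_lin}, so drop the first formulation. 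Second, and more substantively, in that commutator analysis you meet terms like $(\chi^{n+1})^2\normSigma{k}{e^{n+1/2}}^2$ where $e^{n+1/2}$ is \emph{not} $\mathcal{F}_n$-measurable, so you cannot factor the expectation; the paper handles this by first proving an \emph{almost sure} one-step bound $\normSigma{k}{e^{n+1}}^2\leq C(j,L)\bigl(\normSigma{k}{e^n}^2+\normSigma{k}{R^{n+1}}^2\bigr)$ on $\{\tau(\delta t)>n+1\}$---this is precisely where the truncation of the Brownian increments is essential---and only then carries out the $I+II+III$ splitting. Your sketch implicitly needs this a.s.\ bound; once it is in place, the rest of your outline goes through exactly as in the paper.
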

\begin{prop}
  \label{prop:conv_ms2}
  We suppose Assumption \ref{as:1} and \ref{as:convergence} to be verified.
  For all $T>0$,
  for all $k\in\N^*$,
  for all $L\in\N^*$
  and for all $K_1>0$,
  there exists $C(T,k,L,K_1)>0$, such that,
  for all $\delta t=T/N$,
  for all $K\geq K_1 \delta t^{-1/4}$,
  and for all $\phi_0\in\Sigma^{k+12}(\R^d)$,
  \begin{align}
    \label{eq:conv_ms2}
    \sup_{n\leq N}\esp{\normSigma{k}{\phi_{K(\delta t),L}(t_n)-\phi_{L}(t_n)}^2}
    \leq C(T,k,L,K_1)\delta t^2\normSigma{k+12}{\phi_0}^2.
  \end{align}
\end{prop}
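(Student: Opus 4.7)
The plan is to derive an SPDE for the error $e(t)=\phi_{K,L}(t)-\phi_L(t)$, apply It\^o's formula to $\normSigma{k}{e(t)}^2$, and close by Gronwall's inequality. Since $\phi_{K,L}(t)\in\Sigma_K(\R^d)$ for every $t$, we have $A_K\phi_{K,L}=A\phi_{K,L}$, so subtracting \eqref{eq:GP_modif} from \eqref{eq:GP_trunc} gives
\begin{align*}
  i\,de = Ae\,dt + \lambda\bigl[P_K f^k_L(\phi_{K,L})-f^k_L(\phi_L)\bigr]dt + \bigl[B_K\phi_{K,L}-\abs{x}^2\phi_L\bigr]\circ dW_t,
\end{align*}
with $e(0)=(P_K-\mathrm{Id})\phi_0$. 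Thanks to \eqref{eq:diff_trunc} and $K\geq K_1\delta t^{-1/4}$, the initial error already satisfies $\normSigma{k}{e(0)}^2\leq C\delta t^2\normSigma{k+12}{\phi_0}^2$. I would split each source term into a diagonal part and an error part: setting $r=(B_K-\abs{x}^2)\phi_L$, one has $B_K\phi_{K,L}-\abs{x}^2\phi_L=B_K e + r$, and similarly $P_K f^k_L(\phi_{K,L})-f^k_L(\phi_L)=P_K[f^k_L(\phi_{K,L})-f^k_L(\phi_L)]+(P_K-\mathrm{Id})f^k_L(\phi_L)$.

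I then convert the equation to It\^o form, which adds the drift $-\tfrac{1}{2}(B_K^2\phi_{K,L}-\abs{x}^4\phi_L)\,dt$, and apply It\^o's formula to $\normSigma{k}{e}^2=\langle A^k e, e\rangle$. The term $-iAe$ contributes nothing because $A$ is self-adjoint. Lemma~\ref{lem:lips} controls the Lipschitz piece of the nonlinearity by $C(L,k)\normSigma{k}{e}^2$, and the projection remainder $(P_K-\mathrm{Id})f^k_L(\phi_L)$ is $O(K^{-p/2})$ in $\Sigma^k$ thanks to \eqref{eq:diff_trunc}. The crucial step, which is the main obstacle, is the combination of the It\^o correction $-\Re\langle A^k e,B_K^2\phi_{K,L}-\abs{x}^4\phi_L\rangle$ with the quadratic variation $\normSigma{k}{B_K\phi_{K,L}-\abs{x}^2\phi_L}^2$. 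Expanding both using $e$, $r$ and $(B_K^2-\abs{x}^4)\phi_L$, the potentially divergent $K$-dependent piece $\normSigma{k}{B_K e}^2-\Re\langle A^k e,B_K^2 e\rangle$ collapses, via self-adjointness of $B_K$ \eqref{as:eq:0}, to $\Re\langle [A^k,B_K]e,B_K e\rangle$, which by \eqref{as:eq:4} is bounded by $C(k)\normSigma{k}{e}^2$. This is the transposition to the error equation of the cancellation used in Lemma~\ref{lem:stability_lin} and Remark~\ref{rem:stab_lin}.

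The non-diagonal terms remaining are error contributions involving $r$ and $(B_K^2-\abs{x}^4)\phi_L$. I would write $(B_K^2-\abs{x}^4)\phi_L=B_K r + (B_K-\abs{x}^2)\abs{x}^2\phi_L$ and bound each factor of $(B_K-\abs{x}^2)$ using Assumption~\ref{as:convergence} with $p=10$. This yields $\normSigma{k}{r}^2\leq CK^{-10}\normSigma{k+12}{\phi_L}^2$ and, after a Young splitting together with $\normSigma{k}{B_K e}\leq CK\normSigma{k}{e}$ from \eqref{as:eq:2}, the cross term $|2\Re\langle A^k B_K e,r\rangle|$ is bounded by $\normSigma{k}{e}^2 + CK^{-8}\normSigma{k+12}{\phi_L}^2$. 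The term coming from the It\^o correction requires one additional application of $\abs{x}^2$ (or, equivalently, the use of \eqref{as:eq:1} on $B_K r$), which is precisely what produces the index $k+p+2=k+12$ in the hypothesis. Using $K\geq K_1\delta t^{-1/4}$, all these errors are $O(\delta t^2)$.

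Finally, taking expectation kills the martingale part, and Gronwall's inequality applied to the $\mathcal{F}_t$-adapted quantity $\esp{\normSigma{k}{e(t)}^2}$ yields
\begin{align*}
  \sup_{t\leq T}\esp{\normSigma{k}{e(t)}^2}\leq C(T,k,L,K_1)\Bigl(\esp{\normSigma{k}{e(0)}^2}+\delta t^2\sup_{t\leq T}\esp{\normSigma{k+12}{\phi_L(t)}^2}\Bigr).
\end{align*}
The a priori bound $\sup_{t\leq T}\esp{\normSigma{k+12}{\phi_L(t)}^2}\leq C(T,k,L)\normSigma{k+12}{\phi_0}^2$ is obtained by arguing as in Proposition~\ref{prop:exist_Sk}, with the globally Lipschitz truncation $f^k_L$ simplifying the fixed-point step. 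Restricting to the discrete times $t_n$ gives \eqref{eq:conv_ms2}.
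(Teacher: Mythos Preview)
Your proposal is correct and follows essentially the same route as the paper: derive the error equation, apply It\^o's formula to $\normSigma{k}{e}^2$, use the commutator cancellation $\normSigma{k}{B_K e}^2-\Re\langle A^k e,B_K^2 e\rangle=\Re\langle[A^k,B_K]e,B_K e\rangle$ together with \eqref{as:eq:4}, bound the remainder terms via Assumption~\ref{as:convergence}, and close by Gronwall. The paper organizes the computation slightly differently---it applies the two-variable It\^o formula to $(\phi_L,\phi_{K,L})\mapsto\normSigma{k}{\phi_L-\phi_{K,L}}^2$ rather than first forming the error SPDE, and it handles the cross term $\langle A^k B_K e,r\rangle$ by the commutator identity $A^k B_K=B_K A^k+[A^k,B_K]$ combined with \eqref{as:eq:1}, whereas you use \eqref{as:eq:2} at the cost of one extra power of $K$ compensated by taking $p=10$ instead of $p=8$---but these are cosmetic variations of the same argument.
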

The proof of Proposition \ref{prop:conv_ms} is quite technical.
It is done by controlling the numerical error at one step by the error at the previous
step, in order to use Gronwall's inequality to conclude.
This proof uses extensively the boundedness of operators $B_K$ as in
Lemma \ref{lem:stability_lin}.
The proof of Proposition \ref{prop:conv_ms2} relies on Itô's lemma,
Assumption \ref{as:convergence} and the fact that $K$ grows as speed $\delta t^{-1/4}$.
\vspace{3mm}

We prove now first Theorem \ref{thm:conv_proba}, and then Propositions \ref{prop:conv_ms}
and \ref{prop:conv_ms2}.
The proof of Theorem \ref{thm:conv_proba} relies on these two propositions.
We choose to show this proof first since the arguments are quite simple and follow on
from the convergence results of Propositions \ref{prop:conv_ms} and \ref{prop:conv_ms2}.
\begin{proof}[Proof of Theorem \ref{thm:conv_proba}]

  Our goal is to show now that for all $\epsilon>0$ there exists a pair $(\delta t_1,L)$
  such that $\delta t_1\leq \delta t_0(j,L,K_0)$ and for all $\delta t\leq \delta t_1$,
  \begin{align*}
    \prob{\normSigma{k}{\phi_{K(\delta t),L,\delta t}^n-\phi(t_n)} \geq C\delta t^\alpha}
    \leq \epsilon.
  \end{align*}
  Then, we can set $L(\delta t_1)$ to be such a $L$ associated with $\delta t_1$.

  For $L>0$ and $\delta t\leq \delta t_0(j,L,K_0)$, we upper bound $\prob{\normSigma{k}{\phi_{K(\delta t),L,\delta t}^n-\phi(t_n)\geq C\delta t^\alpha}}$
  in the following way,
  \begin{align*}
      \prob{\normSigma{k}{\phi_{K(\delta t),L,\delta t}^n-\phi(t_n)} \geq C\delta t^\alpha}
      \leq
      &\prob{\normSigma{k}{\phi_{K(\delta t),L,\delta t}^n-\psi_{K(\delta t),L,\delta t}^n} \geq C\delta t^\alpha/4}\\
      &+\prob{\normSigma{k}{\psi_{K(\delta t),L,\delta t}^n-\phi_{K(\delta t),L}(t_n)} \geq C\delta t^\alpha/4}\\
      &+\prob{\normSigma{k}{\phi_{K(\delta t),L}(t_n)-\phi_{L}(t_n)} \geq C\delta t^\alpha/4}\\
      &+\prob{\normSigma{k}{\phi_{L}(t_n)-\phi(t_n)} \geq C\delta t^\alpha/4}\\
      \leq& i+ii+iii+iv.
  \end{align*}

  Choosing $L$ large enough ensures that
  $\prob{\sup_{n\leq N}\normSigma{k}{\phi(t_n)}\geq L}\leq \epsilon/4$
  since $(\phi(t))_{t\geq 0}\in\mathcal{C}(\R^+,\Sigma^k)$.
  Then it comes $iv\leq \epsilon/4$.

  At the cost of supposing $\delta t$ even smaller, we can assume that
  $\prob{\tau(\delta t)\leq N}\leq \epsilon/4$, which leads to $ii\leq \epsilon/4$.
  This follows from Lemma \ref{lem:tau_conv}.

  We now use Proposition \ref{prop:conv_ms} and \ref{prop:conv_ms2} with Markov's inequality to choose
  $C$ such that $i+iii\leq \epsilon/2$:
  \begin{align*}
    i+iii\leq&
    \dfrac{1}{C^2\delta t^{2\alpha}}
    \left(
    \esp{\normSigma{k}{\phi_{K(\delta t),L,\delta t}^n-\psi_{K(\delta t),L,\delta t}^n}^2}
    +\esp{\normSigma{k}{\phi_{K(\delta t),L}(t_n)-\phi_{L}(t_n)}^2}
    \right)\\
    \leq&
    \dfrac{\delta t^{2(1-\alpha)}}{C^2}\normSigma{k+12}{\phi_0}^2\left(C(T,k,L,K_0)+C(T,k,L,K_1)\right).
  \end{align*}
  Since $\delta t^{2(1-\alpha)}$ tends to zero when $\delta t$ tends to zero,
  then it is possible to take $\delta t$ even smaller to ensure that $i+iii\leq \epsilon/2$.

  It is clear that for all $\epsilon$ we can construct a pair $(\delta t_1,L(\delta t_1))$
  with $\delta t_1$ that tends to zero when $\epsilon$ tends to zero, and such that
  \begin{align*}
    \prob{\normSigma{k}{\phi_{K(\delta t_1),L(\delta t_1),\delta t_1}^n-\phi(t_n)} \geq C\delta t_1^\alpha}
    \leq \epsilon.
  \end{align*}
  It implies the result,
  \begin{align*}
    \lim_{\delta t\to 0} \prob{\normSigma{k}{\phi_{K(\delta t),L(\delta t),\delta t}^n-\phi(t_n)} \geq C\delta t^\alpha}
    = 0.
  \end{align*}
\end{proof}
\vspace{3mm}

\begin{proof}[Proof of Proposition \ref{prop:conv_ms}]

To simplify notations, we get rid of the subscripts $K(\delta t)$, $L$ and $\delta t$
in this step.
For example, we denote $\phi^n_{K(\delta t),L,\delta t}$ by $\phi^n$.
We also use $g$ and $f$ instead of $g_L^k$ and $f_L^k$ defined by \eqref{eq:f_L}
and \eqref{eq:f_L2}.
We set for all $n\leq N$, $e^n=\phi^n-\psi^n$.

Our goal is to show that there exists $C(T,k,L,K_0)>0$, such that,
\begin{align}
  \label{eq:goal_th1}
  \esp{\normSigma{k}{e^{n+1}}^2}
  \leq (1+\delta t C(T,k,L,K_0))\esp{\normSigma{k}{e^{n}}^2}
  +\delta t^3 C(T,k,L,K_0)\normSigma{k+12}{\phi_0}^2.
\end{align}
This will enable us to conclude this proposition by using the discrete Gronwall's Lemma.
To show Equation \eqref{eq:goal_th1}, we are going to show that,
\begin{align}
  \label{eq:goal_th1bis}
  \abs{\Re\esp{\langle e^{n+1}-e^n,A^k e^{n+1/2}\rangle}}
  \leq C(T,k,L,K_0)\delta t\left(\esp{\normSigma{k}{e^{n}}^2}
  +\delta t^2\normSigma{k}{\phi_0}^2\right).
\end{align}
To prove these estimation, we begin with splitting $e^{n+1}-e^n$ as
a sum of several terms, to make clear how each of these terms are small.

Using Equations \eqref{eq:psi-stopped}, \eqref{eq:GP_modif} and \eqref{eq:scheme},
we get that
\begin{align*}
  \phi^{n+1}=\phi^n + \indic{\tau(\delta t)>n+1}\left(
  -i\delta t A_K \phi^{n+1/2}
  -i\sqrt{\delta t}\chi^{n+1} B_K \phi^{n+1/2}
  -i\delta t P_K g(\pi^{n+1},\phi^n)
  \right),
\end{align*}
and
\begin{align*}
  \psi(t_{n+1})=\psi(t_{n})+ \indic{\tau(\delta t)>n+1}\left(
  -i\int_{t_n}^{t_{n+1}} A_K \phi(s)ds
  -i\int_{t_n}^{t_{n+1}} B_K \phi(s)\circ dW_s
  -i\int_{t_n}^{t_{n+1}} P_K f(\phi(s))ds
  \right).
\end{align*}
We split $e^{n+1}-e^n$ in the following way,
\begin{align}
  \label{eq:decomp_e}
    e^{n+1}=e^n+\indic{\tau(\delta t)>n+1}\left(a_1+a_2+a_3\right),
\end{align}
with
\begin{align}
  \label{eq:e_n+1}
  a_1&=-i\delta tA_K\phi^{n+1/2}+i\displaystyle\int_{t_n}^{t_{n+1}}A_K\phi(s)ds,\\
  a_2&=-i\sqrt{\delta t}\chi^{n+1}B_K\phi^{n+1/2}+i\displaystyle\int_{t_n}^{t_{n+1}}B_K\phi(s)\circ dW_s,\\
  a_3&=-i\delta t P_K g(\phi^{n+1},\phi^{n})+i\displaystyle\int_{t_n}^{t_{n+1}} P_K f(\phi(s))ds.\\
\end{align}
$a_1$ comes from the term $A_K\phi_L dt$ in \eqref{eq:GP_trunc},
$a_2$ comes from the term $B_K\phi_L\circ dW_t$,
and $a_3$ comes from the nonlinear term.
We now split these terms again to make clear how we are going to prove their \emph{smallness}.
We set $a_1=a_{1,1}+a_{1,2}+a_{1,3}$ with,
\begin{align*}
  a_{1,1}&=-i\delta t A_K e^{n+1/2},\\
  a_{1,2}&=-\dfrac{i}{2}\delta t A_K (\phi(t_{n+1})-\phi(t_n)),\\
  a_{1,3}&=i\displaystyle\int_{t_n}^{t_{n+1}}A_K(\phi(s)-\phi(t_n))ds.
\end{align*}
We define $R(t,s)$ for $t\geq s$ by
\begin{align}
  \label{eq:Rts}
R(t,s)=\displaystyle\int_{s}^{t}A_K\phi(r)dr
+\dfrac{1}{2}\displaystyle\int_{s}^{t}B_K^2\phi(r)dr
+\displaystyle\int_{s}^{t}P_K f(\phi(r))dr.
\end{align}
$a_2$ can be split in the following way.
\begin{align*}
  a_2=&-i\sqrt{\delta t}\chi^{n+1}B_Ke^{n+1/2}
  -i\dfrac{\sqrt{\delta t}}{2}\chi^{n+1} B_K(\phi(t_{n+1})-\phi(t_n))
  +i\int_{t_n}^{t_{n+1}} B_K (\phi(s)-\phi(t_n)) \circ dW_s\\
  =&-i\sqrt{\delta t}\chi^{n+1}B_Ke^{n+1/2}
  +\dfrac{1}{2} \int_{t_n}^{t_{n+1}} B_K^2 (\phi(s)-\phi(t_n)) ds
  -i\dfrac{\sqrt{\delta t}}{2}\chi^{n+1} B_K(\phi(t_{n+1})-\phi(t_n))\\
  &+i\int_{t_n}^{t_{n+1}} B_K (\phi(s)-\phi(t_n)) dW_s.
\end{align*}
The last equality is just the It\^{o} form of the first one.
We split again the two last terms in the right-hand side of the second equality
by using again Equation
\eqref{eq:GP_modif} to explicit $\phi(t_{n+1})-\phi(t_n)$ and $\phi(s)-\phi(t_n)$.
Then, we can split $a_2$ by setting $a_2=a_{2,1}+a_{2,2}+a_{2,3}+a_{2,4}$ with,
\begin{align*}
  a_{2,1}&=-i\sqrt{\delta t}\chi^{n+1}B_Ke^{n+1/2},\\
  a_{2,2}&=-\dfrac{\sqrt{\delta t}}{2}\chi^{n+1}B_K\displaystyle\int_{t_n}^{t_{n+1}}B_K(\phi(s)-\phi(t_n))dW_s
  +\displaystyle\int_{t_n}^{t_{n+1}}B_K\displaystyle\int_{t_n}^{s}B_K(\phi(r)-\phi(t_n))dW_r dW_s,\\
  a_{2,3}&=\dfrac{1}{2}\displaystyle\int_{t_n}^{t_{n+1}}B_K^2(\phi(s)-\phi(t_n))ds,\\
  a_{2,4}&=-\dfrac{\sqrt{\delta t}}{2}\chi^{n+1}B_KR(t_{n+1},t_n)+\displaystyle\int_{t_n}^{t_{n+1}}B_KR(s,t_n)dW_s.
\end{align*}
We set $a_3=a_{3,1}+a_{3,2}+a_{3,3}$ with,
\begin{align*}
  a_{3,1}&=-i\delta t P_K\left( g(\phi^{n+1},\phi^n)-g(\phi(t_{n+1}),\phi(t_n)) \right),\\
  a_{3,2}&=-i\delta t P_K \left( g(\phi(t_{n+1}),\phi(t_n))-f(\phi(t_n)) \right),\\
  a_{3,3}&=i\displaystyle\int_{t_n}^{t_{n+1}} P_K \left( f(\phi(s))-f(\phi(t_n)) \right)ds.
\end{align*}

Terms $a_{1,2}$, $a_{1,3}$, $a_{2,2}$, $a_{2,3}$, $a_{2,4}$, $a_{3,2}$ and $a_{3,3}$
have similar \emph{smallness} properties. This is why we set $f$ defined by
\begin{align*}
  f=a_{1,2}+a_{1,3}+a_{2,2}+a_{2,3}+a_{2,4}+a_{3,2}+a_{3,3}.
\end{align*}

Then, Equation \eqref{eq:decomp_e} can be written as
\begin{align}
  \label{eq:decomp_e2}
  e^{n+1}-e^n=\indic{\tau(\delta t)>n+1}\left(a_{1,1}+a_{2,1}+a_{3,1}+f\right).
\end{align}
Plugging this expression of $e^{n+1}-e^n$ into Equation \eqref{eq:goal_th1bis}
and using the triangular inequality it becomes clear that,
in order to show the estimation \eqref{eq:goal_th1bis},
it is enough to show that for all $1\leq p\leq 3$,
\begin{align}
  \label{eq:ggoal1}
  \abs{\Re\esp{\indic{\tau(\delta t)>n+1}\langle a_{p,1},A^k e^{n+1/2}\rangle}}
  \leq C(k,j,T)\delta t\left(\esp{\normSigma{k}{e^{n}}^2}+\delta t^2\normSigma{k}{\phi_0}^2\right),
\end{align}
and
\begin{align}
  \label{eq:ggoal2}
  \abs{\Re\esp{\indic{\tau(\delta t)>n+1}\langle f,A^k e^{n+1/2}\rangle}}
  \leq C(k,j,T)\delta t\left(\esp{\normSigma{k}{e^{n}}^2}+\delta t^2\normSigma{k}{\phi_0}^2\right).
\end{align}
If we try instead to take the square of each side of equation \eqref{eq:decomp_e},
then we won't be able to estimate properly $\esp{\normSigma{k}{a_2}^2}$.

Before starting to prove Equations \eqref{eq:ggoal1} and \eqref{eq:ggoal2},
we explain why we chose to decompose $e^{n+1}-e^n$ as in Equation \eqref{eq:decomp_e2}.
As we said, it comes from the \emph{smallness} properties of $f$,
which is quantified by the following Lemma,
\begin{lem}
  \label{lem:conv_lem3}
  There exists $C(L,j,T)>0$ such that,
  \begin{align*}
    \esp{\normSigma{j}{f}^4}^{1/2}\leq C(L,j,T)\delta t^3\normSigma{j+8}{\phi_0}^2.
  \end{align*}
\end{lem}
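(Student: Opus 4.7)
The plan is to bound $\esp{\normSigma{j}{f}^4}^{1/4}$ by invoking Minkowski's inequality: since $f$ is the sum of seven terms, it suffices to show that each summand $a$ satisfies $\esp{\normSigma{j}{a}^4}^{1/4}\leq C(L,j,T)\delta t^{3/2}\normSigma{j+8}{\phi_0}$. Two preliminary ingredients are crucial throughout. First, a uniform-in-$K$ a priori bound $\esp{\sup_{t\leq T}\normSigma{j+c}{\phi_{K,L}(t)}^p}\leq C(L,j,T,p)\normSigma{j+c}{\phi_0}^p$ for the solution of \eqref{eq:GP_modif} (with $c\in\{4,6,8\}$ as needed), obtained from It\^{o}'s formula applied to $\normSigma{j+c}{\phi_{K,L}}^p$ combined with Assumption \ref{as:1} -- in particular \eqref{as:eq:4}, which controls the quadratic variation generated by the noise -- together with Lemma \ref{lem:lips} and Gronwall. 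Second, the derived time-increment estimate $\esp{\normSigma{j+c}{\phi_{K,L}(s)-\phi_{K,L}(t)}^p}^{1/p}\leq C(L,j,T,p)|s-t|^{1/2}\normSigma{j+c+2}{\phi_0}$, obtained by integrating \eqref{eq:GP_modif} between $s$ and $t$, applying Burkholder-Davis-Gundy to the martingale part and using the Lipschitz truncation of Lemma \ref{lem:lips} on the nonlinear contribution.

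For the five Bochner-type pieces $a_{1,2}, a_{1,3}, a_{2,3}, a_{3,2}, a_{3,3}$, each carries either an explicit prefactor $\delta t$ (in $a_{1,2}, a_{3,2}$) or a time integral of length $\delta t$ which yields an extra $\delta t$ via Jensen's inequality inside the fourth moment (in $a_{1,3}, a_{2,3}, a_{3,3}$). The integrand is in each case $O(\sqrt{\delta t})$ in the appropriate $\Sigma$-norm because it is an increment of $\phi_{K,L}$, composed with $A_K$ (costing two derivatives via \eqref{as:eq:1} in $a_{1,2}, a_{1,3}$), $B_K^2$ (costing four derivatives in $a_{2,3}$), or the truncated nonlinearity $f_L^k, g_L^k$ (in $a_{3,2}, a_{3,3}$, where the Lipschitz estimate from Lemma \ref{lem:lips} transfers the $\sqrt{\delta t}$ size from the increment to the nonlinearity). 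Multiplying the $\delta t$ prefactor by the $\sqrt{\delta t}$ from the increment bound yields $\delta t^{3/2}$, with the derivative loss absorbed by the a priori bound on $\phi_0$.

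For the two genuinely stochastic pieces $a_{2,2}$ and $a_{2,4}$ I would apply Burkholder-Davis-Gundy. The piece $a_{2,2}$ consists of the product $\tfrac{1}{2}\sqrt{\delta t}\chi^{n+1}\int_{t_n}^{t_{n+1}}B_K(\phi_{K,L}(s)-\phi_{K,L}(t_n))\,dW_s$ and the iterated It\^{o} integral $\int_{t_n}^{t_{n+1}}B_K\int_{t_n}^{s}B_K(\phi_{K,L}(r)-\phi_{K,L}(t_n))\,dW_r\,dW_s$. In both cases the innermost integrand, being an increment of $\phi_{K,L}$, is $O(\sqrt{\delta t})$ in $L^p_\omega$, and BDG together with H\"{o}lder's inequality (to handle the product with the Brownian increment in the first sub-piece) extracts a further $\delta t$ from the two Brownian-type factors, giving the target $\delta t^{3/2}$. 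For $a_{2,4}$ one first observes that $R(t,s)$, being a Bochner integral of $A_K\phi_{K,L}$, $\tfrac{1}{2}B_K^2\phi_{K,L}$ and $P_K f_L^k(\phi_{K,L})$, satisfies $\normSigma{j+2}{R(t,s)}\leq C(L,j,T)|t-s|\normSigma{j+6}{\phi_0}$ in every $L^p_\omega$; multiplying by $\sqrt{\delta t}\chi^{n+1}=O(\sqrt{\delta t})$ for the first sub-piece, and applying BDG (which provides an extra $\sqrt{\delta t}$) for the second, again yields $\delta t^{3/2}$.

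The main obstacle is the bookkeeping of derivative losses. Each application of $A_K$ or $B_K$ costs two derivatives by \eqref{as:eq:1}, and the proof of the increment bound on $\phi_{K,L}(s)-\phi_{K,L}(t_n)$ in $\Sigma^{j+c}$ requires a priori control of $\phi_{K,L}$ (hence of $\phi_0$) in $\Sigma^{j+c+2}$. These losses compound in the nested structure of $a_{2,2}$ -- an outer $B_K$, then an inner $B_K$ sitting inside an It\^{o} integral of an increment whose bound absorbs two further derivatives -- to reach a total of eight, which is precisely why the hypothesis $\phi_0\in\Sigma^{j+8}$ appears. Summing the seven estimates via Minkowski's inequality then yields the claim.
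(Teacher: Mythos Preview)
Your proposal is correct and matches the paper's approach: the paper simply records the a priori bound and the increment/$R(t,s)$ estimates as Lemmas~\ref{lem:conv_lem1} and~\ref{lem:conv_lem2} and declares the result a direct consequence, while you have spelled out how these two ingredients handle each of the seven pieces of $f$. One minor bookkeeping point: the increment estimate actually costs four derivatives rather than the two you quote (the It\^{o} form of \eqref{eq:GP_modif} contains a $B_K^2$ drift term), and it is this that brings the worst cases $a_{2,2}$ and $a_{2,3}$ up to the stated $j+8$.
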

\begin{proof}[Proof of Lemma \ref{lem:conv_lem3}]
The proof of this Lemma is a direct consequence of Lemmas
\ref{lem:conv_lem1} and \ref{lem:conv_lem2}.
\end{proof}
\begin{lem}
  \label{lem:conv_lem1}
  For all $T>0$, $L\in\N^*$, $K(\delta t)\in\N^*$, $j\in\N$ if $\phi_0\in\Sigma^j(\R^d)$,
  then $\phi_{K(\delta t),L}\in L^{\infty}(0,T;L^4_\omega\Sigma^j(\R^d))$,
  where $\phi_{K(\delta t),L}$ is the solution of \eqref{eq:GP_modif},
  and
  there exists $C(T,L,j)>0$, independent of $\phi_0$, such that,
  \begin{align*}
    \displaystyle\sup_{t\leq T}\esp{\normSigma{j}{\phi_{K(\delta t),L}(t)}^4}^{1/2}
    \leq C(T,L,j) \normSigma{j}{\phi_0}^2.
  \end{align*}
\end{lem}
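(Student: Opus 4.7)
Equation \eqref{eq:GP_modif} is a finite-dimensional SDE on $\Sigma_K(\R^d)$, since $A_K$, $B_K$ and $P_K f_L^k(\cdot)$ all take values in that space. Converting the Stratonovich formulation to the It\^{o} one produces $d\phi = \mu\,dt + \sigma\,dW_t$ with $\mu = -iA_K\phi - i\lambda P_K f_L^k(\phi) - \tfrac{1}{2}B_K^2\phi$ (the last term being the standard Stratonovich-to-It\^{o} correction $\tfrac{1}{2}\sigma'(\phi)\sigma$) and $\sigma = -iB_K\phi$. Because $f_L^k$ is globally Lipschitz on $\Sigma^k$ by Lemma \ref{lem:lips} and every other piece is a bounded linear operator on the finite-dimensional space $\Sigma_K$, the coefficients are globally Lipschitz and classical SDE theory supplies a unique global strong solution with continuous sample paths in $\Sigma_K\subset\Sigma^j$.

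For the quantitative estimate I would apply It\^{o}'s formula to $X_t:=\normSigma{j}{\phi_{K(\delta t),L}(t)}^2=\langle A^j \phi_{K(\delta t),L}(t),\phi_{K(\delta t),L}(t)\rangle$. The contribution of $-iA_K\phi$ vanishes because $\langle A^j\phi, A_K\phi\rangle$ is real (recall $A_K\phi=A\phi$ on $\Sigma_K$). The contributions of the extra drift $-\tfrac{1}{2}B_K^2\phi$ and of the quadratic-variation term $\normSigma{j}{\sigma}^2=\normSigma{j}{B_K\phi}^2$ combine, using the self-adjointness of $A^j$ and $B_K$, into the commutator residue $\Re\langle[A^j,B_K]\phi, B_K\phi\rangle$, which is the crucial cancellation that removes the would-be $O(K^2)$ leading piece. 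One thus obtains $dX_t=F_t\,dt+G_t\,dW_t$ with
\begin{align*}
F_t = 2\lambda\,\Im\langle A^j\phi, P_K f_L^k(\phi)\rangle + \Re\langle[A^j,B_K]\phi, B_K\phi\rangle,\qquad G_t = 2\,\Im\langle A^j\phi, B_K\phi\rangle.
\end{align*}

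Next I would bound $|F_t|$ and $|G_t|$ by $C(j,L)X_t$ uniformly in $K$. Lemma \ref{lem:lips}, together with the fact that $P_K$ is a contraction in every $\Sigma^j$, gives $|\Im\langle A^j\phi, P_K f_L^k(\phi)\rangle|\leq C(j,L)\normSigma{j}{\phi}^2$, and \eqref{as:eq:4} directly controls the commutator piece of $F_t$. For $G_t$ I would split $A^j = A^{j/2}\cdot A^{j/2}$ and shift the inner factor across; together with the Hermiticity of $B_K$ this eliminates the diagonal part, since $\langle A^{j/2}\phi, B_K A^{j/2}\phi\rangle$ is real, leaving $\Im\langle A^j\phi, B_K\phi\rangle = \Im\langle A^{j/2}\phi, [A^{j/2},B_K]\phi\rangle$, and Assumption \eqref{as:eq:3} yields $|G_t|\leq C(j)\normSigma{j}{\phi}^2$. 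Having $|F_t|,|G_t|\leq C(j,L)X_t$, I apply It\^{o}'s formula a second time to $X_t^2$; the martingale part vanishes in expectation, and
\begin{align*}
\esp{X_t^2} \leq \normSigma{j}{\phi_0}^4 + \esp{\int_0^t \bigl(2X_s|F_s|+G_s^2\bigr)\,ds} \leq \normSigma{j}{\phi_0}^4 + C(j,L)\int_0^t \esp{X_s^2}\,ds,
\end{align*}
so Gronwall's inequality produces $\esp{X_t^2}\leq e^{C(j,L)T}\normSigma{j}{\phi_0}^4$, which is the announced estimate after taking square roots.

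The main obstacle is securing these $K$-uniform estimates despite the blow-up $\normSigma{j}{B_K\phi}\leq C(j)K\normSigma{j}{\phi}$ predicted by \eqref{as:eq:2}. The resolution is the pair of algebraic cancellations built into Assumption \ref{as:1}: the Stratonovich-to-It\^{o} correction $-\tfrac{1}{2}B_K^2\phi$ kills exactly the $O(K^2)$ leading part of the quadratic-variation contribution to the drift, while the noise coefficient $G_t$ loses its $O(K)$ leading part by Hermiticity once $A^j$ is split symmetrically. In both cases what survives is a commutator controlled uniformly in $K$ by \eqref{as:eq:3}--\eqref{as:eq:4}, which is the abstract analog of the It\^{o} energy cancellation underlying the continuous SPDE \eqref{eq:GP}.
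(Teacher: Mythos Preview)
Your proof is correct and follows exactly the route the paper indicates: the paper omits the argument entirely, saying only that it ``follows from It\^{o}'s Lemma'', and you carry this out with precisely the right algebraic cancellations---the Stratonovich correction $-\tfrac{1}{2}B_K^2\phi$ combining with the quadratic-variation term into $\Re\langle[A^j,B_K]\phi,B_K\phi\rangle$ controlled by \eqref{as:eq:4}, and the Hermiticity of $B_K$ reducing the noise coefficient to a commutator. One minor remark: the splitting $A^j=A^{j/2}\cdot A^{j/2}$ appeals to \eqref{as:eq:3} at a half-integer exponent when $j$ is odd, which is not literally covered by the assumption; it is cleaner to observe directly (as the paper does in \eqref{eq:trunc_maj} and in the proof of Lemma~\ref{lem:stab_lin_first_estimates}) that $2\,\Im\langle A^j\phi,B_K\phi\rangle=-\Im\langle[A^j,B_K]\phi,\phi\rangle$ and then invoke \eqref{as:eq:3} with the integer $k=j$.
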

\begin{lem}
  \label{lem:conv_lem2}
  For all $T>0$, $L\in\N^*$, $j\in\N$, $K(\delta t)\in\N^*$ and for all $\phi_0\in\Sigma^j(\R^d)$,
  there exists $C(T,L,j)>0$, independent of $\phi_0$, such that, for all $s\leq t$
  \begin{align*}
    \esp{\normSigma{j}{\phi_{K(\delta t),L}(t)-\phi_{K(\delta t),L}(s)}^4}^{1/2}
    \leq C(L,j) (t-s) \normSigma{j+4}{\phi_0}^2,
  \end{align*}
  and
  \begin{align*}
    \esp{\normSigma{j}{R(t,s)}^4}^{1/2}
    \leq C(L,j) (t-s)^2 \normSigma{j+4}{\phi_0}^2,
  \end{align*}
  where $\phi_{K(\delta t),L}$ is the solution of \eqref{eq:GP_trunc}.
  and $R(t,s)$ is defined by \eqref{eq:Rts}.
\end{lem}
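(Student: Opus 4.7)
The plan is to rewrite the Stratonovich SPDE \eqref{eq:GP_modif} in It\^{o} form, namely
\begin{align*}
d\phi_{K(\delta t),L} = -iA_{K(\delta t)}\phi_{K(\delta t),L}dt - i\lambda P_{K(\delta t)}f^k_L(\phi_{K(\delta t),L})dt - \tfrac{1}{2}B_{K(\delta t)}^2\phi_{K(\delta t),L}dt - iB_{K(\delta t)}\phi_{K(\delta t),L}dW_t,
\end{align*}
the drift $-\tfrac{1}{2}B_{K(\delta t)}^2\phi_{K(\delta t),L}dt$ being the It\^{o} correction of the Stratonovich product (since $B_{K(\delta t)}$ is self-adjoint and time-independent). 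Integrating on $[s,t]$ produces four integrals; the first estimate of the lemma follows by controlling $\esp{\normSigma{j}{\cdot}^4}^{1/2}$ for each of them separately, while the second estimate on $R(t,s)$ concerns only the three deterministic summands from \eqref{eq:Rts}.

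For the three deterministic integrals (and thus already for $R(t,s)$), I would push the fourth power inside $[s,t]$ by Jensen's inequality, producing $(t-s)^3$ from H\"older and a further $(t-s)$ after integrating in time the $L^4(\Omega)$ expectation. The operator bounds $\normSigma{j}{A_{K(\delta t)}\varphi}\leq C\normSigma{j+2}{\varphi}$, $\normSigma{j}{B_{K(\delta t)}^2\varphi}\leq C(j)\normSigma{j+4}{\varphi}$ (by iterating \eqref{as:eq:1}), and $\normSigma{j}{P_{K(\delta t)}f^k_L(\varphi)}\leq C(L,j)\normSigma{j}{\varphi}$ (from Lemma \ref{lem:lips}), combined with Lemma \ref{lem:conv_lem1} at level $j+4$, then bound each of these integrals by $C(L,j)(t-s)^2\normSigma{j+4}{\phi_0}^2$; this already yields the second estimate of the lemma.

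For the stochastic term, I would apply the Burkholder--Davis--Gundy inequality in the Hilbert space $\Sigma^j$, yielding
\begin{align*}
\esp{\normSigma{j}{\int_s^t B_{K(\delta t)}\phi_{K(\delta t),L}(r)dW_r}^4}\leq C\,\esp{\left(\int_s^t\normSigma{j}{B_{K(\delta t)}\phi_{K(\delta t),L}(r)}^2dr\right)^2}.
\end{align*}
A Cauchy--Schwarz in time, the bound $\normSigma{j}{B_{K(\delta t)}\varphi}\leq C\normSigma{j+2}{\varphi}$ from \eqref{as:eq:1}, and Lemma \ref{lem:conv_lem1} at level $j+2$ then give a contribution of order $(t-s)\normSigma{j+2}{\phi_0}^2$, which dominates the $(t-s)^2$ deterministic contributions and dictates the $(t-s)$ factor in the first estimate.

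Nothing here is really delicate; the only point to watch is the regularity bookkeeping. The worst loss of derivatives comes from the It\^{o}-corrected drift $B_{K(\delta t)}^2\phi_{K(\delta t),L}$, which via two applications of \eqref{as:eq:1} requires $\Sigma^{j+4}$ regularity on $\phi_0$; this is the level at which Lemma \ref{lem:conv_lem1} is invoked and matches the right-hand sides of both displayed inequalities. All constants are independent of $K(\delta t)$ thanks to the $K$-uniform bounds in Assumption \ref{as:1}, and depend on $L$ only through the truncation in the nonlinearity.
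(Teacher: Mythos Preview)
Your proposal is correct and is precisely the argument the paper has in mind: the authors do not give a proof, stating only that ``[t]heir proof is le[f]t to the reader and follows from It\^{o}'s Lemma.'' Rewriting \eqref{eq:GP_modif} in It\^{o} form, bounding the three drift integrals by Jensen/H\"older together with the operator estimates from Assumption~\ref{as:1} and Lemma~\ref{lem:lips}, invoking Lemma~\ref{lem:conv_lem1} at regularity $j+4$ (respectively $j+2$), and handling the stochastic integral via the Burkholder--Davis--Gundy inequality is exactly the intended route; your bookkeeping of the derivative losses (in particular the $\Sigma^{j+4}$ requirement forced by $B_{K(\delta t)}^2$) is accurate.
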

Their proof is let to the reader and follows from It\^{o}'s Lemma.

Yet, in the following of the proof, we need another stronger result,
\begin{lem}
  \label{lem:conv_lem4}
  $f$ can be split in two terms $f_1$ and $f_2$,
  such that there exists $C(L,j,T)>0$ for which
  \begin{align}
    \label{eq:f1_est}
    \esp{\normSigma{j}{f_1}^4}^{1/2}&\leq C(L,j,T)\delta t^4\normSigma{j+8}{\phi_0}^2,
  \end{align}
  \begin{align*}
    \esp{\normSigma{j}{f_2}^4}^{1/2}&\leq C(L,j,T)\delta t^3\normSigma{j+8}{\phi_0}^2,
  \end{align*}
  and
  \begin{align*}
    \esp{\langle f_2,A^k e_n \rangle}=0.
  \end{align*}
\end{lem}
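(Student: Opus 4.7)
\emph{Approach.} Each of the seven summands composing $f$ is, after inspection, of order $\delta t^{3/2}$ in $L^4_\omega \Sigma^j$ (this is what Lemma \ref{lem:conv_lem3} records). The strategy is to extract from each one its leading $\F_n$-centered part, group all of these extracted pieces into $f_2$, and take $f_1 := f - f_2$, which will turn out to be one power of $\delta t^{1/2}$ smaller. Concretely, I would substitute the Itô form
\[
\phi(t)-\phi(s) = -i\int_s^t (A_K\phi + \lambda P_K f(\phi))(r)\,dr - \tfrac{1}{2}\int_s^t B_K^2\phi(r)\,dr - i\int_s^t B_K\phi(r)\,dW_r
\]
inside each $a_{p,q}$, and then split the integrand as $\phi(r) = \phi(t_n) + (\phi(r)-\phi(t_n))$ to isolate a ``frozen'' contribution with an $\F_n$-measurable coefficient from a correction that carries an extra $\delta t^{1/2}$ factor through Lemma \ref{lem:conv_lem2}.

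\emph{Construction of $f_1$ and $f_2$.} The frozen parts take a few explicit shapes: an $\F_n$-measurable coefficient built from $A_K$, $B_K$, $P_K$, $g_L^k$ and $\phi(t_n)$, multiplied by one of the centered Wiener functionals $\Delta W_n := W_{t_{n+1}}-W_{t_n}$, $(\Delta W_n)^2 - \delta t$ (arising from $\int_{t_n}^{t_{n+1}}(W_s-W_{t_n})dW_s$ in $a_{2,2}$), $\int_{t_n}^{t_{n+1}}(W_s-W_{t_n})\,ds$, $\int_{t_n}^{t_{n+1}}(s-t_n)\,dW_s$, or an iterated double Wiener integral. I would put every such product into $f_2$; in $f_1$ I collect (i) all Riemann double integrals over $[t_n,t_{n+1}]^2$, which are automatically of order $\delta t^2$, and (ii) all correction terms depending on $\phi(r)-\phi(t_n)$, which gain a factor $\delta t^{1/2}$ by Lemma \ref{lem:conv_lem2}. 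Bounding each piece of $f_1$ in $L^4_\omega\Sigma^j$ is then a routine application of Itô's isometry, Burkholder–Davis–Gundy, Lemmas \ref{lem:conv_lem1}–\ref{lem:conv_lem2}, and Lemma \ref{lem:lips} for the nonlinear terms $a_{3,2}$ and $a_{3,3}$ (after a Taylor expansion of $g_L^k$ around $\phi(t_n)$). This yields \eqref{eq:f1_est}. The bound on $f_2$ then follows from Lemma \ref{lem:conv_lem3} and the triangle inequality $\esp{\normSigma{j}{f_2}^4}^{1/2} \leq \esp{\normSigma{j}{f}^4}^{1/2} + \esp{\normSigma{j}{f_1}^4}^{1/2}$.

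\emph{Centering and main obstacle.} Every summand of $f_2$ has the form $Z_n \cdot Y$, where $Z_n$ is $\F_n$-measurable and $Y$ is independent of $\F_n$ with $\esp{Y}=0$ (directly for $\Delta W_n, \int(W_s-W_{t_n})ds, \int(s-t_n)dW_s$, by direct computation for $(\Delta W_n)^2-\delta t$, and by the martingale property of iterated Itô integrals). Hence $\espc{f_2}{\F_n}=0$, and since $e^n$ is $\F_n$-measurable,
\[
\esp{\langle f_2, A^k e^n\rangle} = \esp{\langle \espc{f_2}{\F_n}, A^k e^n\rangle} = 0.
\]
The main obstacle is the systematic bookkeeping for $a_{2,2}$ and $a_{2,4}$, where expanding an already-iterated Wiener integral creates triple stochastic integrals and one must check term-by-term whether each combination is genuinely $\F_n$-centered, and if not, demote it to $f_1$ and bound it by Cauchy–Schwarz and Itô isometry. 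A closely related technical point is that every factor of $A_K$ or $B_K$ must be absorbed with the $K$-independent bound \eqref{as:eq:1} (costing two derivatives per application), not with the $K$-dependent bound \eqref{as:eq:2}, since the latter combined with $K \sim \delta t^{-1/4}$ would destroy the $\delta t^2$ gain — this is precisely what forces the regularity shift from $\Sigma^j$ to $\Sigma^{j+8}$.
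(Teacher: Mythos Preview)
Your proposal is correct and follows essentially the same route as the paper: expand each $a_{p,q}$ via the It\^{o} form of $\phi(s)-\phi(t_n)$, collect into $f_2$ the terms that are $\F_n$-centered (hence orthogonal to $A^k e^n$ in expectation), and place the remaining higher-order pieces in $f_1$, bounding them with Lemmas~\ref{lem:conv_lem1}--\ref{lem:conv_lem2}. Your use of the triangle inequality together with Lemma~\ref{lem:conv_lem3} to deduce the $f_2$ estimate, and your remark on absorbing $A_K,B_K$ via \eqref{as:eq:1} to explain the $\Sigma^{j+8}$ loss, are both appropriate elaborations of the paper's sketch.
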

\begin{proof}[Proof of Lemma \ref{lem:conv_lem4}]
  The proof of Lemma \ref{lem:conv_lem4} is quite computational, but not difficult.
  The idea is to develop the terms $\phi(s)-\phi(t_n)$ that appear in the expressions
  of the $a_{i,j}$ using Equation \eqref{eq:GP_trunc} in the Itô form.
  Then one has to collect the terms with zero expectation when taking their
  $L^2$ inner product with $A^k e_n$.
  These terms form $f_2$.
  The remaining terms form $f_1$ and their estimation \eqref{eq:f1_est} follows
  from a direct application of Lemmas \ref{lem:conv_lem1} and \ref{lem:conv_lem2}.
\end{proof}

We can state the following Lemma, that gives a naive estimation of $e^{n+1}-e^n$,
\begin{lem}
  \label{lem:conv_lem6}
  For all $j\in\N^*$, $L>0$ and $K_0>0$,
  there exists $C(L,j,T,K_0)>0$ such that, for all
  $\delta t\leq \delta t_0(j+2,L,K_0)$
  (where $\delta t_0(j+2,L,K_0)$ is given by Proposition \ref{prop:exist_scheme})
  \begin{align*}
    \esp{\normSigma{j}{e^{n+1}-e^n}^2}
    \leq C(L,j,T,K_0) \left(
      \delta t \esp{\normSigma{j+2}{e^n}^2}
      +\delta t^3 \esp{\normSigma{j+10}{f}^2}
    \right).
  \end{align*}
\end{lem}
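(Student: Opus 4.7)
The starting point is the decomposition \eqref{eq:decomp_e2},
\[
e^{n+1}-e^n=\indic{\tau(\delta t)>n+1}\bigl(a_{1,1}+a_{2,1}+a_{3,1}+f\bigr).
\]
I would take $\Sigma^j$-norms squared, apply the triangle inequality, and estimate each of the four pieces separately. Each $a_{i,1}$ consists of an operator acting on either $e^{n+1/2}$ (for $a_{1,1}$ and $a_{2,1}$) or on $e^n,e^{n+1}$ (for $a_{3,1}$), multiplied by an explicit $\delta t$ or $\sqrt{\delta t}\chi^{n+1}$ prefactor. The key organizing idea is to transfer the operator bounds from Assumption~\ref{as:1} and then decompose $e^{n+1/2}=e^n+(e^{n+1}-e^n)/2$ into an \emph{explicit} $e^n$-piece, which produces the target summand $\delta t\,\esp{\normSigma{j+2}{e^n}^2}$, and an \emph{implicit} $(e^{n+1}-e^n)$-piece, which I intend to absorb into the left-hand side once $\delta t$ is small enough.

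The easy bounds are as follows. $\normSigma{j}{a_{1,1}}^2\leq\delta t^2\normSigma{j+2}{e^{n+1/2}}^2$ by the obvious smoothing bound on $A_K$. $\normSigma{j}{a_{2,1}}^2\leq C\delta t(\chi^{n+1})^2\normSigma{j+2}{e^{n+1/2}}^2$ by \eqref{as:eq:1}; and after expanding $e^{n+1/2}=e^n+(e^{n+1}-e^n)/2$ the explicit $e^n$-piece contributes, after taking expectation and using the independence of $\chi^{n+1}$ from $\F_n$, exactly the term $C\delta t\,\esp{\normSigma{j+2}{e^n}^2}$. For $a_{3,1}$, the Lipschitz property of $g_L^k$ in $\Sigma^j$, obtained by combining Lemma~\ref{lem:lips} with the algebra property of $\Sigma^j$ when $j>d/2$ and the uniform $\Sigma^j$-stability of $\phi^n$ and $\psi^n$ from Proposition~\ref{prop:exist_scheme} and Lemma~\ref{lem:conv_lem1}, yields $\normSigma{j}{a_{3,1}}^2\leq C(L,j,K_0)\delta t^2\bigl(\normSigma{j}{e^{n+1}}^2+\normSigma{j}{e^n}^2\bigr)$, again split into an explicit part and an implicit part in $e^{n+1}-e^n$. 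The $f$-contribution is handled by the trivial inclusion $\esp{\normSigma{j}{f}^2}\leq\esp{\normSigma{j+10}{f}^2}$; the precise $\delta t^3$ prefactor stated in the conclusion appears through a Cauchy--Schwarz pairing against the fourth-moment estimate of Lemma~\ref{lem:conv_lem3}.

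The main obstacle is the implicit contribution of $a_{2,1}$, namely the $B_K(e^{n+1}-e^n)$-term. On $\Sigma^j$ the operator $B_K$ has norm of order $K$ by \eqref{as:eq:2}, so the only way to keep the prefactor independent of $K$ is to use \eqref{as:eq:1}, which introduces $\normSigma{j+2}{e^{n+1}-e^n}$. Since $e^{n+1}-e^n\in\Sigma_K$, the finite-dimensional estimate \eqref{eq:estim_trunc} gives $\normSigma{j+2}{e^{n+1}-e^n}\leq CK\normSigma{j}{e^{n+1}-e^n}$, and combined with the noise truncation $\sqrt{\delta t}|\chi^{n+1}|\leq C_0$ on $\{\tau(\delta t)>n+1\}$ and the scaling constraint $K\leq K_0\delta t^{-1/4}$ the resulting implicit coefficient is at worst of order~$1$. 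To turn this into a strict contraction one can either use a H\"older-type argument paired with the fourth-moment stability bounds on $\phi^n$ and $\psi^n$ (Proposition~\ref{prop:exist_scheme}, Lemma~\ref{lem:conv_lem1}) to extract a $\delta t^{1/2}$-smallness, or directly require $\delta t\leq\delta t_0(j+2,L,K_0)$ sufficiently small so that the implicit coefficient is below $1/2$. This is exactly the reason why the threshold on $\delta t$ depends on $j+2$, $L$ and $K_0$, and why the spatial regularity on the right-hand side is lifted from $j$ to $j+2$.
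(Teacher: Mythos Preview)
Your overall plan---triangle inequality on the decomposition \eqref{eq:decomp_e2}, then estimate $a_{1,1},a_{2,1},a_{3,1},f$ separately---matches the paper's. The gap is in how you dispose of the implicit piece of $a_{2,1}$.

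You split $e^{n+1/2}=e^n+\tfrac12(e^{n+1}-e^n)$ and claim the implicit contribution $\sqrt{\delta t}\,|\chi^{n+1}|\,\normSigma{j}{B_K(e^{n+1}-e^n)}$ can be absorbed because the prefactor is $O(1)$. But following your own estimate---\eqref{as:eq:1} to pass to $\Sigma^{j+2}$, then \eqref{eq:estim_trunc} to return to $\Sigma^j$ at the cost of a factor $K$---the implicit coefficient is
\[
\sqrt{\delta t}\,|\chi^{n+1}|\cdot K\;\leq\;C_0\cdot K_0\,\delta t^{-1/4},
\]
which \emph{diverges} as $\delta t\to 0$. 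Neither of your fallback options rescues this: the fourth-moment stability of $\phi^n,\psi^n$ says nothing useful about $e^{n+1}-e^n$ itself, and ``$\delta t$ small enough'' cannot shrink a coefficient that blows up as $\delta t\to0$. The absorption does not close.

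The paper avoids this entirely by invoking Lemma~\ref{lem:conv_lem5}, the \emph{almost-sure} bound
\[
\normSigma{j+2}{e^{n+1}}^2\;\leq\;C(j,L)\bigl(\normSigma{j+2}{e^n}^2+\normSigma{j+2}{f}^2\bigr),
\]
applied at regularity level $j+2$ (this is why the threshold is $\delta t_0(j+2,L,K_0)$). With this in hand, $\normSigma{j+2}{e^{n+1/2}}^2$ is controlled directly by $\normSigma{j+2}{e^n}^2+\normSigma{j+2}{f}^2$, and no absorption is needed. The point is that Lemma~\ref{lem:conv_lem5} is \emph{not} proved by bounding each $a_{i,1}$ in $\Sigma^{j+2}$; it is proved by an energy argument---pairing $e^{n+1}-e^n$ against $A^k(e^{n+1}+e^n)$---in which the $a_{2,1}$ contribution becomes $\sqrt{\delta t}\chi^{n+1}\Im\langle[A^k,B_K]e^{n+1/2},e^{n+1/2}\rangle$, controlled by \eqref{as:eq:3} with a $K$-independent constant. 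That commutator structure is what makes the a.s.\ bound close, and it is the ingredient missing from your proposal.
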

\begin{proof}[Proof of Lemma \ref{lem:conv_lem6}]
  The proof starts with the following triangular inequality,
  \begin{align}
    \label{eq:naive0}
    \esp{\normSigma{j}{e^{n+1}-e^n}^2}
    \leq 3 \left(
      \esp{\normSigma{j}{a_{1,1}}^2}
      +\esp{\normSigma{j}{a_{2,1}}^2}
      +\esp{\normSigma{j}{a_{3,1}}^2}
      +\esp{\normSigma{j}{f}^2}
    \right).
  \end{align}
  Then, it comes directly that,
  \begin{align*}
    \esp{\normSigma{j}{a_{1,1}}^2}\leq \delta t^2 \esp{\normSigma{j+2}{e^{n+1/2}}^2},
  \end{align*}
  and using Lemma \ref{lem:conv_lem5},
  \begin{align}
    \label{eq:naive1}
    \esp{\normSigma{j}{a_{1,1}}^2}\leq C(j,L) \delta t^2 \left(
    \esp{\normSigma{j+2}{e^{n}}^2}+\esp{\normSigma{j+2}{f}^2}
    \right).
  \end{align}
  Similarly,
  \begin{align*}
    \esp{\normSigma{j}{a_{2,1}}^2}\leq \delta t \esp{ (\chi^{n+1})^2\normSigma{j+2}{e^{n+1/2}}^2},
  \end{align*}
  and using Lemma \ref{lem:conv_lem5},
  \begin{align}
    \label{eq:naive2}
    \esp{\normSigma{j}{a_{2,1}}^2}\leq C(j,L) \delta t\left(
      \esp{\normSigma{j+2}{e^{n}}^2}+\esp{\normSigma{j+2}{f}^2}
    \right).
  \end{align}
  The third term is estimated from the fact that $g$ is Lipschitz,
  \begin{align*}
    \esp{\normSigma{j}{a_{3,1}}^2}\leq C(j,L) \delta t^2 \left(
      \esp{\normSigma{j}{e^{n+1}}^2}+\esp{\normSigma{j}{e^{n+1}}^2}
    \right),
  \end{align*}
  and using Lemma \ref{lem:conv_lem5} yields,
  \begin{align}
    \label{eq:naive3}
    \esp{\normSigma{j}{a_{3,1}}^2}\leq C(j,L) \delta t^2 \left(
      \esp{\normSigma{j}{e^{n}}^2}+\esp{\normSigma{j}{f}^2}
    \right).
  \end{align}
  Eventually, collecting Equations \eqref{eq:naive0}, \eqref{eq:naive1}, \eqref{eq:naive2}
  and \eqref{eq:naive3}, and using Lemma \ref{lem:conv_lem3} yields,
  \begin{align*}
    \esp{\normSigma{j}{e^{n+1}-e^n}^2}
    \leq C(L,j,T,K_0) \left(
      \delta t \esp{\normSigma{j+2}{e^n}^2}
      +\delta t^3 \esp{\normSigma{j+10}{f}^2}
    \right).
  \end{align*}
\end{proof}

We need a last result that makes use of the truncation of the discretized
Brownian motion at each time step.
\begin{lem}
  \label{lem:conv_lem5}
  For all $j\in\N^*$, $L>0$ and $K_0>0$,
  there exists $\delta t_1(j,L,K_0)\leq \delta t_0(j,L,K_0)$
  (where $\delta t_0(j,L,K_0)$ is given by Proposition \ref{prop:exist_scheme})
  and $C(j,L)>0$ such that for all $\delta t\leq\delta t_1(j,L)$, almost surely,
  \begin{align*}
    \normSigma{k}{e^{n+1}}^2\leq C(j,L) \left(\normSigma{k}{e^n}^2+\normSigma{k}{f}^2 \right).
  \end{align*}
\end{lem}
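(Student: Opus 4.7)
The plan is to read off an almost sure pathwise estimate directly from the decomposition \eqref{eq:decomp_e2}. On the event $\{\tau(\delta t)\le n+1\}$ the indicator vanishes and $e^{n+1}=e^n$ almost surely, so the claim is trivial with constant $1$. On the complementary event I collect the two linear contributions,
\begin{equation*}
  a_{1,1}+a_{2,1}=-i\tfrac{\delta t}{2}A_K(e^{n+1}+e^n)-i\tfrac{\sqrt{\delta t}}{2}\chi^{n+1}B_K(e^{n+1}+e^n),
\end{equation*}
and rewrite \eqref{eq:decomp_e2} as an implicit Crank-Nicolson step with source $a_{3,1}+f$:
\begin{equation*}
  \Bigl(Id+i\tfrac{\delta t}{2}A_K+i\tfrac{\sqrt{\delta t}}{2}\chi^{n+1}B_K\Bigr)e^{n+1}
  =\Bigl(Id-i\tfrac{\delta t}{2}A_K-i\tfrac{\sqrt{\delta t}}{2}\chi^{n+1}B_K\Bigr)e^n+a_{3,1}+f.
\end{equation*}
Since $e^n$, $a_{3,1}$ and $f$ all lie in $\Sigma_K$, I can invert the left-hand operator to obtain the Duhamel-like identity $e^{n+1}=S_{\delta t,K,n+1,j}e^n+T^{-1}_{\delta t,K,n+1,j}(a_{3,1}+f)$.

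Applying Lemma \ref{lem:C_0} bounds both operator norms by $\sqrt 2$ and yields the pathwise inequality
\begin{equation*}
  \normSigma{k}{e^{n+1}}\le\sqrt{2}\,\normSigma{k}{e^n}+\sqrt{2}\,\normSigma{k}{a_{3,1}}+\sqrt{2}\,\normSigma{k}{f}.
\end{equation*}
To control $a_{3,1}=-i\delta t P_K\bigl(g^k_L(\phi^{n+1},\phi^n)-g^k_L(\phi(t_{n+1}),\phi(t_n))\bigr)$, I use the Lipschitz estimate for $g^k_L$ in $\Sigma^k$ from Lemma \ref{lem:lips}, noting that on $\{\tau(\delta t)>n+1\}$ one has $\psi^n=\phi(t_n)$ and $\psi^{n+1}=\phi(t_{n+1})$, together with the fact that $P_K$ is an orthogonal projection in $\Sigma^k$; this gives
\begin{equation*}
  \normSigma{k}{a_{3,1}}\le \delta t\,C(L,k)\bigl(\normSigma{k}{e^{n+1}}+\normSigma{k}{e^n}\bigr).
\end{equation*}

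Plugging this back in and setting $\delta t_1(j,L,K_0):=\min\bigl(\delta t_0(j,L,K_0),(2\sqrt{2}\,C(L,k))^{-1}\bigr)$, the coefficient $\sqrt{2}\delta t\,C(L,k)$ of $\normSigma{k}{e^{n+1}}$ on the right-hand side is at most $1/2$ and can be absorbed into the left-hand side; squaring the resulting inequality yields the claim with a constant $C(j,L)$ that depends only on $L$ and $k$ (and on $j$ through the existence threshold $\delta t_0$). The argument is essentially a pathwise reproduction of the first steps in the proof of the linear stability Lemma \ref{lem:stability_lin}, with the two ``nonlinear source terms'' $a_{3,1}$ and $f$ treated as inhomogeneities; the only place where smallness of $\delta t$ is actually used is the final absorption step, and the only real bookkeeping work is the algebraic rearrangement that exposes the Crank-Nicolson operators $S_{\delta t,K,n+1,j}$ and $T^{-1}_{\delta t,K,n+1,j}$ so that Lemma \ref{lem:C_0} can be applied.
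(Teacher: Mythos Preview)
Your argument is correct and reaches the same conclusion as the paper, but by a somewhat different route. The paper expands $\normSigma{k}{e^{n+1}}^2-\normSigma{k}{e^n}^2=\Re\langle A^k(e^{n+1}+e^n),a_{1,1}+a_{2,1}+a_{3,1}+f\rangle$ and estimates the four pieces separately: the $a_{1,1}$ term vanishes, the $a_{2,1}$ term is controlled via the commutator bound \eqref{as:eq:3} exactly as in the proof of Lemma~\ref{lem:C_0} (yielding the factor $C_0C(j)=1/3$), the $a_{3,1}$ term by the Lipschitz property of $g_L^k$, and the $f$ term by Young's inequality. Your rewriting $e^{n+1}=S_{\delta t,K,n+1,j}e^n+T^{-1}_{\delta t,K,n+1,j}(a_{3,1}+f)$ packages the first two of these steps into a direct appeal to Lemma~\ref{lem:C_0}, avoiding the explicit commutator computation; the treatment of $a_{3,1}$ and the absorption for small $\delta t$ are then identical in both proofs. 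Your approach is a bit more economical since it reuses Lemma~\ref{lem:C_0} as a black box rather than reproducing its proof.

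One small point to make explicit: Lemma~\ref{lem:C_0} is stated for the index $j$ matching the choice $C_0=C_0(j)$ in the scheme, whereas you apply the operator bounds in $\Sigma^k$. This is harmless because the proof of Lemma~\ref{lem:C_0} goes through verbatim at level $k$, giving $\normSigma{k}{S\phi}^2\le\frac{1+C_0C(k)}{1-C_0C(k)}\normSigma{k}{\phi}^2$; since the constants $C(\cdot)$ in \eqref{as:eq:3} can without loss of generality be taken non-decreasing and $k\le j$, one still has $C_0C(k)\le C_0C(j)=1/3$. It would be worth adding a sentence to this effect. The paper's proof has the same implicit dependence (it writes ``$C(j)$'' while working in $\Sigma^k$), so this is not a discrepancy between the two arguments, just a point both leave tacit.
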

\begin{proof}[Proof of Lemma \ref{lem:conv_lem5}]
  If $\omega\in\left\{ \tau(\delta t)(\omega)<=n+1 \right\}$, then there is
  nothing to prove.
  Otherwise, by definition $\sqrt{\delta t}\abs{\chi^{n+1}}\leq C_0$ and
  we have by \eqref{eq:decomp_e},
  \begin{align*}
      e^{n+1}=e^n+\left(a_1+a_2+a_3\right),
  \end{align*}
  Then,
  \begin{align*}
    \normSigma{k}{e^{n+1}}^2-\normSigma{k}{e^{n}}^2=
    \Re\langle A^k(e^{n+1}+e^n), a_{1,1}+a_{2,1}+a_{3,1}+f \rangle.
  \end{align*}

  Since $a_{1,1}=-i\delta t A_K e^{n+1/2}$, it comes that
  \begin{align*}
    \Re\langle A^k(e^{n+1}+e^n), a_{1,1}\rangle = 0.
  \end{align*}

  Since $a_{2,1}=-i\sqrt{\delta t}\chi^{n+1}B_Ke^{n+1/2}$,
  \begin{align*}
    \Re\langle A^k(e^{n+1}+e^n), a_{2,1}\rangle
    &=-2\sqrt{\delta t}\chi^{n+1}\Im\langle A^k e^{n+1/2},B_K e^{n+1/2} \rangle\\
    &=\sqrt{\delta t}\chi^{n+1}\Im\langle [A^k,B_K] e^{n+1/2},e^{n+1/2} \rangle.
  \end{align*}
  Then, using \eqref{as:eq:3}, we get in the same way of
  Equation \eqref{eq:trunc_maj} and \eqref{eq:trunc_maj2},
  \begin{align*}
    \Re\langle A^k(e^{n+1}+e^n), a_{2,1}\rangle
    &\leq C(j)\sqrt{\delta t}\abs{\chi} \normSigma{k}{e^{n+1/2}}^2\\
    &\leq C_0 C(j) \normSigma{k}{e^{n+1/2}}^2\\
    &\leq C_0 C(j)\dfrac{1}{2} \left( \normSigma{k}{e^{n+1}}^2 + \normSigma{k}{e^{n}}^2 \right).\\
  \end{align*}
  Note that the constant $C(j)$ is the same here as in Equation \eqref{eq:trunc_maj}.
  Then, following the proof of Lemma \ref{lem:C_0},
  we get that $C_0 C(j)=1/3$.

  Since $a_{3,1}=-i\delta t P_K\left( g(\phi^{n+1},\phi^n)-g(\phi(t_{n+1}),\phi(t_n)) \right)$,
  and using Lemma \ref{lem:lips},
  \begin{align*}
    \Re\langle A^k(e^{n+1}+e^n), a_{3,1}\rangle
    &\leq C(j,L)\delta t \left(\normSigma{k}{e^{n+1}}^2+\normSigma{k}{e^{n}}^2 \right).
  \end{align*}
  And eventually, by the Cauchy-Schwarz and the Young inequalities, for all $\epsilon>0$,
  \begin{align*}
    \Re\langle A^k(e^{n+1}+e^n), f\rangle
    \leq \epsilon\normSigma{k}{e^{n+1}}^2+\dfrac{4}{\epsilon}\normSigma{k}{f}^2.
  \end{align*}

  To sum up, we obtain that,
  \begin{align*}
    \normSigma{k}{e^{n+1}}^2\left(1-\dfrac{C_0 C(j)}{2} +C(j,L)\delta t+\epsilon \right)
    \leq \normSigma{k}{e^{n}}^2\left(1+\dfrac{C_0 C(j)}{2} +C(j,L)\delta t \right)
    + \dfrac{4}{3} \normSigma{k}{f}^2.
  \end{align*}
  Then, by taking $\epsilon$, small enough, and choosing $\delta t$ small enough,
  one can ensure that
  \begin{align*}
    \left(1-\dfrac{C_0 C(j)}{2} +C(j,L)\delta t+\epsilon \right)>0,
  \end{align*}
  which proves that there exists $C(j,L)>0$, such that,
  \begin{align}
    \normSigma{k}{e^{n+1}}^2\leq C(j,L) \left(\normSigma{k}{e^n}^2+\normSigma{k}{f}^2 \right).
  \end{align}
\end{proof}

Thanks to these lemmas, we are now ready to prove Equation \eqref{eq:ggoal1}
for $p\in\{1,2,3\}$, and Equation \eqref{eq:ggoal2}.
We begin with Equation \eqref{eq:ggoal1}.

\begin{itemize}
  \item Estimate of $\Re\esp{\indic{\tau(\delta t)>n+1}\langle a_{1,1},A^k e^{n+1/2}\rangle}$.
  We recall that
  \begin{align*}
    a_{1,1}=-i\delta t A_K e^{n+1/2}.
  \end{align*}
  Then $\langle a_{1,1},A^k e^{n+1/2}\rangle$ is purely imaginary,
  and it follows that
  \begin{align}
    \label{eq:estim_a1}
    \Re\esp{\indic{\tau(\delta t)>n+1}\langle a_{1,1},A^k e^{n+1/2}\rangle}=0.
  \end{align}
  \vspace{2mm}

  \item Estimate of $\Re\esp{\indic{\tau(\delta t)>n+1}\langle a_{2,1},A^k e^{n+1/2}\rangle}$.
  We recall that
  \begin{align*}
    a_{2,1}=-i\sqrt{\delta t}\chi^{n+1}B_Ke^{n+1/2}.
  \end{align*}
  Then,
  \begin{align*}
    \Re\langle A^k e^{n+1/2}, a_{2,1}\rangle
    &=-\sqrt{\delta t}\chi^{n+1}\Im\langle A^k e^{n+1/2},B_K e^{n+1/2} \rangle\\
    &=\dfrac{1}{2}\sqrt{\delta t}\chi^{n+1}\Im\langle [A^k,B_K] e^{n+1/2},e^{n+1/2} \rangle.
  \end{align*}
  We split this expression in the following way.
  \begin{align*}
    \Re\langle a_{2,1},A^k e^{n+1/2}\rangle
    \leq&\dfrac{1}{4}\sqrt{\delta t}\chi^{n+1}\Im\langle[A^k,B_K]e^{n+1/2},e^{n+1}-e^n\rangle\\
    &+\dfrac{1}{4}\sqrt{\delta t}\chi^{n+1}\Im\langle[A^k,B_K]e^{n},e^{n+1}-e^n\rangle\\
    &+\dfrac{1}{2}\sqrt{\delta t}\chi^{n+1}\Im\langle[A^k,B_K]e^{n},e^{n}\rangle\\
    =&I+II+III.
  \end{align*}
  Using the following triangular inequality,
  \begin{align}
    \label{eq:estim_a21_1}
    \abs{\Re\esp{\indic{\tau(\delta t)>n+1}\langle a_{2,1},A^k e^{n+1/2}\rangle}}
    \leq \esp{\abs{I}}+\esp{\abs{II}}+\abs{\esp{\indic{\tau(\delta t)>n+1} III}},
  \end{align}
  we estimate now each terms in the right-hand side of Equation \eqref{eq:estim_a21_1}.
  \begin{itemize}
    \item Estimate of $\esp{\abs{I}}$.
    We recall that
    \begin{align*}
      \abs{I}
      =\abs{\dfrac{1}{4}\sqrt{\delta t}\chi^{n+1}\Im\langle[A^k,B_K]e^{n+1/2},e^{n+1}-e^n\rangle}.
    \end{align*}
    Plugging the expression of $e^{n+1}-e^n$ given by \eqref{eq:decomp_e} into this expression,
    and using the triangular inequality gives,
    \begin{align*}
      \abs{I}
      \leq&\abs{\dfrac{1}{4}\delta t^{1/2}\chi^{n+1}\Im\langle[A^k,B_K]e^{n+1/2},a_{1,1}\rangle}\\
      &+\abs{\dfrac{1}{4}\delta t^{1/2}\chi^{n+1}\Im\langle[A^k,B_K]e^{n+1/2},a_{2,1}\rangle}\\
      &+\abs{\dfrac{1}{4}\delta t^{1/2}\chi^{n+1}\Im\langle[A^k,B_K]e^{n+1/2},a_{3,1}\rangle}\\
      &+\abs{\dfrac{1}{4}\delta t^{1/2}\chi^{n+1}\Im\langle[A^k,B_K]e^{n+1/2},f\rangle}\\
      =&i+ii+iii+iv.
    \end{align*}
    By replacing $a_{1,1}$, $a_{2,1}$ and $a_{3,1}$ by their definitions gives,
    \begin{align*}
      \abs{I}
      \leq&\abs{\dfrac{1}{4}\delta t^{3/2}\chi^{n+1}\Re\langle[A^k,B_K]e^{n+1/2},A_K e^{n+1/2}\rangle}\\
      &+\abs{\dfrac{1}{4}\delta t(\chi^{n+1})^2\Re\langle[A^k,B_K]e^{n+1/2},B_K e^{n+1/2}\rangle}\\
      &+\abs{\dfrac{1}{4}\delta t^{3/2}\chi^{n+1}\Re\langle[A^k,B_K]e^{n+1/2},f(\phi^n)-f(\phi(t_n))\rangle}\\
      &+\abs{\dfrac{1}{4}\delta t^{1/2}\chi^{n+1}\Im\langle[A^k,B_K]e^{n+1/2},f\rangle}\\
      =&i+ii+iii+iv.
    \end{align*}
    By use of Assumption \ref{as:1}, we can easily get,
    \begin{align*}
      \esp{i}+\esp{ii}
      \leq C(k,L)\delta t \esp{((\chi^{n+1})^2+\sqrt{\delta t}K\abs{\chi^{n+1}})}\normSigma{k}{e^{n+1/2}}^2.
    \end{align*}
    Since $e^{n+1/2}$ is not $\F_n$-measurable, we now use the almost sure bound
    on $\normSigma{k}{e^{n+1/2}}^2$ given by Lemma \ref{lem:conv_lem5}, followed
    by Cauchy-Schwarz inequality,
    \begin{align*}
      \esp{i}+\esp{ii}
      \leq& C(k)\delta t \esp{((\chi^{n+1})^2+\sqrt{\delta t}K\abs{\chi^{n+1}})
      \left(\normSigma{k}{e^{n}}^2+\normSigma{k}{f}^2\right)}\\
      \leq& C(k)\delta t \esp{(\chi^{n+1})^2+\sqrt{\delta t}K\abs{\chi^{n+1}}}
      \esp{\normSigma{k}{e^{n}}^2}\\
      &+ C(k)\delta t \esp{((\chi^{n+1})^2+\sqrt{\delta t}K\abs{\chi^{n+1}})^2}^{1/2}\esp{\normSigma{k}{f}^4}^{1/2}.
    \end{align*}

    To estimate expression $iii$, we use the fact that $f$ is Lipschitz,
    Cauchy-Schwarz and Young inequality, to get,
    \begin{align*}
      \esp{iii}
      \leq& C(k,L)\delta t^{3/2} \esp{\abs{\chi^{n+1}}\normSigma{k}{e^{n+1/2}}\normSigma{k}{e^n}}\\
      \leq& C(k,L) \delta t^{3/2} \esp{\abs{\chi^{n+1}}^2}\esp{\normSigma{k}{e^n}^2}\\
      &+C(k,L) \delta t^{3/2}\esp{\normSigma{k}{e^{n+1/2}}^2}.
    \end{align*}

    Eventually, the last term $iv$ is estimated by Young inequality followed by
    Cauchy-Schwarz inequality,
    \begin{align*}
      \esp{iv}
      \leq& C(k) \delta t\esp{\normSigma{k}{e^{n+1/2}}^2}+C(k)\esp{(\chi^{n+1})^2}^{1/2}\esp{\normSigma{k}{f}^4}^{1/2}.
    \end{align*}

    By collecting the estimations on $i$, $ii$, $iii$ and $iv$, and using Lemmas
    \ref{lem:conv_lem3} and \ref{lem:conv_lem5},
    it comes the following estimation of $\abs{I}$,
    \begin{align}
      \label{eq:estim_I}
      \esp{\abs{I}}\leq C(k,K^2\delta t^{1/2},L,T)\left(
        \delta t\esp{\normSigma{k}{e^n}^2}
        +\delta t^3 \normSigma{k+8}{\phi_0}^2
      \right).
    \end{align}

    \item Estimate of $\esp{\abs{II}}$.
    We recall that
    \begin{align*}
      \abs{II}
      =\abs{\dfrac{1}{4}\sqrt{\delta t}\chi^{n+1}\Im\langle[A^k,B_K]e^{n},e^{n+1}-e^n\rangle}.
    \end{align*}
    Similarly to the estimation of $\esp{\abs{I}}$, we plug the expression of
    $e^{n+1}-e^n$ given by \eqref{eq:decomp_e} into this expression,
    and using the triangular inequality gives,
    \begin{align*}
      \abs{II}
      \leq&\abs{\dfrac{1}{4}\delta t^{1/2}\chi^{n+1}\Im\langle[A^k,B_K]e^{n},a_{1,1}\rangle}\\
      &+\abs{\dfrac{1}{4}\delta t^{1/2}\chi^{n+1}\Im\langle[A^k,B_K]e^{n},a_{2,1}\rangle}\\
      &+\abs{\dfrac{1}{4}\delta t^{1/2}\chi^{n+1}\Im\langle[A^k,B_K]e^{n},a_{3,1}\rangle}\\
      &+\abs{\dfrac{1}{4}\delta t^{1/2}\chi^{n+1}\Im\langle[A^k,B_K]e^{n},f\rangle}\\
      =&i+ii+iii+iv.
    \end{align*}
    By replacing $a_{1,1}$, $a_{2,1}$ and $a_{3,1}$ by their definitions gives,
    \begin{align*}
      \abs{II}
      \leq&\abs{\dfrac{1}{4}\delta t^{3/2}\chi^{n+1}\Re\langle[A^k,B_K]e^{n},A_K e^{n+1/2}\rangle}\\
      &+\abs{\dfrac{1}{4}\delta t(\chi^{n+1})^2\Re\langle[A^k,B_K]e^{n},B_K e^{n+1/2}\rangle}\\
      &+\abs{\dfrac{1}{4}\delta t^{3/2}\chi^{n+1}\Re\langle[A^k,B_K]e^{n},f(\phi^n)-f(\phi(t_n))\rangle}\\
      &+\abs{\dfrac{1}{4}\delta t^{1/2}\chi^{n+1}\Im\langle[A^k,B_K]e^{n},f\rangle}\\
      =&i+ii+iii+iv.
    \end{align*}
    Terms $i$, $iii$ and $iv$ can be estimated with the same techniques we used
    to estimate expression $I$, and we get,
    \begin{align}
      \label{eq:i-iii-iv}
      \esp{i+iii+iv}\leq C(k,K^2\delta t^{1/2},L,T)\left(
        \delta t\esp{\normSigma{k}{e^n}^2}
        +\delta t^3 \normSigma{k+8}{\phi_0}^2
        \right).
    \end{align}

    To estimate expression $ii$ we use the following technique.
    We split $ii$ in the following way,
    \begin{align*}
      ii
      \leq& \abs{\dfrac{1}{8}\delta t (\chi^{n+1})^2\Re\langle[A^k,B_K]e^{n},B_K (e^{n+1}-e^n)\rangle}\\
      &+\abs{\dfrac{1}{4}\delta t (\chi^{n+1})^2\Re\langle[A^k,B_K]e^{n},B_K e^{n}\rangle}.
    \end{align*}
    The expectation of the second term in the RHS is easily estimated using Assumption \ref{as:1}
    since $e^n$ is independent of $\chi^{n+1}$, and gives,
    \begin{align*}
      \esp{\abs{\dfrac{1}{4}\delta t (\chi^{n+1})^2\Re\langle[A^k,B_K]e^{n},B_K e^{n}\rangle}}
      \leq C(k)\delta t \esp{\normSigma{k}{e^n}^2}.
    \end{align*}
    To estimate the first term in the RHS, we substitute again $e^{n+1}-e^n$
    by its expression given by \eqref{eq:decomp_e}, and use the triangular inequality,
    \begin{align*}
      &\abs{\dfrac{1}{8}\delta t (\chi^{n+1})^2\Im\langle[A^k,B_K]e^{n},B_K (e^{n+1}-e^n)\rangle}\\
      &\qquad\leq\abs{\dfrac{1}{8}\delta t^2 (\chi^{n+1})^2\Im\langle[A^k,B_K]e^{n},B_K A_Ke^{n+1/2}\rangle}\\
      &\qquad+\abs{\dfrac{1}{8}\delta t^{3/2} (\chi^{n+1})^3\Im\langle[A^k,B_K]e^{n},B_K^2 e^{n+1/2}\rangle}\\
      &\qquad+\abs{\dfrac{1}{8}\delta t^{2} (\chi^{n+1})^2\Im\langle[A^k,B_K]e^{n},B_K(f(\phi^n)-f(\phi(t_n)))\rangle}\\
      &\qquad+\abs{\dfrac{1}{8}\delta t^{3/2} (\chi^{n+1})^3\Re\langle[A^k,B_K]e^{n},B_K f\rangle}\\
      &\qquad=\alpha+\beta+\gamma+\delta.
    \end{align*}
    The most difficult term to estimate is $\beta$.
    The estimation of $\alpha$, $\gamma$ and $\delta$ is left to the reader.
    To estimate $\beta$, we use Cauchy-Schwarz inequality
    and Assumption \ref{as:eq:2},
    \begin{align*}
      \esp{\beta}
      \leq& C(k)\delta t^{3/2}\esp{\abs{\chi^{n+1}}^3\normSigma{k}{e^n}\normSigma{k}{B_K^2 e^{n+1/2}}}\\
      \leq& C(k)\delta t(\delta t^{1/2}K^2)\esp{\abs{\chi^{n+1}}^3\normSigma{k}{e^n}\normSigma{k}{e^{n+1/2}}}.
    \end{align*}
    Now using Young's inequality and Lemma \ref{lem:conv_lem5},
    followed by Lemma \ref{lem:conv_lem3},
    \begin{align*}
      \esp{\beta}
      \leq& C(k)\delta t (\delta t^{1/2}K^2)\left(
      \esp{\abs{\chi^{n+1}}^6 \normSigma{k}{e^n}^2} + \esp{\normSigma{k}{f}^2}\right)\\
      \leq& C(k)\delta t (\delta t^{1/2}K^2)\left(
      \esp{\abs{\chi^{n+1}}^6}\esp{\normSigma{k}{e^n}^2} + C(L,k,T)\delta t^3\normSigma{k+8}{\phi_0}^2 \right).
    \end{align*}
    This last estimation can be written as,
    \begin{align}
      \label{eq:ii}
      \esp{\beta}
      \leq& C(k,\delta t^{1/2}K^2,L,T)\left(
      \delta t \esp{\normSigma{k}{e^n}^2} + \delta t^3 \normSigma{k+8}{\phi_0}^2
      \right),
    \end{align}
    and combining \eqref{eq:i-iii-iv} and \eqref{eq:ii} leads to the following
    estimation for $\esp{\abs{II}}$,
    \begin{align}
      \label{eq:estim_II}
      \esp{\abs{II}}
      \leq& C(k,\delta t^{1/2}K^2,L,T)\left(
      \delta t \esp{\normSigma{k}{e^n}^2} + \delta t^3 \normSigma{k+8}{\phi_0}^2
      \right).
    \end{align}

    \item Estimation of $\abs{\esp{\indic{\tau(\delta t)>n+1} III}}$.
    It follows from the definition of $\tau(\delta t)$ (see \eqref{eq:tau_lin}) that,
    \begin{align*}
      \indic{\tau(\delta t)>n+1}=\indic{\tau(\delta t)>n}\indic{\abs{\chi^{n+1}}< C_0\delta t^{-1/2}}.
    \end{align*}
    Using now the fact that $\chi^{n+1}\indic{\abs{\chi^{n+1}}< C_0\delta t^{-1/2}}$ is independent of $e^n$ and $\indic{\tau(\delta t)>n}$, it follows
    \begin{align*}
      \esp{\indic{\tau(\delta t)>n+1}III}=&\esp{\chi^{n+1}\indic{\abs{\chi^{n+1}}< C_0\delta t^{-1/2}}}\\
      &\times \esp{\indic{\tau(\delta t)>n}\dfrac{1}{2}\sqrt{\delta t}\Im\langle[A^k,B_K]e^{n},e^{n}\rangle}.
    \end{align*}
    Since
    \begin{align*}
      \esp{\chi^{n+1}\indic{\abs{\chi^{n+1}}< C_0\delta t^{-1/2}}}=0,
    \end{align*}
    it follows that,
    \begin{align}
      \label{eq:estim_III}
      \esp{\indic{\tau(\delta t)>n+1}III}=0.
    \end{align}
  \end{itemize}
  Eventually, combining \eqref{eq:estim_I}, \eqref{eq:estim_II} and
  \eqref{eq:estim_III} gives,
  \begin{align}
    \label{eq:estim_a2}
    \abs{\Re\esp{\indic{\tau(\delta t)>n+1}\langle a_{2,1},A^k e^{n+1/2}\rangle}}
    \leq C(k,\delta t^{1/2}K^2,L,T)\left(
    \delta t \esp{\normSigma{k}{e^n}^2} + \delta t^3 \normSigma{k+8}{\phi_0}^2
    \right).
  \end{align}
  \vspace{2mm}

  \item Estimate of $\Re\esp{\indic{\tau(\delta t)>n+1}\langle a_{3,1},A^k e^{n+1/2}\rangle}$.
  We recall that
  \begin{align*}
    a_{3,1}=-i\delta t P_K\left( g(\phi^{n+1},\phi^n)-g(\phi(t_{n+1}),\phi(t_n)) \right).
  \end{align*}
  Using Cauchy-Schwarz inequality and the fact that $g$ is Lipschitz for each
  variable in $\Sigma^k(\R^d)$, it easily comes
  \begin{align*}
    \abs{\Re\esp{\indic{\tau(\delta t)>n+1}\langle a_{3,1},A^k e^{n+1/2}\rangle}}
    \leq C(k,L)\delta t \left(\esp{\normSigma{k}{e^n}^2}+\esp{\normSigma{k}{e^{n+1}}^2} \right).
  \end{align*}
  Using now Lemma \ref{lem:conv_lem5}, it comes,
  \begin{align*}
    \abs{\Re\esp{\indic{\tau(\delta t)>n+1}\langle a_{3,1},A^k e^{n+1/2}\rangle}}
    \leq C(k,L)\delta t \left(\esp{\normSigma{k}{e^n}^2}+\esp{\normSigma{k}{f}^2} \right).
  \end{align*}
  Eventually using Lemma \ref{lem:conv_lem3}, it follows that,
  \begin{align}
    \label{eq:estim_a3}
    \abs{\Re\esp{\indic{\tau(\delta t)>n+1}\langle a_{3,1},A^k e^{n+1/2}\rangle}}
    \leq C(k,L,T)\left(
    \delta t \esp{\normSigma{k}{e^n}^2} + \delta t^3 \normSigma{k+8}{\phi_0}^2
    \right).
  \end{align}
  \end{itemize}
  \vspace{2mm}

  We recall that, at this stage of the proof, Equations \eqref{eq:estim_a1},
  \eqref{eq:estim_a2} and \eqref{eq:estim_a3} prove Equation \eqref{eq:ggoal1}
  for $p\in\{1,2,3\}$.
  It remains to prove Equation \eqref{eq:ggoal2}, which is done now:
  \begin{itemize}
  \item Estimate of $\Re\esp{\indic{\tau(\delta t)>n+1}\langle f,A^k e^{n+1/2}\rangle}$
  This estimation uses extensively the Lemma \ref{lem:conv_lem4}.
  We introduce $f_1$ and $f_2$, given this Lemma, and the triangular inequality
  to get,
  \begin{align*}
    \abs{\Re\esp{\indic{\tau(\delta t)>n+1}\langle f,A^k e^{n+1/2}\rangle}}
    \leq
    &\abs{\Re\esp{\indic{\tau(\delta t)>n+1}\langle f_1,A^k e^{n+1/2}\rangle}}\\
    &+\abs{\Re\esp{\indic{\tau(\delta t)>n+1}\langle f_2,A^k e^{n+1/2}\rangle}}\\
    =& (i)+(ii).
  \end{align*}
  We estimate now $(i)$ and $(ii$).
  \begin{itemize}
    \item Estimation of $(i)$.
    By Cauchy-Schwarz and Young inequalities,
    \begin{align*}
      (i)
      \leq C \left (
        \dfrac{\delta t}{2} \esp{\normSigma{k}{e^{n+1/2}}^2}
        + \dfrac{1}{2\delta t} \esp{\normSigma{k}{f_1}^2}
        \right),
    \end{align*}
    and by Lemma \ref{lem:conv_lem4},
    \begin{align}
      \label{estim_fi}
      (i)
      \leq C(L,j,T) \left(\dfrac{\delta t}{2} \esp{\normSigma{k}{e^{n+1/2}}^2}
      + \delta t^3 \normSigma{k+8}{\phi_0}^2 \right).
    \end{align}

    \item Estimation of $(ii)$.
    By triangular inequality, we split $(ii)$ in the following way,
    \begin{align*}
      (ii)\leq
      &\dfrac{1}{2}\esp{\abs{\langle A^2 f_2,A^{k-2} (e^{n+1}-e^n)\rangle}}\\
      &+\abs{\esp{\langle f_2,A^ke^{n+1/2} \rangle}}\\
      &+\abs{\esp{\indic{\tau(\delta t)\leq n+1}\langle f_2,A^ke^{n+1/2} \rangle}}\\
      \leq& (a)+(b)+(c).
    \end{align*}
    Thanks to Lemma \ref{lem:conv_lem4} we get
    \begin{align}
      \label{estim_fb}
      (b)=0,
    \end{align}
    and using in addition Young's inequality,
    \begin{align}
      \label{estim_fa}
      (a)\leq C(L,j,T)\left(\esp{\normSigma{k}{e^n}^2}+\delta t^3\normSigma{k+12}{\phi_0}^2\right).
    \end{align}
    We use Young's and Cauchy-Schwarz inequalities to estimate $(c)$ ,
    \begin{align*}
      (c)
      &\leq \delta t\esp{\normSigma{k}{e^{n+1}}^2}+\dfrac{1}{\delta t}\esp{\indic{\tau(\delta t)\leq n+1}\normSigma{k}{f_2}^2}\\
      &\leq \delta t\esp{\normSigma{k}{e^{n+1}}^2}+\dfrac{1}{\delta t}\prob{\tau(\delta t)\leq n+1}^{1/2}\esp{\normSigma{k}{f_2}^4}^{1/2}.
    \end{align*}
    Using again Lemma \ref{lem:conv_lem4} and the fact that there exists $C(T)>0$ such
    that
    \begin{align*}
      \prob{\tau(\delta t)\leq n+1}^{1/2}\leq C(T)\delta t
    \end{align*}
    holds uniformly in $\delta t$,
    \begin{align}
      \label{estim_fc}
      (c)
      &\leq \delta t\esp{\normSigma{k}{e^{n+1}}^2}+C(L,k,T)\delta t^3\normSigma{k+8}{\phi_0}.
    \end{align}
  \end{itemize}
  Then, by collecting \eqref{estim_fi}, \eqref{estim_fb}, \eqref{estim_fa}, and \eqref{estim_fc},
  we get that
  \begin{align}
    \label{eq:proved_ggoal2}
    \Re\esp{\indic{\tau(\delta t)>n+1}\langle f,A^k e^{n+1/2}\rangle}
    \leq C(k,L,T)\left(
    \delta t \esp{\normSigma{k}{e^n}^2} + \delta t^3 \normSigma{k+8}{\phi_0}^2
    \right),
  \end{align}
  which proves Equation \eqref{eq:ggoal2}.
\end{itemize}

Since Equations \eqref{eq:ggoal1} and \eqref{eq:ggoal2} have been proved,
it immediately follows Equation \eqref{eq:goal_th1}:
\begin{align*}
  \esp{\normSigma{k}{e^{n+1}}^2}
  \leq (1+\delta t C(T,k,L,K_0))\esp{\normSigma{k}{e^{n}}^2}
  +\delta t^3 C(T,k,L,K_0)\normSigma{k+12}{\phi_0}^2,
\end{align*}
and the discrete Gronwall's Lemma gives that,
\begin{align*}
  \esp{\normSigma{k}{e^{n+1}}^2}\leq e^{C(T,k,L,K_0)n\delta t}\delta t^2 C(T,k,L,K_0)\normSigma{k+12}{\phi_0}^2,
\end{align*}
and
\begin{align*}
  \sup_{n\leq N} \esp{\normSigma{k}{e^{n}}^2}\leq C(T,k,L,K_0) \delta t^2 C(T,k,L,K_0)\normSigma{k+12}{\phi_0}^2.
\end{align*}
\end{proof}
\vspace{3mm}

\begin{proof}[Proof of Proposition \ref{prop:conv_ms2}]
  We begin with recalling that since the nonlinearity is truncated to be Lipschitz
  in $\Sigma^k(\R^d)$, it follows that for all $L>0$ and $T>0$, there exists $C(k,L,T)>0$ such that
  for all $\phi_0\in\Sigma^{k+12}(\R^d)$,
  \begin{align}
    \label{eq:maj_trunc}
    \sup_{t\leq T}\esp{\normSigma{k+12}{\phi_{L}(t)}^2}
    +\sup_{t\leq T}\esp{\normSigma{k+12}{\phi_{K(\delta t),L}(t)}^2}
    \leq C(k,L,T) \normSigma{k+12}{\phi_0}^2,
  \end{align}
  where $\phi_L$ is the solution of Equation \eqref{eq:GP_trunc}
  and $\phi_{K(\delta t),L}$ is the solution of Equation \eqref{eq:GP_modif}.
  \vspace{2mm}

  We define the $\F_t$-measurable process $\Phi_{K(\delta t),L}$ for all $t\geq 0$ by
  $\Phi_{K(\delta t),L}(t):=\phi_L(t)-\phi_{K(\delta t),L}(t)$.
  With this notation, we aim to show that
  \begin{align}
    \label{eq:goalprop4}
    \sup_{t\leq T}\esp{\normSigma{k}{\Phi_{K(\delta t),L}(t)}^2} \leq C(T,k,L,K_1) \delta t^2.
  \end{align}
  The proof is done by using It\^{o}'s lemma, followed by Gronwall's inequality.
  More precisely, we apply It\^{o}'s lemma to $\phi_L$ and $\phi_{K(\delta t),L}$
  for the functional $(\phi_1,\phi_2)\mapsto \Re\langle A^k(\phi_1-\phi_2),(\phi_1-\phi_2)\rangle=\normSigma{k}{\phi_1-\phi_2}^2$.
  To make clear how the computation is done, we begin with recalling
  It\^{o}'s formulations of \eqref{eq:GP_trunc} and \eqref{eq:GP_modif}
  where we denote $K$ instead of $K(\delta t)$ to simplify notations,
  \begin{align*}
      d\phi_{K,L} =
      -i A_K\phi_{K,L}dt
      -i B_K\phi_{K,L} dW_t
      -\dfrac{1}{2} B_K^2\phi_{K,L} dt
      -i \lambda P_K f^k_L(\phi_{K,L}) dt,
  \end{align*}
  and
  \begin{align*}
    d\phi_L =&
    -iA\phi_L dt
    -i\abs{x}^2\phi_L dW_t
    -\dfrac{1}{2}\abs{x}^4\phi_L dt
    -i\lambda f^k_L(\phi_L)dt\\
    =& -iA_K \phi_L dt -iA(Id-P_K)\phi_L dt
    -i\abs{x}^2\phi_L dW_t\\
    &-\dfrac{1}{2}B_K^2\phi_L dt -\dfrac{1}{2}(\abs{x}^4 - B_K^2)\phi_L dt
    -i\lambda P_K f^k_L(\phi_L)dt -i\lambda (Id-P_K) f^k_L(\phi_L)dt.
  \end{align*}
  Then, the multidimensional It\^{o}'s lemma gives,
  \begin{equation}
    \label{eq:bigIto}
    \begin{aligned}
    \normSigma{k}{\Phi_{K,L}(t)}^2
    =&\normSigma{k}{\Phi_{K,L}(0)}^2\\
    &+ 2\int_0^t \Re \langle A^k \Phi_{K,L}(s),-iA_K \Phi_{K,L}(s) \rangle ds\\
    &+ 2\int_0^t \Re \langle A^k \Phi_{K,L}(s),-iA (Id-P_K) \phi_L(s) \rangle ds\\
    &+ 2\int_0^t \Re \langle A^k \Phi_{K,L}(s),-i (\abs{x}^2 \phi_L(s)-B_K\phi_{K,L}(s)) \rangle dW_s\\
    &+ 2\int_0^t \Re \langle A^k \Phi_{K,L}(s),- \dfrac{1}{2} B_K^2 \Phi_{K,L}(s) \rangle ds\\
    &+ 2\int_0^t \Re \langle A^k \Phi_{K,L}(s),- \dfrac{1}{2}(\abs{x}^4 - B_K^2) \phi_L(s) \rangle ds\\
    &+ 2 \int_0^t \Re \langle A^k \Phi_{K,L}(s),-i\lambda P_K (f_L^k(\phi_L(s))-f_L^k(\phi_{K,L}(s))) \rangle ds\\
    &+ 2 \int_0^t \Re \langle A^k \Phi_{K,L}(s),-i\lambda (Id-P_K) f_L^k(\phi_L(s)) \rangle ds\\
    &+ \int_0^t \Re \langle A^k (-i\abs{x}^2\phi_L(s)),-i\abs{x}^2\phi_L(s) \rangle ds\\
    &+ \int_0^t \Re \langle A^k (-iB_K \phi_{K,L}(s)),-iB_K \phi_{K,L}(s) \rangle ds\\
    &-2 \int_0^t \Re \langle A^k (-i\abs{x}^2 \phi_{L}(s)),-iB_K \phi_{K,L}(s) \rangle ds.
    \end{aligned}
  \end{equation}
  The last three terms in the right-hand side are the It\^{o} correction.
  The last line takes into accounts the cross derivatives of the functional.
  We write this correction in another way, that will enable us to use Cauchy-Schwarz
  inequality, by noticing that,
  \begin{align*}
    &\Re \langle A^k \abs{x}^2 \phi_L(s),\abs{x}^2\phi_L(s)\rangle
    +\Re \langle A^k B_K\phi_{K,L}(s),B_K\phi_{K,L}(s)\rangle
    -2\Re \langle A^k \abs{x}^2 \phi_L(s) ,B_K \phi_{K,L}(s) \rangle\\
    &\qquad=\Re \langle A^k B_K \Phi_{K,L}(s),B_K \phi_{K,L}(s)\rangle
    +\Re \langle A^k (\abs{x}^2-B_K)\phi_L(s),(\abs{x}^2-B_K)\phi_L(s)\rangle\\
    &\qquad+2\Re \langle A^k B_K \Phi_{K,L}(s),(\abs{x}^2-B_K)\phi_L(s) \rangle.
  \end{align*}
  Then, Equation \eqref{eq:bigIto} can be written as,
  \begin{equation}
    \label{eq:bigIto2}
    \begin{aligned}
    \normSigma{k}{\Phi_{K,L}(t)}^2
    =&\normSigma{k}{\Phi_{K,L}(0)}^2\\
    &+ 2\int_0^t \Re \langle A^k \Phi_{K,L}(s),-iA_K \Phi_{K,L}(s) \rangle ds\\
    &+ 2\int_0^t \Re \langle A^k \Phi_{K,L}(s),-iA (Id-P_K) \phi_L(s) \rangle ds\\
    &+ 2\int_0^t \Re \langle A^k \Phi_{K,L}(s),-i (\abs{x}^2 \phi_L(s)-B_K\phi_{K,L}(s)) \rangle dW_s\\
    &+ 2\int_0^t \Re \langle A^k \Phi_{K,L}(s),- \dfrac{1}{2} B_K^2 \Phi_{K,L}(s) \rangle ds\\
    &+ 2\int_0^t \Re \langle A^k \Phi_{K,L}(s),- \dfrac{1}{2}(\abs{x}^4 - B_K^2) \phi_L(s) \rangle ds\\
    &+ 2 \int_0^t \Re \langle A^k \Phi_{K,L}(s),-i\lambda P_K (f_L^k(\phi_L(s))-f_L^k(\phi_{K,L}(s))) \rangle ds\\
    &+ 2 \int_0^t \Re \langle A^k \Phi_{K,L}(s),-i\lambda (Id-P_K) f_L^k(\phi_L(s)) \rangle ds\\
    &+\int_0^t\Re \langle A^k B_K\Phi_{K,L}(s),B_K\Phi_{K,L}(s) \rangle ds\\
    &+\int_0^t\Re \langle A^k (\abs{x}^2-B_K)\phi_L(s),(\abs{x}^2-B_K)\phi_L(s)\rangle ds\\
    &+2\int_0^t\Re \langle A^k B_K \Phi_{K,L}(s),(\abs{x}^2-B_K)\phi_L(s) \rangle ds.
    \end{aligned}
  \end{equation}
  We proceed now to estimate all the terms that appear in the right-hand side
  of Equation \eqref{eq:bigIto2},
  \begin{itemize}
    \item the second term is equal to zero since
    $\Re \langle A^k \Phi_{K,L}(s),-iA_K \Phi_{K,L}(s) \rangle=0$.
    \item Using Cauchy-Schwarz and Young inequalities, and Equation \eqref{eq:diff_trunc},
    it follows that the third term can be bounded in the following way,
    \begin{align*}
      \Re \langle A^k \Phi_{K,L}(s),-iA (Id-P_K) \phi_L(s) \rangle
      \leq C\left( \normSigma{k}{\Phi_{K,L}(s)}^2 + K^{-8}\normSigma{k+10}{\phi_L(s)}^2 \right).
    \end{align*}
    \item The expectation of the stochastic integral vanishes.
    \item Collecting the fifth and the ninth terms,
    \begin{align*}
      &2\Re \langle A^k \Phi_{K,L}(s),- \dfrac{1}{2} B_K^2 \Phi_{K,L}(s) \rangle
      +\Re \langle A^k B_K\Phi_{K,L}(s),B_K\Phi_{K,L}(s) \rangle\\
      &\qquad= \Re \langle [A^k, B_K] \Phi_{K,L}(s),B_K\Phi_{K,L}(s) \rangle,
    \end{align*}
    and by \eqref{as:eq:4},
    \begin{align*}
      &2\Re \langle A^k \Phi_{K,L}(s),- \dfrac{1}{2} B_K^2 \Phi_{K,L}(s) \rangle
      +\Re \langle A^k B_K\Phi_{K,L}(s),B_K\Phi_{K,L}(s) \rangle\\
      &\qquad\leq C \normSigma{k}{\Phi_{K,L}(s)}^2.
    \end{align*}
    \item Using the fact that $\abs{x}^4-B_K^2=\abs{x}^2(\abs{x}^2-B_K)+(\abs{x}^2-B_K)B_K$,
    the sixth term can be bounded by,
    \begin{align*}
      \Re \langle A^k \Phi_{K,L}(s),- \dfrac{1}{2}(\abs{x}^4 - B_K^2) \phi_L(s) \rangle
      \leq C \left( \normSigma{k}{\Phi_{K,L}(s)}^2 + K^{-8} \normSigma{k+12}{\phi_L(s)}^2\right),
    \end{align*}
    similarly to the third term.
    \item The estimation of the seventh term uses the truncation of nonlinearity,
    \begin{align*}
      \Re \langle A^k \Phi_{K,L}(s),-i\lambda P_K (f_L^k(\phi_L(s))-f_L^k(\phi_{K,L}(s))) \rangle
      \leq C(L,k) \normSigma{k}{\Phi_{K,L}(s)}^2.
    \end{align*}
    The estimation of the eighth term uses in addition Cauchy-Schwarz and Young inequality,
    and \eqref{eq:diff_trunc},
    \begin{align*}
      \Re \langle A^k \Phi_{K,L}(s),-i\lambda (Id-P_K) f_L^k(\phi_L(s)) \rangle
      \leq C(L,k) \left(\normSigma{k}{\Phi_{K,L}(s)}^2 + K^{-8}\normSigma{k+8}{\phi_L(s)}^2  \right).
    \end{align*}
    \item The tenth term is estimated by Assumption \ref{as:convergence},
    \begin{align*}
        \Re \langle A^k (\abs{x}^2-B_K)\phi_L(s),(\abs{x}^2-B_K)\phi_L(s)\rangle
        \leq C K^{-8} \normSigma{k+10}{\phi_L(s)}^2.
    \end{align*}
    \item The last term is estimated by replacing $A^kB_K$ by $B_K A^k+[A^k,B_K]$,
    and using the triangular inequality and Assumption \ref{as:convergence},
    \begin{align*}
      \Re \langle A^k B_K \Phi_{K,L}(s),(\abs{x}^2-B_K)\phi_L(s)
      \leq C\left( \normSigma{k}{\Phi_{K,L}(s)}^2 + K^{-8} \normSigma{k+12}{\phi_L(s)}^2 \right).
    \end{align*}
  \end{itemize}
  Collecting all these estimations, it follows that, for all $t\leq T$
  \begin{align*}
    \esp{\normSigma{k}{\Phi_{K(\delta t),L}(t)}^2}
    \leq&\normSigma{k}{\Phi_{K(\delta t),L}(0)}^2
    +C(k,L)\int_0^t\esp{\normSigma{k}{\Phi_{K(\delta t),L}(s)}^2}ds\\
    &+C(k,L) K^{-8}\int_0^t\esp{\normSigma{k+12}{\phi_{L}(s)}^2}ds.
  \end{align*}
  Using now \eqref{eq:maj_trunc} and the fact that
  $\normSigma{k}{\Phi_{K(\delta t),L}(0)}^2\leq C K^{-8}\normSigma{k+8}{\phi_0}^2$,
  it comes that for all $t\leq T$,
  \begin{align*}
    \esp{\normSigma{k}{\Phi_{K(\delta t),L}(t)}^2}
    \leq C(k,L,T) K^{-8} \normSigma{k+12}{\phi_0}^2
    + C(k,L) \int_0^t\esp{\normSigma{k}{\Phi_{K(\delta t),L}(s)}^2}ds,
  \end{align*}
  and the application of Gronwall's inequality
  with the fact that we choose $K^{-8}\leq K_1^{-4}\delta t^2$
  enables to conclude the proof.
\end{proof}

\section{Numerical experimentations}
\label{sec:num}

This part is devoted to numerical experimentations in space dimension one
with the scheme defined above.
We aim to show that we obtain good results using a spectral discretization
based on Hermite functions.
We also compare the Crank-Nicolson time discretization against splitting methods.
Four different schemes are compared in the following.
We begin with explaining how to implement the scheme defined by \eqref{eq:scheme},
and then we present the other schemes, which are some adaptations of classical schemes
for the deterministic Gross-Pitaevskii equation.
\vspace{3mm}

Two of them rely on the computation of a discrete Hermite transform at each time-step.
This transform decomposes a function $f\in L^2(\R)$ on the basis of $L^2(\R)$
composed by Hermite functions, which are the eigenvectors of operator $-\Delta+\abs{x}^2$.
The discrete transform on the $K$ first modes enables to exactly compute the
components of any function $f\in\Sigma_K(\R)$ in this basis.
It can be computed using the Gauss–Hermite quadrature.
Obviously, for general $f\in L^2(\R)$, the discrete Hermite transform does not
exactly compute these projections.
The error between the values of the components of $f$ on the $K$ first Hermite functions,
and the values given by the discrete transform can be estimated with respect to
the regularity of those functions (see \cite{Mastroianni1994}).
A naive implementation (by a classical matrix-vector product) of the discrete Hermite transform algorithm (in 1D)
using the Gauss-Hermite quadrature
on the $K$ first Hermite functions
leads to a computational cost of order $K^2$.
Such an implementation can be generalized to the $d$-dimensional case with
computational costs of order $K^{d+1}$.
Nevertheless, there exists efficient implementations with computational
costs of order $K^d\log^2(K)$
(see \cite{Leibon:2008:FHT,Potts:1998:FAD,Driscoll:1997:FDP}).
In the following we will say that a computation is \emph{almost exact} when the
only approximations come from the fact that we use the discrete Hermite transform
to compute the projections on the $K$ first Hermite functions
for a general function $f\in L^2(\R^d)$, which does not necessarily belong
to $\Sigma_K(\R^d)$.
The same denomination will be used for the Discrete Fourier Transform.
\vspace{3mm}

With the explicit example of operator $B_K$ we gave (with Equations \eqref{eq:def_BK_1},
\eqref{eq:def_BK_2}, \eqref{eq:def_BK_3} and \eqref{eq:def_BK_4}),
the solutions of Equation \eqref{eq:scheme}
can be computed \emph{almost exactly} thanks to a fixed-point iterative algorithm.
The convergence of such an algorithm is ensured if $\delta t$ is small enough
with respect to $L$, the level of truncation of the nonlinearity, as it is shown
in the proof of Proposition \ref{prop:exist_scheme}.
Nevertheless, the smooth spectral cut-off of operator $\abs{x}^2$ doesn't seem
to have a significant impact in practical cases.
Thus, in practice one can simply take the symmetric operator $P_K\abs{x}^2P_K$ for $B_K$.
\vspace{3mm}

Moreover, in practical cases, the truncations of the non-linearity and the normal deviates
$(\chi^n)_{n\in\N^*}$ do not seem to be significant and can be omitted in the implementation.
We denote by $(\phi_K^n)$ the numerical approximation in this case such that
for all $n\in\N$, $\phi_K^n\in M_{K,1}(\R)$ (column vector of size $K$) and
its components are the components in the Hermite basis of $L^2(\R)$.
Then, Equation \eqref{eq:scheme} is given by
\begin{align}
  \label{eq:schemeimpl}
  D(\delta t,\chi^{n+1})\phi_K^{n+1}=C(\delta t,\chi^{n+1})\phi_K^n -i\delta t P_K g(\phi_K^{n+1},\phi_K^n),
\end{align}
where $C$ and $D$ are the operators defined on $\Sigma_K(\R)$ by
\begin{align}
  \label{eq:defC}
  C(\delta t,\chi^{n+1})=\left(Id-i\dfrac{\delta t}{2} A-i\dfrac{\sqrt{\delta t}\chi^{n+1}}{2}P_K\abs{x}^2\right),
\end{align}
and,
\begin{align}
  \label{eq:defD}
  D(\delta t,\chi^{n+1})=\left(Id+i\dfrac{\delta t}{2} A+i\dfrac{\sqrt{\delta t}\chi^{n+1}}{2}P_K\abs{x}^2\right).
\end{align}
We recall that, by denoting $(e_k)_{k\in\N}$ the orthonormal basis of $L^2(\R)$ composed by Hermite functions, we have the recursive relation,
\begin{align*}
  \forall x\in\R,\qquad xe_k(x)=\sqrt{\dfrac{k}{2}}e_{k-1}(x)+\sqrt{\dfrac{k+1}{2}}e_{k+1}(x).
\end{align*}
Thus, in 1D the matrices of these two operators are pentadiagonal in the Hermite basis.
The computation of the Hermite modes of $P_K g(\phi_K^{n+1},\phi_K^n)$ can be done almost exactly
using the discrete Hermite transform.
Indeed, knowing $\phi_K^n$ and $\phi_K^{n+1}$ one can compute $g(\phi_K^{n+1},\phi_K^n)(x)$
exactly for all $x\in\R$.
Thus one can use a discrete Hermite transform to compute \emph{almost exactly}
the components of $P_K g(\phi_K^{n+1},\phi_K^n)$ in the Hermite basis.
\vspace{3mm}

We present now three other schemes that we use to compare with the one presented above.
\paragraph{\textbf{Splitting scheme with spectral-Fourier discretization}}
For this scheme, the time discretization is based on a splitting method, and the space
discretization is based on a spectral discretization on the Fourier modes.
In other words, we look for a solution defined on $[-L_x,L_x]$, with periodic boundary conditions,
supported by the $K$ first Fourier modes.
A similar scheme has been proposed, for the deterministic case in \cite{Antoine201595,Bao2003318}
for solving Equation \eqref{eq:GP} (without the Stratonovich noise).
We generalize this scheme to our stochastic setting.
The idea is to solve the following equation,
\begin{equation}
  \label{eq:split11}
  \left\{
    \begin{aligned}
    &d\phi= - i\Delta \phi dt,\\
    &\phi(t_n)=\phi_K^n,
    \end{aligned}
  \right.
\end{equation}
and we set $\phi_K^{n+1/2}=\phi(t_{n+1})$.
Then we solve
\begin{equation}
  \label{eq:split12}
  \left\{
    \begin{aligned}
      &d\psi=-i\abs{x}^2 \psi dt -i\abs{x}^2 \psi\circ dW_t-i g \abs{\psi}^2\psi dt,\\
      &\psi(t_n)=\phi_K^{n+1/2},
    \end{aligned}
  \right.
\end{equation}
and we set $\phi_K^{n+1}=P_K \psi(t_{n+1})$.
Here $P_K$ denotes the projection onto the $K$ first Fourier modes.
The first step is computed exactly in the Fourier space.
To compute the other one, we use a discrete Fourier transform.
An inverse discrete Fourier transform enables to compute the value of $\phi_K^{n+1/2}$
on a uniform grid of $[-L_x,L_x]$ of $(K+1)$ points.
Then, the value of $\psi(t_{n+1})$ on those points $x$ is explicitly given by
\begin{align*}
  \psi(t_{n+1})(x)=e^{-i\abs{x}^2(\delta t+\sqrt{\delta t}\chi^{n+1})-ig\abs{\phi_K^{n+1/2}}^2 \delta t}\phi_K^{n+1/2}(x).
\end{align*}
This computation relies on the fact that $t\mapsto \abs{\psi(t)}^2$ is constant on $[t_n,t_{n+1}]$.
Then, the computation of the first Fourier modes of $\phi_K^{n+1}$ is done \emph{almost exactly}
by a discrete Fourier transform.

\paragraph{\textbf{Splitting scheme with spectral-Hermite discretization}}
This scheme is similar to the previous one, but we chose this time a spectral
discretization based on the first Hermite functions.
Equations \eqref{eq:split11} and \eqref{eq:split12} are replaced by
\begin{equation}
  \label{eq:split21}
  \left\{
    \begin{aligned}
    &d\phi= - i A \phi dt,\\
    &\phi(t_n)=\phi_K^n,
    \end{aligned}
  \right.
\end{equation}
and
\begin{equation}
  \label{eq:split22}
  \left\{
    \begin{aligned}
      &d\psi=-i\abs{x}^2 \psi \circ dW_t - i g \abs{\psi}^2\psi \circ dt,\\
      &\psi(t_n)=\phi_K^{n+1/2},
    \end{aligned}
  \right.
\end{equation}
and $P_K$ denotes now the projection onto the first Hermite functions.
Similarly, both equations \eqref{eq:split21} and \eqref{eq:split22} are solved
exactly, and the projection of $\psi(t_{n+1})$ onto the $K$ first Hermite functions
is done by a discrete Hermite Transform, and thus is not exact.

\paragraph{\textbf{Crank-Nicolson scheme with finite differences discretization}}
In this case, we suppose that the solution almost vanishes outside the space interval
$[-L_x,L_x]$, such that we can set homogeneous Dirichlet conditions on the boundaries
of this domain.
The space discretization of the Laplacian is classical, and the nonlinearity can
be truncated to ensure well-posedness of the scheme.
We discretize $[-L_x,L_x]$ with a regular grid of $2K+1$ points.
We set $\delta x=L_x/K$.
We denote by $\phi^n_k$ an approximation of
$\phi(n\delta t, k\delta x)$ for $k\in\{-K,-K+1,\ldots,K\}$, and
by $\phi^{n+1/2}_k$ by $\phi^{n+1/2}_k=\frac{\phi^{n+1}_k+\phi^n_k}{2}$.
We define the scheme by induction for $n\in\N$ and $k\in\{-K+1,\ldots,K-1\}$ by
setting $\phi^0_k=\phi(0,k\delta x)$ and
\begin{equation}
  \begin{aligned}
    \label{eq:scheme_CN_FD}
    \phi^{n+1}_k&=\phi^{n}_k
    -i \dfrac{\delta t}{\delta x^2}( \phi^{n+1/2}_{k+1}-2\phi^{n+1/2}_k+\phi^{n+1/2}_{k-1} )
    -i (\delta t+\sqrt{\delta t}\chi^{n+1}) \abs{k\delta x}^2 \phi^{n+1/2}_k\\
    &-i \delta t g(\phi^{n+1}_k,\phi^n_k).
  \end{aligned}
\end{equation}
\vspace{3mm}


In the following, we compare the convergence properties for these four schemes.
We begin with estimating the pathwise speed of convergence for the space discretization.
To do that, we choose a fine enough time-step to neglect the effect of the time discretization,
and a geometrically increasing sequence of degrees of freedom for the space discretization.
Then we compare the error between the solutions computed for two successive levels
of space discretization, for one trajectory.
More precisely, we choose $T=10$, $\delta t=10\cdot 2^{-20}$.
For the Fourier and finite differences approximations, we solve the equation
on the domain $[-10,10]$ and we set respectively periodic boundary conditions
and Dirichlet homogeneous.
Moreover, we use a coefficient $\alpha=0.3$ in front of the noise, to decrease its effects.
It enables to reduce significantly the error coming from the time discretization.
The result is given in Figure \ref{fig:convK1}
\begin{figure}[!ht]
\centering
\begin{minipage}{.5\textwidth}
  \centering
  \includegraphics[width=\linewidth]{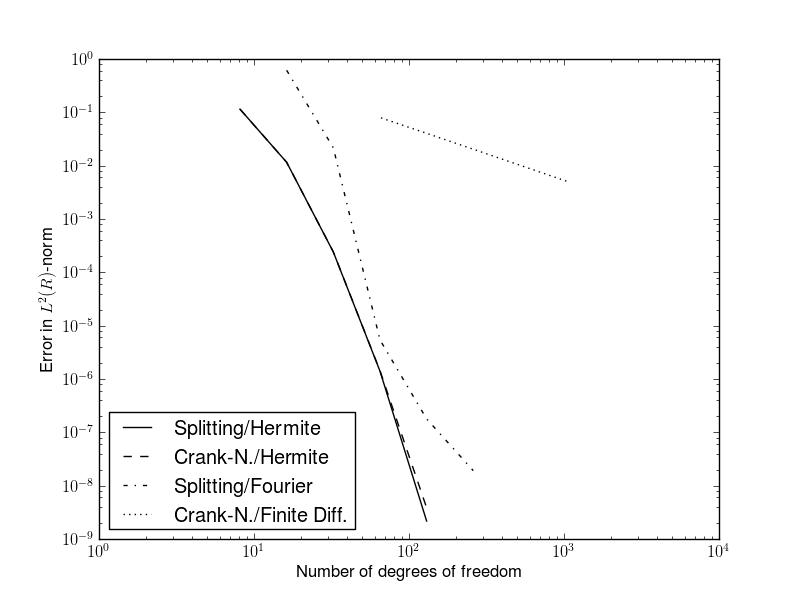}
  \captionof{figure}{Pathwise speed of convergence for each discretization toward their respective limit.}
  \label{fig:convK1}
\end{minipage}%
\begin{minipage}{.5\textwidth}
  \centering
  \includegraphics[width=\linewidth]{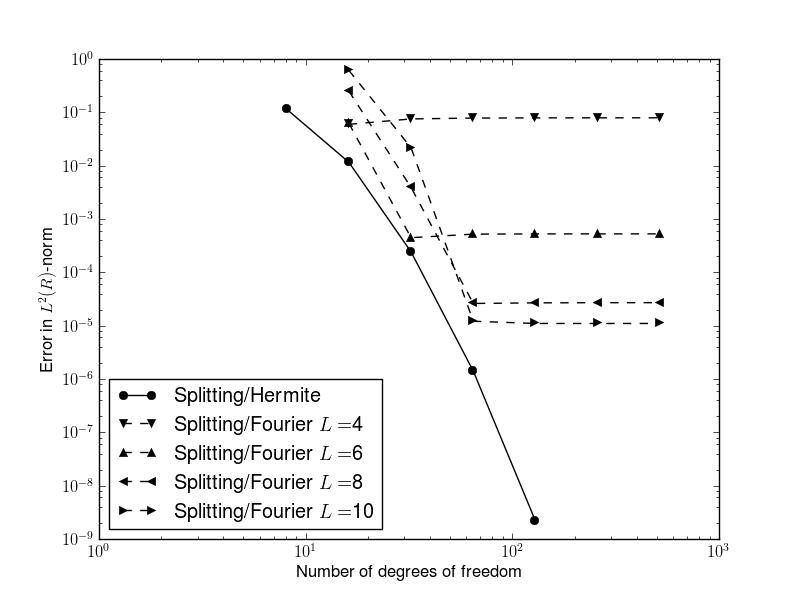}
  \captionof{figure}{Pathwise speed of convergence toward the exact solution}
  \label{fig:convK2}
\end{minipage}
\end{figure}
We can observe first that the discretization by finite differences is less efficient
than the spectral discretization for the same number of degrees of freedom.
We can also observe that for the two schemes using the spectral-Hermite discretization
behave similarly.
This is expected since they essentially use the same space discretization with
different time approximations, and the time step is chosen small enough to neglect
the errors coming from the time approximations.
We can also observe that the Fourier discretization converges around the same speed
of the Hermite one.
In some case (especially for small $T$), we observed that the convergence can be
even quicker.
We guess that this can be explained by the fact that the $n^{th}$ eigenvalue of
the Laplacian (on a bounded domain) is of order $n^2$, whereas it is of order $n$
for the harmonic oscillator.
Nevertheless, the Fourier discretization converges to a biased solution because
of the truncation of the level.

To take into account the fact that the Fourier solution is biased,
we compute the error between the solutions of the Fourier scheme with the most
precise solution computed with the Hermite discretization.
The result is plotted in Figure \ref{fig:convK2} for several domains $[-L,L]$
(for the Fourier discretization).

We observe now that, provided that the domain of integration is large enough,
the Fourier discretization performs quite well.
This remark is all the more interesting since the discrete Fourier transform
can be implemented very efficiently.
Nevertheless, since the spread of the solution is varying for each realization of the solution of \eqref{eq:GP}, the choice of the domain should be adapted to each realization.
This is the reason why the Hermite transform can be very interesting
(despite its cost).
Moreover, as we show thereafter, the time discretization is actually the limiting
approximation.
For example, in Figure \ref{fig:convK2}, it is useless to plot the error for
larger $L$ since the error generated by the time discretization for both splitting
methods becomes greater than the error generated by the space discretization.

We look now at the convergence with respect to the time discretization in the
case of a spectral Hermite discretization.
We compare the Crank-Nicolson discretization with the above time-splitting
discretization.
We choose a geometrically decreasing sequence of time steps, and we plot
the average over 100 samples of the square of the error in $L^2(\R)$-norm between
the solutions computed with two successive time steps.
The parameters are the number of modes $K\in\{40,80,120,160,200\}$,
and the coefficient
that we use in front of the Stratonovich integral
(we call it $\alpha$ and choose $\alpha\in\{0.2,0.4,0.6,1\}$).
We plot the results in Figure \ref{fig:convh} for $T=4$.
\begin{figure}[!ht]
    \centering
    \includegraphics[width=\textwidth]{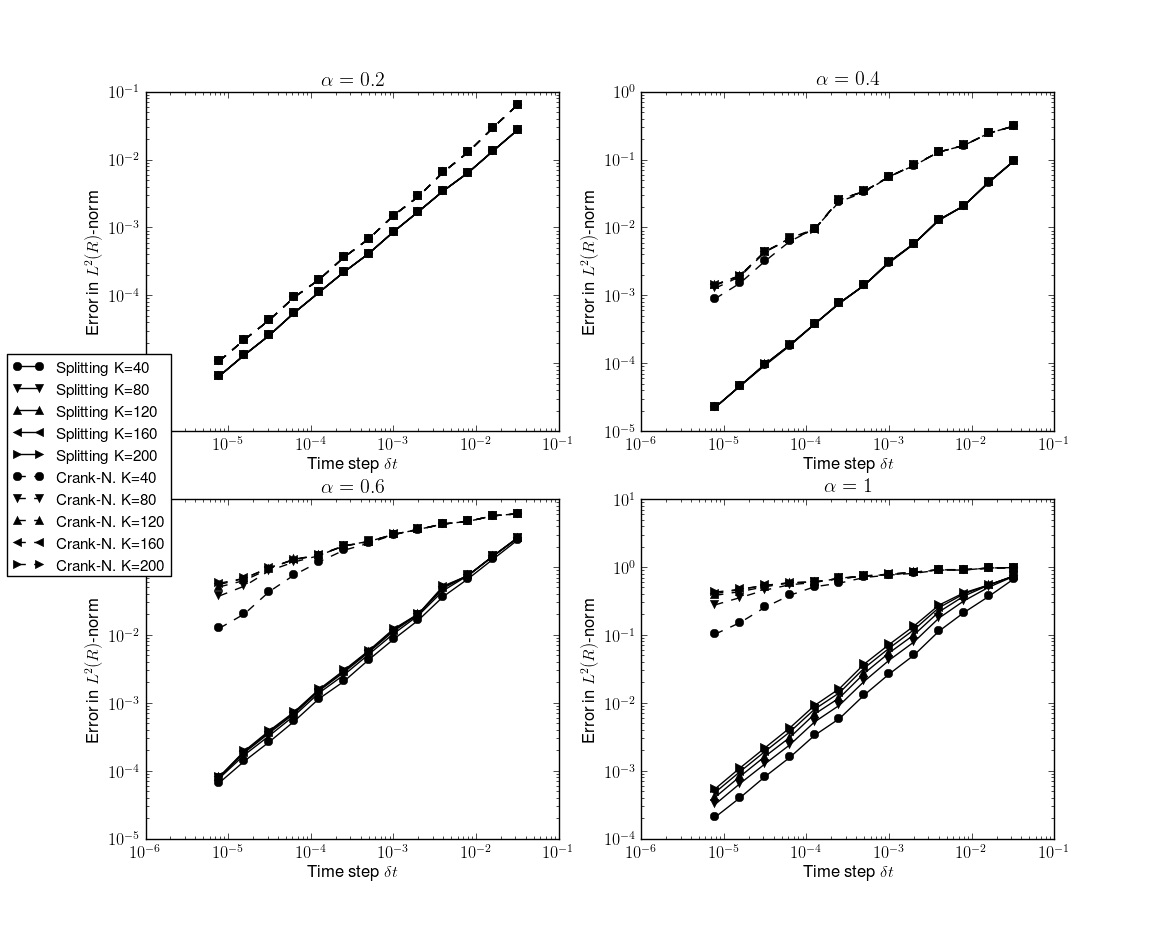}
    \caption{Mean square convergence with respect to the time step
    for the Crank-Nicolson and the splitting discretizations}
    \label{fig:convh}
\end{figure}
Each subplot corresponds to a value of $\alpha$, and in each subplot we plot
the mean square error for all the values of $K$, and for the two kinds of
discretization.
First, we can observe that the Splitting method is consistently doing better than
the Crank-Nicolson discretization.
Moreover, the latter is performing badly for high values of $\alpha$,
contrary to the former which is much less sensible with respect to this parameter.
In addition, the error is increasing with the number of modes for the large $\alpha$.
\begin{figure}[!ht]
    \centering
    \includegraphics[width=\textwidth]{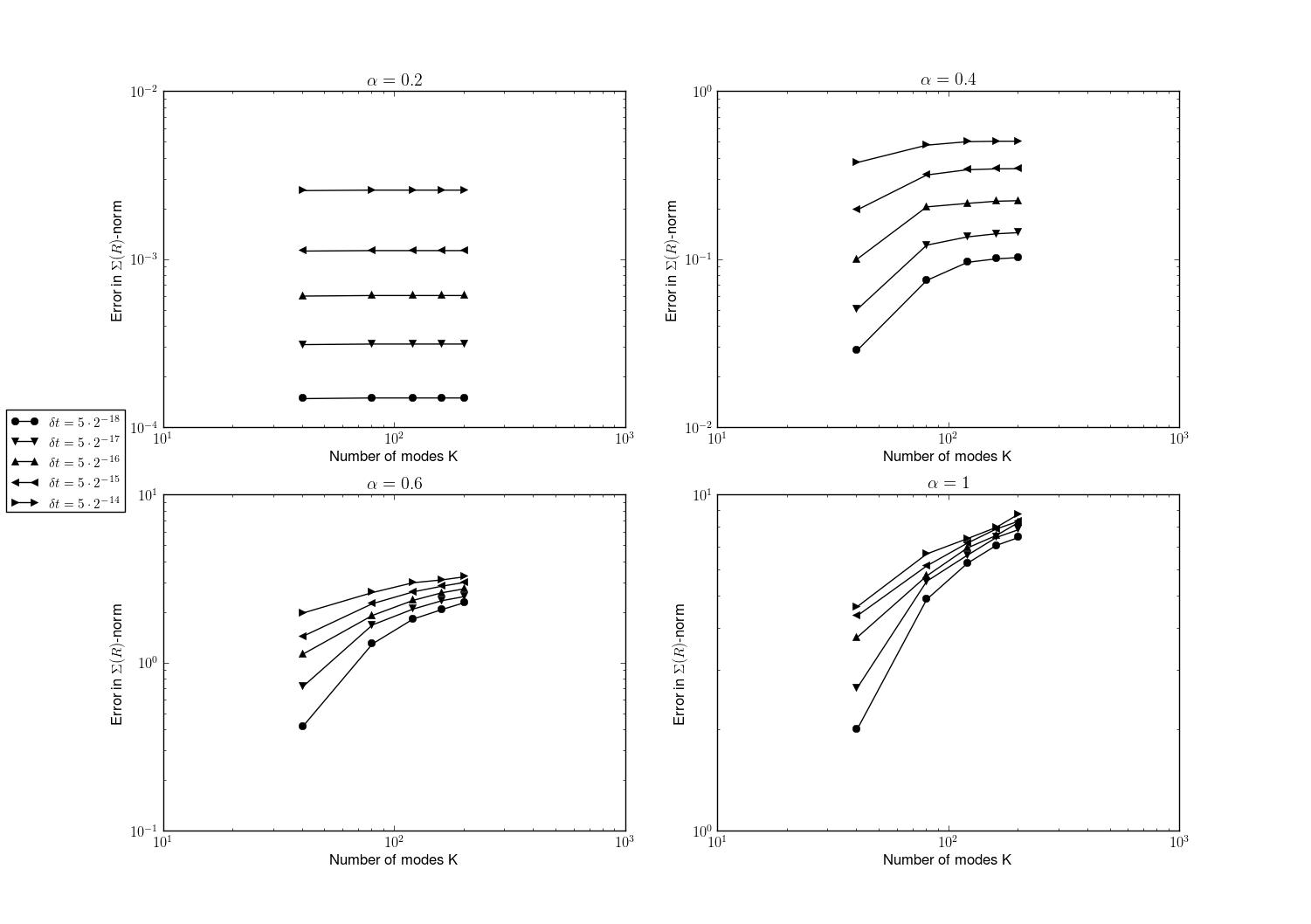}
    \caption{Mean square difference in $\Sigma$ norm between
    the solution of the Crank-Nicolson and the splitting discretizations
    with respect to the number of modes, for various time steps at time $T=5$.}
    \label{fig:stabK}
\end{figure}
In order to highlight this phenomena, we plot in Figure \ref{fig:stabK} the
mean-square difference in $\Sigma$ norm between
the solution of the Crank-Nicolson and the splitting discretizations
with respect to the number of modes, for various time steps at time $T=5$.
We observe that this difference is increasing with the number of modes
for a given time step, and decreasing with respect to the time step for a given
number of modes.
We recover here the sufficient condition used in the theoretical analysis that
imposes to the number of modes to increase slowly enough when the time step
decreases.
We believe that this numerical behaviour comes from the fact that the matrices $C(\delta t,\chi^{n+1})$
and $D(\delta t,\chi^{n+1})$ respectively defined by \eqref{eq:defC} and \eqref{eq:defD}
have diagonal terms of order $1\pm i\sqrt{\delta t}K/2$.
For $K=200$ and $\delta t=5\cdot 2^{-18}$ (which are the values used in Figure
\ref{fig:stabK}), we obtain that $\sqrt{\delta t}K/2\approx 0.44$, which is
not much smaller that one.
Thus it is possible that the Crank-Nicolson approximation is not precise enough
in this regime for the highest modes (that is to say that $K$ is too large
with respect to $\delta t$).
We recall that the theoretical analysis imposes that $K\delta t^{1/4}$ stays bounded.
Practically, this assumption imposes that the matrices $C(\delta t,\chi^{n+1})$
and $D(\delta t,\chi^{n+1})$ tend to be diagonal when the time step vanishes.

\vspace{2mm}
\paragraph{\textbf{Acknowledgement :}}
This work was supported by a public grant as part of the Investissement d'avenir project,
reference ANR-11-LABX-0056-LMH, LabEx LMH.
The author is grateful to A. de Bouard for helpful discussions.

\section{Appendix}
\label{sec:appendix}

\begin{proof}[\textbf{Proof of Proposition \ref{prop:exist_Sk}}]
  We begin with showing local existence and uniqueness in $\Sigma^k(\R^d)$, $\forall k\in\N^*$, $k\geq 2$, assuming that $\phi_0\in\Sigma^k(\R^d)$. Let $T_0>0$.
We use equation \eqref{eq:GP} with the following gauge transformation $\phi(t,x)=e^{-iG(t,x)}\psi(t,x)$ with $G(t,x)=\frac{1}{2}\abs{x}^2(t+\epsilon W(t))$.
The equation becomes
  \begin{align}
    \label{eq:GPgaugenl}
    i\partial_t\psi=-(\nabla-ix(t+\epsilon W(t)))^2\psi+\lambda \abs{\psi}^2\psi.
  \end{align}
  We denote by $U^\omega(t,s)$ the propagator for the linear equation,
  \begin{align}
    \label{eq:GPgaugel}
    i\partial_t\psi=-(\nabla-ix(t+\epsilon W(t)))^2\psi.
  \end{align}
  We introduce $T_\omega$, a positive random constant introduced in Proposition 4 \cite{dbf12},
  such that Propositions 6, 7 and Lemma 4.1 in \cite{dbf12} holds.
  In order to show the local existence and uniqueness in $\Sigma^k(\R^d)$,
  we use a classical fixed point argument based on application $\mathcal{T}^\omega$ defined by:
\begin{align*}
  (\mathcal{T}^\omega \psi)(t)=U^\omega(t,0)\phi_0-i\lambda\displaystyle\int_0^t U^\omega(t,s)\abs{\psi}^2\psi(s)ds.
\end{align*}
 To define the domains, we suppose that $(r,4)$ is an admissible pair,
 \textit{i.e.} $r=8/d$, and we set $\theta=d/4$.
 We define the following spaces for $T\leq T_0\wedge T_\omega$ where $T_0$ is fixed.
\begin{align*}
  X_T&=L^\infty(I,L^2)\cap L^r(I,L^4),\\
  Y^k_T&=\{v\in X_T /\quad x^\alpha\partial^\beta v\in X_T, \abs{\alpha}+\abs{\beta}\leq k\},\\
  \widetilde{Y}^k_T&=\{v /\quad x^\alpha\partial^\beta v\in L^1(I,L^2)+ L^{r'}(I,L^{\frac{4}{3}}), \abs{\alpha}+\abs{\beta}\leq k\},
\end{align*}
where $I=[0,T]$, and where $\alpha$ and $\beta$ are two multi-indices.
We now show that there exist $C_1(T_0,k,\omega)>0$, $C_2(T_0,k,\omega)>0$ and $C_3(T_0,k,\omega)>0$ so that for all $\psi_1,\psi_2\in Y^k_T$:
\begin{align}
  \norm{Y^k_T}{\mathcal{T}^\omega \psi_1}&\leq C_1 \normSigma{k}{\phi_0}+C_2T^{1-\theta}\norm{Y^k_T}{\psi_1}^3\label{eq:Tstable}\\
  \norm{L^r(I;L^4)}{\mathcal{T}^\omega \psi_1-\mathcal{T}^\omega \psi_2}&\leq C_3T^{1-\theta}(\norm{Y^k_T}{\psi_1}^2+\norm{Y^k_T}{\psi_2}^2)\norm{L^r(I;L^4)}{\psi_1-\psi_2}\label{eq:Tcontraction},
\end{align}
To prove this, we use the following lemmas:
\begin{lem}
  \label{lem:bound_NL}
  Let $k\in\N^*$, and assume $d\leq 3$.
  Then there exists $C(k)>0$ such that for all $\phi\in\Sigma^{k,4}(\R^d)\cap\Sigma^{k,2}(\R^d)$,
  \begin{align}
    \label{eq:bound_NL1}
    \normSigma{k,\frac{4}{3}}{\abs{\phi}^2\phi}&\leq C(k)\normSigma{k,4}{\phi}\normSigma{k,2}{\phi}^2,
  \end{align}
  and for all $\phi\in\Sigma^{k+1}(\R^d)$,
  \begin{align}
    \label{eq:bound_NL2}
    \normSigma{k+1}{\abs{\phi}^2\phi}&\leq C(k)\normSigma{k,4}{\phi}^2\normSigma{k+1}{\phi}.
  \end{align}
\end{lem}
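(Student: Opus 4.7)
The plan is to apply the generalized Leibniz rule followed by H\"older's inequality and classical Sobolev embeddings in dimension $d\leq 3$. For multi-indices $\alpha,\beta$ with $|\alpha|+|\beta|\leq k$ (first estimate) or $\leq k+1$ (second estimate), I would expand $x^\beta\partial^\alpha(\abs{\phi}^2\phi)$ as a finite sum of products of the form $(x^{\beta_1}\partial^{\alpha_1}\phi)(x^{\beta_2}\partial^{\alpha_2}\overline{\phi})(x^{\beta_3}\partial^{\alpha_3}\phi)$ with $\alpha=\alpha_1+\alpha_2+\alpha_3$ and $\beta=\beta_1+\beta_2+\beta_3$. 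After relabeling, I may assume $s_1\leq s_2\leq s_3$, where $s_i=|\alpha_i|+|\beta_i|$.

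For the first inequality, I would apply H\"older with exponents $(4,4,4)$ landing in $L^{4/3}$. The factor indexed by $s_3$ is bounded directly by $\normSigma{k,4}{\phi}$, while for $j=1,2$ I would use the Sobolev embedding $H^1(\R^d)\hookrightarrow L^4(\R^d)$, valid for $d\leq 3$, which yields $\normL{4}{x^{\beta_j}\partial^{\alpha_j}\phi}\leq C\normSigma{s_j+1}{\phi}$. The ordering $s_j\leq s_3$ combined with $s_1+s_2+s_3\leq k$ forces $s_j+1\leq k$ for $j=1,2$ (an elementary case analysis, including the small cases $k=1,2$), so these factors are controlled by $\normSigma{k}{\phi}$.

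For the second inequality, when $s_3\leq k$ I would apply H\"older with exponents $(\infty,4,4)$ landing in $L^2$, placing the smallest-index factor in $L^\infty$ via the Sobolev embedding $W^{1,4}(\R^d)\hookrightarrow L^\infty(\R^d)$ (valid for $d\leq 3$ since $p=4>d$). The constraint $s_1\leq (k+1)/3$, together with direct checks for $k=1,2$, yields $s_1+1\leq k$, hence $\normL{\infty}{x^{\beta_1}\partial^{\alpha_1}\phi}\leq C\normSigma{k,4}{\phi}$, and the full product is bounded by $C\normSigma{k,4}{\phi}^3$. One factor $\normSigma{k,4}{\phi}$ is then absorbed via the embedding $\Sigma^{k+1}\hookrightarrow \Sigma^{k,4}$ (itself a consequence of $H^1\hookrightarrow L^4$) to reach $\normSigma{k,4}{\phi}^2\normSigma{k+1}{\phi}$. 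The remaining edge case $s_3=k+1$ forces $s_1=s_2=0$ and is handled instead by H\"older with exponents $(\infty,\infty,2)$ together with $\normL{\infty}{\phi}\leq C\normSigma{1,4}{\phi}\leq C\normSigma{k,4}{\phi}$.

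The main technical obstacle is the careful index bookkeeping: one must ensure that every invocation of a Sobolev embedding costs only as many derivatives as are available in the relevant $\Sigma^{k,p}$ space. The generic regime $k\geq 3$ is immediate, but the corner cases $k=1,2$ require direct verification on the handful of admissible index triples $(s_1,s_2,s_3)$. The restriction $d\leq 3$ is essential precisely for the two embeddings $H^1\hookrightarrow L^4$ and $W^{1,4}\hookrightarrow L^\infty$ on which the whole argument rests.
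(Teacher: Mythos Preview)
Your argument is correct and follows the same line as the paper's proof: Leibniz expansion of $x^\beta\partial^\alpha(|\phi|^2\phi)$, H\"older's inequality, and the Sobolev embeddings $H^1\hookrightarrow L^4$ and $W^{1,4}\hookrightarrow L^\infty$ valid for $d\leq 3$. The only cosmetic difference is that the paper keeps the full monomial weight on the highest-derivative factor rather than splitting it, and for \eqref{eq:bound_NL2} the paper gives no details at all beyond ``Sobolev embedding theorems and H\"older's inequality'', so your treatment is in fact more complete.
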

\begin{lem}
  \label{lem:ULk}
  Let $k\in\N^*$. Then, for all $\phi_0\in\Sigma^k(\R^d)$, $t\mapsto U^\omega(t,0)\phi_0\in\mathcal{C}(I,\Sigma^k(\R^d))\cap Y^k_T$. Moreover,
  \begin{align*}
    \norm{Y^k_T}{U^\omega(\cdot,0)\phi_0}\leq C_1(T_0,k,\omega)\normSigma{k}{\phi_0}.
  \end{align*}
  For all $\phi\in Y^k_T$, $\abs{\phi}^2\phi\in \widetilde{Y}^k_T$ and
  \begin{align*}
    \norm{Y^k_T}{\Lambda_\omega(\cdot,0)\abs{\phi}^2\phi}\leq C_2(T_0,k,\omega)\norm{\widetilde{Y}^k_T}{\abs{\phi}^2\phi},
  \end{align*}
  where $\Lambda_\omega(\cdot,0) \abs{\phi}^2 \phi $ is defined for all $t\in I$ by,
  \begin{align*}
    \Lambda_\omega(t,0)=\int_0^t U^\omega(t,s) \abs{\phi_s}^2\phi_s ds.
  \end{align*}
\end{lem}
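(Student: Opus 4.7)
The approach is to promote the Strichartz-type estimates established in \cite{dbf12} for $U^\omega$ at the $L^2$- and $\Sigma$-levels up to the general space $\Sigma^k$ by commuting $x^\alpha\partial^\beta$ through the propagator and then re-applying Strichartz. First I would recall that, up to the random time $T_\omega$, Propositions 6, 7 and Lemma 4.1 of \cite{dbf12} yield three building blocks: (i) the homogeneous Strichartz bound $\norm{L^\infty(I,L^2)\cap L^r(I,L^4)}{U^\omega(\cdot,0)\phi_0}\leq C(T_0,\omega)\normL{2}{\phi_0}$; (ii) the inhomogeneous Strichartz bound $\norm{L^\infty(I,L^2)\cap L^r(I,L^4)}{\Lambda_\omega f}\leq C(T_0,\omega)\norm{L^1(I,L^2)+L^{r'}(I,L^{4/3})}{f}$; and (iii) a commutation algebra according to which each commutator of $U^\omega(t,s)$ with $\nabla$ or $x$ produces lower-order differential operators with coefficients polynomial in $t$ and in $\alpha(t):=t+\epsilon W(t)$.

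Next I would run an induction on $k$. For a multi-index $(\alpha,\beta)$ with $|\alpha|+|\beta|\leq k$, iterating (iii) provides a decomposition of the form
\[
x^\alpha\partial^\beta U^\omega(t,0) \;=\; U^\omega(t,0)\,M^{(\alpha,\beta)}_\omega(t) \;+\; \int_0^t U^\omega(t,s)\,R^{(\alpha,\beta)}_\omega(s)\,ds,
\]
where $M^{(\alpha,\beta)}_\omega(t)$ is a polynomial in $x$ and $\nabla$ of total order $|\alpha|+|\beta|$ with $(t,\omega)$-dependent coefficients, and $R^{(\alpha,\beta)}_\omega(s)$ is an analogous operator of strictly lower order. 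Applying (i) to the first summand and (ii) to the Duhamel remainder reduces the $Y^k_T$-norm of $U^\omega(\cdot,0)\phi_0$ to a finite sum of $L^2$-norms of $x^\gamma\partial^\delta\phi_0$ with $|\gamma|+|\delta|\leq k$, i.e. to $\normSigma{k}{\phi_0}$. Continuity $t\mapsto U^\omega(t,0)\phi_0\in\mathcal{C}(I,\Sigma^k)$ then follows by a standard density argument: approximate $\phi_0$ by Schwartz data, for which continuity is classical since the linear equation can be differentiated pointwise, and pass to the limit using the uniform $\Sigma^k$-bound just obtained.

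The second assertion is handled by the same commutator algebra applied now to $\Lambda_\omega f$: each $x^\alpha\partial^\beta \Lambda_\omega f$ decomposes into a finite sum of terms of the form $\Lambda_\omega(P^{\gamma,\delta}_\omega f)$ with $|\gamma|+|\delta|\leq|\alpha|+|\beta|$, plus lower-order Duhamel remainders. Applying (ii) termwise controls the $Y^k_T$-norm of $\Lambda_\omega f$ by $\sum_{|\gamma|+|\delta|\leq k}\norm{L^1(I,L^2)+L^{r'}(I,L^{4/3})}{x^\gamma\partial^\delta f}$, which is precisely $\norm{\widetilde{Y}^k_T}{f}$ after absorbing the polynomial coefficients in $C_2(T_0,k,\omega)$. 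Specializing to $f=|\phi|^2\phi$ and combining with Lemma \ref{lem:bound_NL} will then give the bound required in the fixed-point step.

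The main obstacle is the combinatorial bookkeeping in the commutator expansion: each commutation of $\nabla$ or $x$ with $U^\omega$ generates a factor of $\alpha(t)=t+\epsilon W(t)$, so after $k$ commutations one must control $\sup_{t\leq T}|\alpha(t)|^k$ almost surely. This is exactly the purpose of the stopping time $T_\omega$ from \cite{dbf12}: on $\{t\leq T_\omega\}$ all such polynomial expressions in $\alpha(t)$ are a.s.\ finite, which in turn makes the constants $C_1(T_0,k,\omega)$ and $C_2(T_0,k,\omega)$ a.s.\ finite. Once this algebra is set up, the remainder of the proof reduces to routine applications of Strichartz estimates and an approximation argument.
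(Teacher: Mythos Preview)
Your proposal is correct and follows essentially the same approach as the paper: iterate the commutation relations from Proposition~6 of \cite{dbf12} to move $x^\alpha\partial^\beta$ through $U^\omega$, then apply the Strichartz estimates of Proposition~7. The paper's version is slightly more streamlined in that the commutation is used pointwise in $t$, yielding $\norm{X_T}{x^\alpha\partial^\beta U^\omega(\cdot,0)\phi_0}\leq C(k,T_0)\sum_{|\gamma|+|\delta|\leq k}\norm{X_T}{U^\omega(\cdot,0)x^\gamma\partial^\delta\phi_0}$ directly, so no Duhamel remainder term is needed for the homogeneous bound; otherwise the arguments coincide.
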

\begin{proof}[Proof of lemma \ref{lem:bound_NL}]
  Equation \eqref{eq:bound_NL2} can be shown using Sobolev embedding theorems and Holder's
  inequality.
  To show equation \eqref{eq:bound_NL1}, we use the fact that there exists $C(k)>0$
  such that for all multi-indices $\alpha$ and $\beta$ such that $\abs{\alpha}+\abs{\beta}\leq k$,
  \begin{align*}
    \abs{x^\alpha\partial^\beta(\abs{\phi}^2\phi)}\leq C(k)\displaystyle\sum_{\abs{l}\leq \abs{\beta},\abs{j}\leq k-1}\abs{x^\alpha\partial^l\phi}\abs{\partial^j\phi}^2,
  \end{align*}
  which leads, using H\"{o}lder's inequality and the Sobolev embedding $\Sigma^{k}(\R^d)\hookrightarrow H^k(\R^d)\hookrightarrow W^{k-1,4}(\R^d)$, to
  \begin{align*}
    \normL{4/3}{x^\alpha\partial^\beta(\abs{\phi}^2\phi)}\leq C(k)\norm{\Sigma^{k,4}}{\phi}\norm{\Sigma^{k,2}}{\phi}^2.
  \end{align*}
\end{proof}
\begin{proof}[Proof of lemma \ref{lem:ULk}]
  By using $k$ times the Proposition 6 of \cite{dbf12}, we get that
  for all multi-index $\alpha,\beta$ such that $\abs{\alpha}+\abs{\beta}\leq k$,
  \begin{align*}
    \norm{X_T}{x^\alpha\partial^\beta U^\omega(\cdot,0)\phi_0}\leq C(k,T_0)\displaystyle\sum_{\abs{\gamma}+\abs{\delta}\leq k}\norm{X_T}{U^\omega(\cdot,0)x^\gamma\partial^\delta \phi_0},
  \end{align*}
  and using Proposition 7 \cite{dbf12} that relies on the fact that $U^\omega$
  in an isometry in $L^2(\R^d)$ it comes,
  \begin{align*}
    \norm{X_T}{x^\alpha\partial^\beta U^\omega(\cdot,0)\phi_0}\leq C(k,T_0)\displaystyle\sum_{\abs{\gamma}+\abs{\delta}\leq k}\normL{2}{x^\gamma\partial^\delta \phi_0},
  \end{align*}
which proves the first point. The second one can be shown in a similar way.
\end{proof}
To show \eqref{eq:Tstable}, Lemma \ref{lem:bound_NL} and H\"{o}lder's inequality give
\begin{align*}
  \norm{\widetilde{Y}^k_T}{\abs{\phi}^2\phi}&\leq C(k)T^{1-\theta}\norm{L^{r}(I;\Sigma^{k,4})}{\phi}\norm{L^\infty(I;\Sigma^{k,2})}{\phi}^2\\
  \norm{\widetilde{Y}^k_T}{\abs{\phi}^2\phi}&\leq C(k)T^{1-\theta}\norm{Y^k_T}{\phi}^3
\end{align*}
To show \eqref{eq:Tcontraction}, we use the fact that there exists $C>0$ so that for all $u,v\in\C$,
\begin{align*}
  \abs{\abs{u}^2u-\abs{v}^2v}\leq C (\abs{u}^2+\abs{v}^2)\abs{u-v}
\end{align*}
 Using Strichartz estimates given in Proposition 7 \cite{dbf12} and H\"{o}lder inequality,
 \begin{align*}
   \norm{L^r(I;L^4)}{\mathcal{T}^\omega\psi-\mathcal{T}^\omega\phi}&\leq C(\omega,T_0)\norm{L^{r'}(I;L^{4/3})}{\abs{\psi}^2\psi-\abs{\phi}^2\phi}\\
   &\leq C(\omega,T_0)(\norm{L^{\infty}(I;L^{4})}{\psi}^2+\norm{L^{\infty}(I;L^{4})}{\phi}^2)\norm{L^{r'}(I;L^4)}{\psi-\phi}\\
   &\leq T^{1-\theta}C(\omega,T_0)(\norm{Y^k_T}{\psi}^2+\norm{Y^k_T}{\phi}^2)\norm{L^{r}(I;L^4)}{\psi-\phi}
 \end{align*}
Then $T$ can be chosen small enough so that $\mathcal{T}^\omega$ is a contraction in
\begin{align*}
  B_M:=\left\{\phi\in Y^k_T /\quad\norm{Y^k_T}{\phi}\leq M \right\}
\end{align*}
with $M=2C(T_0,k,\omega)\normSigma{k}{\phi_0}$.

We can show now by induction that this local solution is actually global in time.
Let $m\in\N$ such that $2\leq m<k$.
Since we know that the solution is global in the case $m=2$
(see Proposition 4 \cite{Bouard20152793}), we are going to show that
if the solution is global in $\Sigma^m$, then it is global in $\Sigma^{m+1}$.
To prove it, we begin with showing that it is global in $\Sigma^{m,4}(\R^d)$.
It can be shown by induction using proposition 6 \cite{dbf12} for $\phi\in Y^k_T$
and all multi-index $\alpha,\beta$ so that $\abs{\alpha}+\abs{\beta}\leq m$ and $\forall (s,t)\in [0,T]^2$, $s<t$:
\begin{align*}
  \normL{4}{x^\alpha\partial^\beta U^\omega(t,s)\phi(s)}\leq C(\omega,T_0,k)\left((t-s)^m\vee 1 \right)\displaystyle\sum_{\abs{\delta}+\abs{\gamma}\leq m}\normL{4}{U^\omega(t,s)x^\delta\partial^\gamma \phi(s)}
\end{align*}
Then, using lemma 4.1 \cite{dbf12},
\begin{equation}
  \label{eq:xdu}
  \begin{aligned}
    \normL{4}{x^\alpha\partial^\beta\displaystyle\int_0^t U^\omega(t,s)\abs{\phi(s)}^2\phi(s)ds}&\leq C(\omega,T_0,k)\displaystyle\int_0^t\left((t-s)^m\vee 1 \right)
    \displaystyle\sum_{\abs{\delta}+\abs{\gamma}\leq m}\abs{t-s}^{-d/4}\normL{4/3}{x^\delta\partial^\gamma \abs{\phi(s)}^2\phi(s)}ds,\\
    \normL{4}{x^\alpha\partial^\beta\displaystyle\int_0^t U^\omega(t,s)\abs{\phi(s)}^2\phi(s)ds}&\leq C(\omega,T_0,k)\displaystyle\int_0^t\left((t-s)^m\vee 1 \right)\abs{t-s}^{-d/4}\norm{\Sigma^{m,4/3}}{\abs{\phi(s)}^2\phi(s)}ds.
  \end{aligned}
\end{equation}
We notice that $\abs{t-s}^{-d/4}$ is integrable in $s$ since $d\leq 3$.
Lemma \ref{lem:bound_NL} and Gronwall's inequality enables us to conclude that the local solution is
actually global in $\Sigma^{m,4}(\R^d)$.

We show now that the local solution is actually global in $\Sigma^{m+1}(\R^d)$ using again the mild formulation:
\begin{align*}
  \normSigma{m+1}{\psi(t)}\leq \normSigma{m+1}{U^\omega(t,0)\phi_0}+\normSigma{m+1}{\Lambda^\omega(t,0)(\abs{\psi}^2\psi)}.
\end{align*}
Using proposition 7 \cite{dbf12}, and reasoning in a similar way than previously, we can show:
\begin{align*}
  \normSigma{m+1}{\psi(t)}&\leq C(\omega,T_0,k)\normSigma{m+1}{\phi_0}+C(\omega,T_0,k)\displaystyle\int_0^t \normSigma{m+1}{\abs{\psi(s)}^2\psi(s)}ds.
\end{align*}
We now use Lemma \ref{lem:bound_NL} and Gronwall's inequality to conclude that
the solution is global in $\Sigma^{m+1}$.
\end{proof}
\bibliographystyle{amsplain}
\bibliography{bibliographie}

\providecommand{\bysame}{\leavevmode\hbox to3em{\hrulefill}\thinspace}
\providecommand{\MR}{\relax\ifhmode\unskip\space\fi MR }
\providecommand{\MRhref}[2]{%
  \href{http://www.ams.org/mathscinet-getitem?mr=#1}{#2}
}
\providecommand{\href}[2]{#2}
\begin{thebibliography}{10}

\bibitem{abdu}
F.Kh. Abdullaev, B.B. Baizakov, and V.V. Konotop, \emph{Dynamics of a
  {B}ose-{E}instein {C}ondensate in {O}ptical {T}rap}, Nonlinearity and
  Disorder: Theory and Applications (Fatkhulla Abdullaev, Ole Bang, and
  MadsPeter Sørensen, eds.), NATO Science Series, vol.~45, Springer
  Netherlands, 2001, pp.~69--78 (English).

\bibitem{AntoineBaoBesse2013}
Xavier Antoine, Weizhu Bao, and Christophe Besse, \emph{Computational methods
  for the dynamics of the nonlinear {S}chrödinger/{G}ross–{P}itaevskii
  equations}, Computer Physics Communications \textbf{184} (2013), no.~12, 2621
  -- 2633.

\bibitem{Antoine20132621}
\bysame, \emph{Computational methods for the dynamics of the nonlinear
  {S}chrödinger/{G}ross–{P}itaevskii equations}, Computer Physics
  Communications \textbf{184} (2013), no.~12, 2621 -- 2633.

\bibitem{Antoine201595}
Xavier Antoine and Romain Duboscq, \emph{Gpelab, a {Matlab} toolbox to solve
  {G}ross–{P}itaevskii equations ii: {D}ynamics and stochastic simulations},
  Computer Physics Communications \textbf{193} (2015), 95 -- 117.

\bibitem{BaoJakschMarkowich2003}
Weizhu Bao, Dieter Jaksch, and Peter~A. Markowich, \emph{Numerical solution of
  the {G}ross–{P}itaevskii equation for {B}ose–{E}instein condensation},
  Journal of Computational Physics \textbf{187} (2003), no.~1, 318 -- 342.

\bibitem{Bao2003318}
\bysame, \emph{Numerical solution of the {G}ross–{P}itaevskii equation for
  {B}ose–{E}instein condensation}, Journal of Computational Physics
  \textbf{187} (2003), no.~1, 318 -- 342.

\bibitem{bdbd13}
Radoin Belaouar, Anne de~Bouard, and Arnaud Debussche, \emph{Numerical analysis
  of the nonlinear {S}chrödinger equation with white noise dispersion},
  Stochastic Partial Differential Equations: Analysis and Computations
  \textbf{3} (2015), no.~1, 103--132 (English).

\bibitem{Chen2015}
Chuchu Chen, Jialin Hong, and Andreas Prohl, \emph{Convergence of a \texttheta
  -scheme to solve the stochastic nonlinear {S}chrödinger equation with
  {S}tratonovich noise}, Stochastic Partial Differential Equations: Analysis
  and Computations (2015), 1--45.

\bibitem{dbd04}
A.~De~Bouard and A.~Debussche, \emph{A semi-discrete scheme for the stochastic
  nonlinear {S}chrödinger equation}, Numerische Mathematik \textbf{96} (2004),
  no.~4, 733--770 (English).

\bibitem{deBouardFukuizumi2007}
Anne de~Bouard and Reika Fukuizumi, \emph{Stochastic fluctuations in the
  {G}ross–{P}itaevskii equation}, Nonlinearity \textbf{20} (2007), no.~12,
  2823.

\bibitem{dbf12}
\bysame, \emph{Representation formula for stochastic {S}chr{\"o}dinger
  evolution equations and applications}, Nonlinearity \textbf{25} (2012),
  no.~11, 2993.

\bibitem{Bouard20152793}
Anne de~Bouard, Reika Fukuizumi, and Romain Poncet, \emph{Vortex solutions in
  {B}ose-{E}instein condensation under a trapping potential varying randomly in
  time}, Discrete and Continuous Dynamical Systems - Series B \textbf{20}
  (2015), no.~9, 2793--2817.

\bibitem{Delfour1981277}
M~Delfour, M~Fortin, and G~Payr, \emph{Finite-difference solutions of a
  non-linear {S}chrödinger equation}, Journal of Computational Physics
  \textbf{44} (1981), no.~2, 277 -- 288.

\bibitem{Driscoll:1997:FDP}
J.~R. Driscoll, D.~M. Healy, Jr., and D.~N. Rockmore, \emph{{F}ast {D}iscrete
  {P}olynomial {T}ransforms with applications to data analysis for distance
  transitive graphs}, SIAM J. Comput. \textbf{26} (1997), no.~4, 1066--1099.

\bibitem{DuboscqMartySplit}
R.~Duboscq and R.~Marty, \emph{Analysis of a time-splitting scheme for a class
  of random nonlinear partial differential equations}, preprint.

\bibitem{PhysRevA.69.053607}
J.~Garnier, F.~Kh. Abdullaev, and B.~B. Baizakov, \emph{Collapse of a
  {B}ose-{E}instein condensate induced by fluctuations of the laser intensity},
  Phys. Rev. A \textbf{69} (2004), 053607.

\bibitem{Gazeau14}
Maxime Gazeau, \emph{Probability and pathwise order of convergence of a
  semidiscrete scheme for the stochastic {M}anakov equation}, SIAM J. Numer.
  Anal. \textbf{52} (2014), no.~1, 533--553. \MR{3166967}

\bibitem{Leibon:2008:FHT}
Gregory Leibon, Daniel~N. Rockmore, Wooram Park, Robert Taintor, and Gregory~S.
  Chirikjian, \emph{A {F}ast {H}ermite {T}ransform}, Theor. Comput. Sci.
  \textbf{409} (2008), no.~2, 211--228.

\bibitem{LiuSplit}
Jie Liu, \emph{Order of convergence of splitting schemes for both deterministic
  and stochastic nonlinear {S}chr\"odinger equations}, SIAM J. Numer. Anal.
  \textbf{51} (2013), no.~4, 1911--1932. \MR{3072234}

\bibitem{Mastroianni1994}
G.~Mastroianni and G.~Monegato, \emph{Error estimates for gauss-laguerre and
  gauss-hermite quadrature formulas}, pp.~421--434, Birkh{\"a}user Boston,
  Boston, MA, 1994.

\bibitem{Potts:1998:FAD}
Daniel Potts, Gabriele Steidl, and Manfred Tasche, \emph{Fast algorithms for
  {D}iscrete {P}olynomial {T}ransforms}, Math. Comput. \textbf{67} (1998),
  no.~224, 1577--1590.

\bibitem{reed1975fourier}
Michael Reed and Barry Simon, \emph{Fourier analysis, self-adjointness, volume
  2 of methods of modern mathematical physics}, Academic Press, New York, 1975.

\end{thebibliography}

\end{document}